\newtheorem{lem}{Lemma}[section] 
\newtheorem{prp}[lem]{Proposition}
\newtheorem{thm}[lem]{Theorem} 
\newtheorem{crl}[lem]{Corollary} 
\newtheorem{defi}[lem]{Definition} 
\newtheorem{nota}[lem]{Notation}
\newtheorem{rque}[lem]{Remark}
\newtheorem{exe}[lem]{Example}
\numberwithin{equation}{section}
\newcommand\N{\mathbb{N}}
\newcommand\Z{\mathbb{Z}}
\newcommand\Pol{\text{Pol}}
\DeclareMathOperator{\Irr}{Irr}
\DeclareMathOperator{\Tr}{Tr}
\DeclareMathOperator{\id}{id}
\DeclareMathOperator{\Rep}{Rep}
\DeclareMathOperator{\Hom}{Hom}
\title[FREE WREATH PRODUCT BY QUANTUM PERMUTATION GROUPS]{\textbf{Free wreath product
quantum groups: the monoidal category, approximation properties and free probability}}
\author{Fran\c cois Lemeux and Pierre Tarrago}
\curraddr{}
\thanks{}
\begin{document}
\maketitle

\begin{abstract}
In this paper, we find the fusion rules for the free wreath product quantum groups $\mathbb{G}\wr_*S_N^+$ for all compact matrix quantum groups of Kac type $\mathbb{G}$ and $N\ge4$. This is based on a combinatorial description of the intertwiner spaces between certain generating representations of $\mathbb{G}\wr_*S_N^+$. The combinatorial properties of the intertwiner spaces in $\mathbb{G}\wr_*S_N^+$ then allows us to obtain several probabilistic applications. We then prove the monoidal equivalence between $\mathbb{G}\wr_*S_N^+$ and a compact quantum group whose dual is a discrete quantum subgroup of the free product $\widehat{\mathbb{G}}*\widehat{SU_q(2)}$, for some $0<q\le1$. We obtain as a corollary certain stability results for the operator algebras associated with the free wreath products of quantum groups such as Haagerup property, weak amenability and exactness.
\end{abstract}

\section*{Introduction}
The concept of compact pseudogroup was stated in 1987 by Woronowicz in \cite{woronowicz1987compact}, in an attempt to transform the abstract notion of group structure on noncommutative spaces in a more tractable theory. In particular, it encompasses in a unique framework the generalized Pontrjagin duality introduced by Kac in \cite{VK74} and the non-trivial deformations of a compact Lie group as constructed by Drinfeld and Jimbo (see e.g (\cite{DJ86}). This formalisation, later called compact quantum group, allowed  to find new concrete examples of these noncommutative structures. Woronowicz defined the notion of compact quantum group $\mathbb{G}$ as a $C^{*}$-algebra $C(\mathbb{G})$ endowed with a $*$-homomorphism $\Delta:C(\mathbb{G})\rightarrow C(\mathbb{G})\otimes_{\min} C(\mathbb{G})$ and some additional properties. The main point is that if we consider the commutative image of $C(\mathbb{G})$, we get through the Gelfand theorem the algebra of continuous functions on a classical group.

Besides the original example $SU_{q}(2)$ studied by Woronowicz, many new compact quantum groups were introduced by Wang in \cite{Wang}: namely the compact quantum groups $O_{N}^{+},U_{N}^{+}$ and $S_{N}^{+}$ were defined as ``free counterparts" of the known matrix groups $O_{N},U_{N}$ and $S_{N}$. These examples were a first step for the construction of many new compact quantum groups, either by algebraic constructions (free product introduced by Wang in \cite{Wang2}, free wreath product introduced by Bichon in \cite{Bic04}) or by generalization of the combinatoric involved in the description of certain already known compact quantum group (in particular the easy quantum groups of Banica and Speicher in \cite{BS09}).

In \cite{Wor88} Woronowicz described the Tannaka-Krein duality for a compact quantum group, a fundamental tool to manipulate more effectively these objects and to study their properties: this duality associates to a compact quantum group, a certain $C^{*}-$tensor category coming from its corepresentations. Banica and latter Banica and Speicher discovered that these tensor categories have, in some cases, nice combinatoric descriptions. These descriptions are a very efficient way to recover many algebraic properties of a compact quantum group. For instance, Banica used it in \cite{Ban96}, \cite{MR1484551} and \cite{Ban99} to characterize the irreducible representations and the fusion rules for the quantum groups described in Wang thesis.

One of the main consequences of the definition of a compact quantum group is the existence of a Haar state of the $C^{*}-$algebra $C(\mathbb{G})$ of the quantum group $\mathbb{G}$. This allows to consider $C(\mathbb{G})$ as a noncommutative probability space, and in this setting many results were obtained on the stochastic behavior of some elements of $C(\mathbb{G})$. Once again the main strategy to get some results on the stochastic level is to restrict to the associated $C^{*}$-tensor category through the Tannak-Krein duality. See Banica and Collins \cite{banica2005integration},\cite{banica2007integration}, Banica and Speicher in \cite{BS09}, Banica, Curran and Speicher in \cite{banica2011stochastic}, Kostler and Speicher in \cite{kostler2009noncommutative} for more information on the subject.

Another interesting field of investigation for compact quantum group is the study of the operator algebraic properties of the underlying algebras. One can indeed associate to a compact quantum group $\mathbb{G}$, a universal $C^*$-algebra $C_u(\mathbb{G})$ and a reduced $C^*$-algebra $C_r(\mathbb{G})$, and also a von Neumann algebra $L^{\infty}(\mathbb{G})$. One can wonder which properties are satisfied by these algebras. Banica started these studies in \cite{MR1484551} by proving the simplicity of $C_r(U_N^+)$ by adapting certain argument by Powers for classical free groups. Vergnioux proved in \cite{Ver05O} the property of Akemann-Ostrand for $L^{\infty}(U_N^+)$ and $L^{\infty}(O_N^+)$ and together with Vaes proved the factoriality, fullness and exactess for $L^{\infty}(O_N^+)$ in \cite{VV07}. More recently, in \cite{Bra11} and \cite{Bra12}, Brannan proved the Haagerup property for $L^{\infty}(O_N^+)$, $L^{\infty}(U_N^+)$ and $L^{\infty}(S_N^+)$. Freslon proved the weak-amenability of $L^{\infty}(O_N^+)$, $L^{\infty}(U_N^+)$ in \cite{Fre13} and together with De Commer and Yamashita proved the weak amenability for $L^{\infty}(S_N^+)$ in \cite{CFY13} via a monoidal equivalence argument and by the study of $L^{\infty}(SU_q(2))$. In each of these results, the knowledge of the fusion rules of the compact quantum groups is a crucial tool to prove the properties of the associated reduced $C^*$-algebras and von Neumann algebras.

In this article, we will mainly consider the case of the free wreath product quantum groups defined by Bichon in \cite{Bic04}. It was introduced as the most natural construction to study the free symmetry group of several copies of a same graph. The free wreath product $\wr_{*}$ associates to a compact quantum group $\mathbb{G}$ and a compact subgroup $\mathbb{F}$ of $S_{N}^{+}$ a new compact quantum group $\mathbb{G}\wr_{*}\mathbb{F}$. It is constructed as an analogue of the wreath products of classical groups. An example of this construction was studied by Banica and Vergnioux in \cite{BV09}, and then by Banica, Belinschi, Capitaine and Collins in \cite{BBCC11}: they focused on the free wreath product of the dual of the cyclic group $\mathbb{Z}/s\mathbb{Z}$ with $S_{N}^{+}$. Banica and Vergnioux obtained the fusion rules and Banica, Belinschi, Capitaine and Collins obtained interesting probability results involving free compound poisson variables.

Then the first author generalized these results in \cite{Lem13b} to the case of a free wreath product between the dual $\hat{\Gamma}$ of a discrete groupe $\Gamma$ and $S_{N}^{+}$. Once again he was able to find the fusion rules of the quantum group as well as some operator algebraic properties by using certain results of Brannan on $S_{N}^{+}$ (see \cite{Bra11}).

In this article, we tackle the general problem of the free wreath product between any compact quantum  group of Kac type $\mathbb{G}$ and $S_{N}^{+}$. In particular we give positive answers to the following questions:
\begin{itemize}
\item If one knows the intertwiners space of $\mathbb{G}$, can one describe the intertwiners spaces of $\mathbb{G}\wr_{*}S_{N}^{+}$ ? (See Theorem \ref{bigthm}).
\item Is the fusion semi-ring of $\mathbb{G}\wr_{*}S_{N}^{+}$ a free fusion semi-ring in the sense of \cite{BV09} ? (See Theorem \ref{nonalter}).
\item If the dual of $\mathbb{G}$ has the Haagerup property (resp. is exact, resp. is weakly amenable, resp. has the ACPAP property), does $\mathbb{G}\wr_{*}S_{N}^{+}$ possess the Haagerup property (resp. is exact, resp. is weakly amenable, resp. has the ACPAP property) ? (See Section \ref{secalgop} where we answer these questions and where we also study the converse implications).
\item Is it possible to express the haar state of $\mathbb{G}\wr_{*}S_{N}^{+}$ starting form the haar state of $S_{N}^{+}$ ? (See Subsection \ref{Weingarten}).
\end{itemize}
As a consequence of certain of these results, we also answer to:
\begin{itemize}
\item For a compact quantum subgroup $\mathbb{G}$ of $S_{N}^{+}$ with a fundamental corepresentation $r$, let us denote $\chi_{r}$ the character of this corepresentation. The following question was raised by Banica and Bichon in \cite{banica2007free}:

\textit{On which conditions on $(\mathbb{A},u)$ and $(\mathbb{B},v)$, two quantum subgroups of $S_{N}^{+}$, do we have the equality in law
$$\chi_{u\wr_{*}v}\sim \chi_{u}\boxtimes\chi_{v}$$
with $\boxtimes$ denoting the free multiplicative convolution between two noncommutating variables} ? 

We were able to show that the answer to this question is positive in the case where $\mathbb{B}=S_{N}^{+}$ (See Subsection \ref{char}).
\item $SU_{q}(2)$ is the first example of compact quantum group studied by Woronowicz in \cite{woronowicz1987compact}. The description of the interwiners spaces of $\mathbb{G}\wr_{*}S_{N}^{+}$ yields the folloing result:

\textit{$\mathbb{G}\wr_{*}S_{N}^{+}$ is monoidally equivalent to $\mathbb{H}$, where $\hat{\mathbb{H}}$ a quantum subgroup of $\widehat{\mathbb{G}}\ast\widehat{SU_{q}(2)}$}, (see Theorem \ref{resmono}).

Moreover we provide an explicit description of $\mathbb{H}$. This result implies all the operator algebraic results of Section \ref{secalgop}.
\end{itemize}
Using the description of the intertwiner spaces, Jonas Wahl also obtained in \cite{wahl} further interesting results on the reduced algebra and the von Neumann algebra of $\mathbb{G}\wr S_{N}^{+}$. In particular he obtained the simplicity of the reduced algebra and the unicity of the trace (which he identified with the Haar state), and proved that the von Neumann algebra is a $II_{1}-$factor without property $\Gamma$.\\
The paper is organised as follows : the first section is dedicated to some preliminaries and notations. The second section gives some classical results and proofs that provide an insight on the final general description for the intertwiners spaces of the free wreath products. The description of the intertwiners spaces for the free wreath products $\mathbb{G}\wr_*S_N^+$ is the main result of the third section. In the fourth section, we give the probabilistic applications of that one can deduce from this description. Then in the fifth section, we prove the monoidal equivalence result we mentioned above. This result is fundamental to obtain the operator algebraic consequences of the sixth section which we combined with the results in \cite{CFY13}. Finally in an appendix we provide a dimension formula for the corepresentations of $\mathbb{G}\wr_{*}S_{N}^{+}$.

\section{Preliminaries}
\subsection{Non-crossing partitions, diagrams. Tannaka-Krein duality}
In the following paragraph, we recall a few notions on non-crossing partitions, see e.g. \cite{BV09} for more informations. We also recall some facts on the categorical framework associated to compact quantum groups and non-crossing partitions.
\begin{defi}
We denote by $\mathcal{P}(k,l)$ (\/resp. $NC(k,l)$\/) the set of partitions (resp. non-crossing partitions) between $k$ upper points and $l$ lower points, that is the partitions (resp. non-crossing partitions) of the set $\{1,\dots,k+l\}$ with $k+l$ ordered elements from left to right on top and bottom, with the following pictorial representation:
\smallskip 
\[ \left\{ \begin{array}{ccccc}
.\ & . & . & .\ & .\\
& &  \mathscr{P} & & \\
. & . & . & . &   \\
\end{array} \right\} \]
with $k$-upper points, $l$-lower points and $\mathscr{P}$ is a a diagram composed of strings which connect certain upper and/or lower points (resp. which do not cross one another).
\end{defi}

\begin{nota} We will keep the following notation in the sequel:
\begin{enumerate}
\item[$\bullet$] We will denote by $1_k$ the one-block partition with only lower points which are all connected.
\item[$\bullet$] We will write $p\le q$ if $p$ is a refinement of $q$.
\end{enumerate}

\end{nota}

From now on, we only consider non-crossing partitions, even if certain of the following definitions and propositions hold for any (possibly crossing) partition. 

Non-crossing partitions give rise to new ones by tensor product, composition and involution:

\begin{defi}\label{NC} Let $p\in NC(k,l)$ and $q\in NC(l,m)$. Then, the tensor product, composition and involution of the partitions p,q are obtained by horizontal concatenation, vertical concatenation and symmetry with respect to upside-down turning:
$$p\otimes q=\left\{\mathscr{P}\mathscr{Q}\right\},\ 
pq=\setlength{\unitlength}{0.5cm}
\left\{\ \begin{picture}(1,1)\thicklines
\put(0,0.5){$\mathscr{Q}$}
\put(0,-0.6){$\mathscr{P}$}
\end{picture}\ \right\}-\{\emph{closed blocks, middle points}\},\ 
p^{*}=\{\mathscr{P^{\downarrow}}\}.$$
The operation $p\circ q=\setlength{\unitlength}{0.5cm}
\left\{\ \begin{picture}(1,1)\thicklines
\put(0,0.5){$\mathscr{Q}$}
\put(0,-0.6){$\mathscr{P}$}
\end{picture}\ \right\}$ is only defined if the number of lower points of $q$ is equal to the number of upper points of $p$. When one performs a composition $p\circ q$, and that one identifies the lower points of $p$ with the upper points of $q$, closed blocks might appear, that is strings which are connected neither to the new upper points nor to the new lower points. These blocks are discarded from the final pictorial representation denoted $pq$.
\end{defi}

\begin{exe}
Following the rules stated above (discarding closed blocks and following the lines when one identifies the upper points of $p$ with the lower ones of $q$), we get 
\setlength{\unitlength}{0.5cm}
$$\emph{if}\ \ \ 
p=\left\{\ \begin{picture}(3,3)\thicklines
\put(0,1.2){\line(0,1){0.75}}
\put(2,1.2){\line(0,1){0.75}}
\put(1,1.2){\line(0,1){0.75}}
\put(0,1.228){\line(1,0){1}}
\put(2,1.228){\line(1,0){1}}
\put(3,1.2){\line(0,1){0.75}}

\put(0,-0.878){\line(1,0){1}}
\put(0,-1.6){\line(0,1){0.75}}
\put(1,-1.6){\line(0,1){0.75}}
\put(2,-1.6){\line(0,1){0.75}}

\put(-0.2,2.2){1}
\put(0.8,2.2){2}
\put(1.8,2.2){3}
\put(2.8,2.2){4}
\put(-0.2,-2.2){1}
\put(0.8,-2.2){2}
\put(1.8,-2.2){3}
\end{picture}\ \right\} \ \ \ \emph{ and }\ \ \
q=\left\{\ \begin{picture}(4,3)\thicklines
\put(0,-1.3){\line(0,1){3.2}}
\put(2,1.2){\line(0,1){0.75}}
\put(4,1.2){\line(0,1){0.75}}
\put(1,-1.3){\line(0,1){3.2}}
\put(2,1.228){\line(1,0){1}}
\put(3,1.2){\line(0,1){0.75}}

\put(2,-1.6){\line(0,1){0.75}}
\put(3,-1.6){\line(0,1){0.75}}
\put(2,-0.878){\line(1,0){1}}

\put(-0.2,2.2){1}
\put(0.8,2.2){2}
\put(1.8,2.2){3}
\put(2.8,2.2){4}
\put(3.8,2.2){5}
\put(-0.2,-2.2){1}
\put(0.8,-2.2){2}
\put(1.8,-2.2){3}
\put(2.8,-2.2){4}
\end{picture}\ \right\}\ \ \ \emph{ then }\ \ \
pq=\left\{\ \begin{picture}(4,3)\thicklines
\put(0,1.2){\line(0,1){0.75}}
\put(2,1.2){\line(0,1){0.75}}
\put(1,1.2){\line(0,1){0.75}}
\put(0,1.228){\line(1,0){1}}
\put(2,1.228){\line(1,0){1}}
\put(3,1.2){\line(0,1){0.75}}
\put(4,1.2){\line(0,1){0.75}}

\put(0,-0.878){\line(1,0){1}}
\put(0,-1.6){\line(0,1){0.75}}
\put(1,-1.6){\line(0,1){0.75}}
\put(2,-1.6){\line(0,1){0.75}}

\put(-0.2,2.2){1}
\put(0.8,2.2){2}
\put(1.8,2.2){3}
\put(2.8,2.2){4}
\put(3.8,2.2){5}
\put(-0.2,-2.2){1}
\put(0.8,-2.2){2}
\put(1.8,-2.2){3}
\end{picture}\ \right\}.
$$
\end{exe}
\bigskip\noindent

From non-crossing partitions $p\in NC(k,l)$ naturally arise linear maps $T_p: \mathbb{C}^{N^{\otimes k}}\to \mathbb{C}^{N^{\otimes l}}$:
\begin{defi}\label{canocons}
Consider $(e_i)$ the canonical basis of $\mathbb{C}^N$. Associated to any non-crossing partition $p\in NC(k,l)$ is the linear map $T_p\in B\left(\mathbb{C}^{N^{\otimes k}},\mathbb{C}^{N^{\otimes l}}\right)$:
$$T_p(e_{i_1}\otimes\dots\otimes e_{i_k})=\sum_{j_1,\dots,j_l}\delta_p(i,j)e_{j_1}\otimes\dots\otimes e_{j_l}$$
where $i$ (respectively $j$) is the $k$-tuple $(i_1,\dots,i_k)$ (respectively $l$-tuple $(j_1,\dots,j_l)$) and $\delta_p(i,j)$ is equal to:
\begin{enumerate}
\item $1$ if all the strings of $p$ join equal indices,
\item $0$ otherwise.
\end{enumerate}
\end{defi}

\begin{exe}
We consider an element $p\in NC(4,3)$, choose any tuples $i=(i_1,i_2,i_3,i_4)$ and $j=(j_1,j_2,j_3)$, and put them on the diagram:
\setlength{\unitlength}{0.5cm}
$$
p=\left\{\ \begin{picture}(3.1,3.5)\thicklines
\put(0,0.5){\line(0,1){1.5}}
\put(2,1){\line(0,1){1}}
\put(1,0.5){\line(0,1){1.5}}
\put(0,0.5){\line(1,0){3}}
\put(3,0.5){\line(0,1){1.5}}
\put(0.4,-1.6){\line(0,1){1}}
\put(1.4,-1.6){\line(0,1){2.1}}
\put(2.4,-1.6){\line(0,1){1}}
\put(-0.1,2.2){$\cdot$}
\put(0.9,2.2){$\cdot$}
\put(1.9,2.2){$\cdot$}
\put(2.9,2.2){$\cdot$}
\put(0.4,-2.2){$\cdot$}
\put(1.4,-2.2){$\cdot$}
\put(2.4,-2.2){$\cdot$}
\put(-0.1,3){$i_1$}
\put(0.9,3){$i_2$}
\put(1.9,3){$i_3$}
\put(2.9,3){$i_4$}
\put(0.4,-3){$j_1$}
\put(1.4,-3){$j_2$}
\put(2.4,-3){$j_3$}
\end{picture}\ \right\}\ \text{ Then\ }\ 
\delta_{p}(i,j)=\left\{
    \begin{array}{ll}
        1 & \mbox{\emph{if} } i_1=i_2=i_4=j_2 \\
        0 & \mbox{\emph{otherwise.}}
    \end{array}
\right.
$$
\end{exe}

\begin{exe}\label{idtensor}
We give basic examples of such linear maps
\begin{enumerate}
\item[(i.)] T\_\setlength{\unitlength}{0.5cm}
$
\left\{\ \begin{picture}(0.01,1)\thicklines
\put(0,-0.6){\line(0,1){1.5}}
\end{picture}\ \right\}
=\id_{\mathbb{C}^N}$

\medskip
\item[(ii.)] $T\_{\{\bigcap\}}(1)
=\sum_ae_a\otimes e_a$
\end{enumerate}
\end{exe}

Tensor products, compositions and involutions of diagrams behave as follows with respect to the associated linear maps:

\begin{prp}(\cite[Proposition 1.9]{BS09}\label{tensorcat}
Let $p,q$ be non-crossing partitions and $b(p,q)$ be the number of closed blocks when performing the vertical concatenation (when it is defined). Then:
\begin{enumerate}
\item $T_{p\otimes q}=T_p\otimes T_q$,
\item $T_{p\circ q}=N^{-b(p,q)}T_p\circ T_q$,
\item $T_{p^*}=T_p^*$.
\end{enumerate}
Furthermore, the linear maps $T_p : (\mathbb{C}^N)^{\otimes k}\to(\mathbb{C}^N)^{\otimes l}, p\in NC(k,l)$ are linearly independent if $N\ge4$.
\end{prp}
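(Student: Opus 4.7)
The plan is to verify parts (1), (2), (3) by direct unpacking of $T_p$ on the canonical basis, and then establish the linear independence in (4) via a Gram matrix argument.

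For part (1), decompose the index tuples $(i,j)$ as $((i',i''),(j',j''))$ according to the point counts of $p$ and $q$. Since the tensor product of partitions is horizontal concatenation, strings of $p \otimes q$ are exactly strings of $p$ on the left block of indices and strings of $q$ on the right block, so
$\delta_{p\otimes q}(i,j) = \delta_p(i',j')\,\delta_q(i'',j'')$.
Comparing with the definition of the tensor product of two linear maps on basis vectors yields $T_{p\otimes q} = T_p \otimes T_q$. For part (3), observe that the matrix coefficients are $\langle T_p(e_i), e_j\rangle = \delta_p(i,j)$, and the reflection $p \mapsto p^\ast$ swaps upper and lower points while preserving the connectivity pattern, so $\delta_{p^\ast}(j,i) = \delta_p(i,j)$; this is exactly the matrix condition for the adjoint.

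For part (2), writing $T_p \circ T_q$ on a basis vector produces a sum over intermediate indices that are identified when $p$ is concatenated vertically with $q$. Split these intermediate indices into two classes: those sitting on \emph{closed blocks}, that is, blocks of the naive vertical concatenation which are neither connected to upper nor to lower external points, and those belonging to blocks surviving in the reduced partition $pq$. The latter constraints are captured exactly by $\delta_{pq}$, while each closed block forces an unconstrained summation over a single index, producing one factor of $N$. There are $b(p,q)$ such factors, so $T_p \circ T_q = N^{b(p,q)} T_{pq}$, which is equivalent to $T_{p\circ q} = N^{-b(p,q)} T_p \circ T_q$.

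For part (4), combine (1)--(3) to compute the Hilbert--Schmidt Gram matrix of the family $(T_p)_{p\in NC(k,l)}$:
$\langle T_p, T_q\rangle_{HS} = \mathrm{Tr}(T_p^\ast T_q) = \sum_{i,j} \delta_p(i,j)\delta_q(i,j) = N^{|p \vee q|}$,
where $p \vee q$ denotes the join in the lattice of non-crossing partitions of $\{1,\dots,k+l\}$ and $|\cdot|$ counts blocks. Linear independence of the $T_p$'s is equivalent to non-singularity of this Gram matrix $G = (N^{|p\vee q|})$. The strategy is to triangularize $G$ with respect to the refinement order $\le$ (using Möbius inversion on the non-crossing partition lattice), reducing the determinant to a polynomial in $N$ whose roots are explicit algebraic values (the ones familiar from the Jones--Temperley--Lieb tower), each strictly less than $4$. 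Consequently, $\det G \neq 0$ for every $N \geq 4$, which yields linear independence.

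The main obstacle is unquestionably part (4): items (1)--(3) are essentially bookkeeping, but the non-singularity of the Gram matrix requires nontrivial combinatorial input about the non-crossing partition lattice and the explicit location of the zeros of the Gram determinant. The threshold $N \ge 4$ is sharp and reflects exactly the critical index from Jones's theory, so the bound cannot be obtained by purely formal manipulations of the $\delta_p(i,j)$ coefficients.
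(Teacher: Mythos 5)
The paper itself offers no proof of this proposition (it is quoted from \cite{BS09}), so your argument has to stand on its own. Parts (1)--(3) are fine: the factorization of $\delta_{p\otimes q}$, the count of one free summation index per closed block, and the symmetry $\delta_{p^*}(j,i)=\delta_p(i,j)$ are exactly the right bookkeeping. The trouble is part (4), which is where all the content lies, and there are two genuine problems there. First, your Gram matrix formula is wrong as stated: the number of tuples $(i,j)$ with $\delta_p(i,j)\delta_q(i,j)=1$ is $N^{|p\vee q|}$ where the join is taken in the lattice of \emph{all} partitions of $\{1,\dots,k+l\}$, not in the non-crossing lattice. For example $p=\{\{1,3\},\{2\},\{4\}\}$ and $q=\{\{2,4\},\{1\},\{3\}\}$ are both non-crossing; the count of common index tuples is $N^{2}$ (their join in the full partition lattice has two blocks), whereas their join in $NC$ is the one-block partition, which would give $N$.

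Second, and more seriously, the nonvanishing of $\det\bigl(N^{|p\vee q|}\bigr)$ for every $N\ge 4$ is asserted rather than proved, and the mechanism you propose would not deliver it. The natural expansion behind a triangularization, $N^{|p\vee q|}=\sum_{r\ge p\vee q}N(N-1)\cdots(N-|r|+1)$, runs over \emph{all} partitions $r$ refining-coarser than $p\vee q$, not only non-crossing ones, so M\"obius inversion on the non-crossing lattice does not factor the Gram matrix into a triangular matrix times a positive diagonal; the crude version of this argument only yields independence for $N$ at least of the order of the number of points, not the sharp bound $N\ge4$ --- a limitation you yourself acknowledge at the end. The sharp statement is equivalent, via the fattening bijection between $NC(k,l)$ and Temperley--Lieb diagrams (the same collapsing/fattening operation the paper uses in Section 5), to nondegeneracy of the Temperley--Lieb Gram form at loop parameter $\delta=\sqrt N\ge 2$, and that requires real input: either Di Francesco's explicit Gram (meander) determinant formula, whose roots are the values $4\cos^2(\pi/m)<4$, or faithfulness of the Markov trace on the $C^*$-completion of $TL_\delta$ for $\delta\ge 2$. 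Some such result must be proved or at least precisely cited; ``the ones familiar from the Jones--Temperley--Lieb tower'' points at the right phenomenon but is not an argument. As written, part (4) is a plan for a proof, with its key step missing and its Gram matrix set up over the wrong lattice.
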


The Proposition \ref{tensorcat} implies easily that the collection of spaces $\text{span}\{T_p : p\in NC(k,l)\}$ form a $C^*$-tensor category with $\N$ as a set of objects. Furthermore, this tensor category has conjugates since the partitions of type

\setlength{\unitlength}{0.5cm}
$$ 
r=\left\{\ \begin{picture}(6,1.3)\thicklines

\put(0,0.5){\line(1,0){6}}
\put(2.7,1){$\emptyset$}
\put(0,-0.7){\line(0,1){1.22}}
\put(1,-0.7){\line(0,1){0.9}}
\put(5,-0.7){\line(0,1){0.9}}
\put(1,0.17){\line(1,0){4}}
\put(1.4,-0.7){...}
\put(3.6,-0.7){...}
\put(2.4,-0.7){\line(0,1){0.5}}
\put(3.4,-0.7){\line(0,1){0.5}}
\put(2.38,-0.21){\line(1,0){1.05}}
\put(6,-0.7){\line(0,1){1.22}}

\end{picture}\ \right\}\in NC(\emptyset;2k)$$
are non-crossing and since the following conjugate equations hold: 
\begin{equation}\label{eqconj}
(T_r^*\otimes \id)\circ(\id\otimes T_r)=\id=(\id\otimes T_r^*)\circ(T_r\otimes \id).
\end{equation}

We recall that in a $C^*$-tensor category with conjugates, we have the following Frobenius reciprocity theorem (see \cite{Wor88} and \cite{Nesh}) that we will use in the sequel.

\begin{thm}
Let $\mathscr{C}$ be a $C^*$-tensor category with conjugates. If an object $U\in\mathscr{C}$ has a conjugate, with $R$ and $\overline{R}$ solving the conjugate equations (see \cite[Definition 2.2.1]{Nesh}, or (\ref{eqconj}) above), then the map
$$Mor(U\otimes V,W)\to Mor(V,\overline{U}\otimes W), T\mapsto (\id_{\overline{U}}\otimes T)(R\otimes \id_V)$$ is a linear isomorphism with inverse $S\mapsto (\overline{R}^*\otimes \id_W)(\id_U\otimes S)$.
\end{thm}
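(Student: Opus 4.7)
The plan is to verify directly that the two maps $\Phi: T \mapsto (\id_{\overline{U}} \otimes T)(R \otimes \id_V)$ and $\Psi: S \mapsto (\overline{R}^* \otimes \id_W)(\id_U \otimes S)$ are mutually inverse. Linearity of both is immediate from the bilinearity of $\otimes$ and the linearity of composition in a $\mathbb{C}$-linear category, so the entire content lies in checking $\Psi \circ \Phi = \id$ and $\Phi \circ \Psi = \id$. Nothing deep is involved: the proof is purely diagrammatic and rests only on the interchange law $(f \otimes \id)(\id \otimes g) = f \otimes g = (\id \otimes g)(f \otimes \id)$, together with the two conjugate equations which I will use in the form
\[
(\overline{R}^* \otimes \id_U)(\id_U \otimes R) = \id_U, \qquad (\id_{\overline{U}} \otimes \overline{R}^*)(R \otimes \id_{\overline{U}}) = \id_{\overline{U}}.
\]

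First I would compute $\Psi(\Phi(T))$ for $T \in \Hom(U \otimes V, W)$. Expanding,
\[
\Psi(\Phi(T)) = (\overline{R}^* \otimes \id_W)(\id_U \otimes \id_{\overline{U}} \otimes T)(\id_U \otimes R \otimes \id_V).
\]
Since $T$ only acts on the last two tensor factors, I can pull the $\overline{R}^*$ past $T$ using the interchange law, rewriting the middle factor as $(\id_{U \otimes \overline{U}} \otimes T)$ and reassembling the composite as $\bigl((\overline{R}^* \otimes \id_U)(\id_U \otimes R) \otimes \id_V\bigr) \circ T$ after an analogous move on the right. By the first conjugate equation above, the inner bracket collapses to $\id_U$, hence $\Psi(\Phi(T)) = T$.

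For the reverse direction, I would start from $S \in \Hom(V, \overline{U} \otimes W)$ and write
\[
\Phi(\Psi(S)) = (\id_{\overline{U}} \otimes \overline{R}^* \otimes \id_W)(\id_{\overline{U} \otimes U} \otimes S)(R \otimes \id_V).
\]
Using the interchange law to commute $S$ past $R$, this becomes the composition of $S$ with $\bigl((\id_{\overline{U}} \otimes \overline{R}^*)(R \otimes \id_{\overline{U}})\bigr) \otimes \id_W$, which by the second conjugate equation equals $\id_{\overline{U}} \otimes \id_W = \id_{\overline{U} \otimes W}$. Thus $\Phi(\Psi(S)) = S$.

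There is really no main obstacle, only bookkeeping: the only potential pitfall is strictness of the associators and unitors, since one needs to identify $\mathbf{1} \otimes X$ with $X$ and regroup parenthesizations freely. In the setting of the paper the tensor category is assumed to be strict (as is standard for $C^*$-tensor categories in Neshveyev--Tuset), so these identifications are invisible and the two computations above finish the proof.
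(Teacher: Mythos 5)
Your proof is correct: the two computations you outline are exactly the standard verification, and the only identities needed are the interchange law and the conjugate equations, with your second identity $(\id_{\overline{U}}\otimes \overline{R}^*)(R\otimes\id_{\overline{U}})=\id_{\overline{U}}$ being the adjoint of the equation $(R^*\otimes\id_{\overline{U}})(\id_{\overline{U}}\otimes\overline{R})=\id_{\overline{U}}$ from the cited definition, which is legitimate since all morphism spaces here carry the $*$-structure of a $C^*$-category. Note that the paper itself does not prove this statement at all; it is recalled as a known result with references to Woronowicz and Neshveyev--Tuset, and your argument is precisely the proof found in those sources, so there is nothing to reconcile beyond the remark on strictness, which you already address correctly.
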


\subsection{Quantum groups}
In this subsection, we recall a few facts and results about compact quantum groups and about free wreath products by the quantum permutation groups $S_N^+$. 

A compact quantum group is a pair $\mathbb{G}=(C(\mathbb{G}),\Delta)$ where $C(\mathbb{G})$ is a unital (Woronowicz)-$C^*$-algebra and $\Delta : C(\mathbb{G})\to C(\mathbb{G})\otimes_{min} C(\mathbb{G})$ is a unital $*$-homomorphism i.e. they satisfy the coassociativity relation $(\id\otimes\Delta)\circ\Delta=(\Delta\otimes \id)\circ\Delta$, and the cancellation property, that is $span\{\Delta(a)(b\otimes 1) : a,b\in C(\mathbb{G})\}$ and $span\{\Delta(a)(1\otimes b) : a,b\in C(\mathbb{G})\}$ are norm dense in $C(\mathbb{G})\otimes C(\mathbb{G})$. These assumptions allow to prove the existence and uniqueness of a Haar state $h : C(\mathbb{G})\to \mathbb{C}$ satisfying the bi-invariance relations $(h\otimes \id)\circ\Delta(\cdot)=h(\cdot)1=(\id\otimes h)\circ\Delta(\cdot)$. In this paper we will deal with compact quantum groups of Kac type, that is their Haar state $h$ is a trace. 

One can consider the GNS representation $\lambda_h : C(\mathbb{G})\to \mathcal{B}(L^2(\mathbb{G},h))$ associated to the Haar state $h$ of $\mathbb{G}=(C(\mathbb{G}),\Delta)$ and called the left regular representation. 
 The reduced $C^*$-algebra associated to $\mathbb{G}$ is then defined by $C_r(\mathbb{G})=\lambda_h(C(\mathbb{G}))\simeq C(\mathbb{G})/Ker(\lambda_h)$ and the von Neumann algebra by $L^{\infty}(\mathbb{G})=C_r(\mathbb{G})''$. One can prove that $C_r(\mathbb{G})$ is again a Woronowicz-$C^*$-algebra whose Haar state extends to $L^{\infty}(\mathbb{G})$. We will denote simply by $\Delta$ and $h$ the coproduct and Haar state on $C_r(\mathbb{G})$. 

An $N$-dimensional (unitary) corepresentation $u=(u_{ij})_{ij}$ of $\mathbb{G}$ is a (unitary) matrix $u\in M_N(C(\mathbb{G}))\simeq C(\mathbb{G})\otimes \mathcal{B}(\mathbb{C}^N)$ such that for all $i,j\in\{1,\dots,N\}$, one has $$\Delta(u_{ij})=\sum_{k=1}^Nu_{ik}\otimes u_{kj}.$$ The matrix $\overline{u}=(u_{ij}^*)$ is called the conjugate of $u\in M_N(C(\mathbb{G}))$ and in general it is not necessarily unitary even if $u$ is. However all the compact quantum groups we will deal with are of Kac type and in this case the conjugate of a unitary corepresentation is also unitary.

An intertwiner between two corepresentations $$u\in M_{N_u}(C(\mathbb{G})) \text{ and } v\in M_{N_v}(C(\mathbb{G}))$$ is a matrix $T\in M_{N_u,N_v}(\mathbb{C})$ such that $v(1\otimes T)=(1\otimes T)u$.
We say that $u$ is equivalent to $v$, and we note $u\sim v$, if there exists an invertible intertwiner between $u$ and $v$. We denote by $\Hom_{\mathbb{G}}(u,v)$ the space of intertwiners between $u$ and $v$. A corepresentation $u$ is said to be irreducible if $\Hom_{\mathbb{G}}(u,u)=\mathbb{C} \id$. We denote by $\Irr(\mathbb{G})$ the set of equivalence classes of irreducible corepresentations of $\mathbb{G}$.

We recall that $C(\mathbb{G})$ contains a dense $*$-subalgebra denoted by $\Pol(\mathbb{G})$ and linearly generated by the coefficients of the irreducible corepresentations of $\mathbb{G}$. The coefficients of a $\mathbb{G}$-representation $r$ are given by $(\id\otimes\phi)(r)$ for some $\phi\in \mathcal{B}(H_r)^*$ if the corepresentation acts on the Hilbert space $H_r$. This algebra has a Hopf-$*$-algebra structure and in particular there is a $*$-antiautomorphism $\kappa : \Pol(\mathbb{G})\to \Pol(\mathbb{G})$ which acts on the coefficients of an irreducible corepresentation $r=(r_{ij})$ as follows $\kappa(r_{ij})=r_{ij}^*$. This algebra is also dense in $L^2(\mathbb{G},h)$. Since $h$ is faithful on the $*$-algebra $\Pol(\mathbb{G})$, one can identify $\Pol(\mathbb{G})$ with its image in the GNS-representation $\lambda_h(C(\mathbb{G}))$. We will denote by $\chi_r$ the character of the irreducible corepresentation $r\in \Irr(\mathbb{G})$, that is $\chi_r=(\id\otimes \text{Tr})(r)$. 





A fundamental and basic family of examples of compact quantum groups is recalled in the following definition:

\begin{defi}\label{permw}(\cite{Wang})
Let $N\ge2$. $S_N^+$ is the compact quantum group $(C(S_N^+),\Delta)$ where  
$C(S_N^+)$ is the universal $C^*$-algebra generated by $N^2$ elements $u_{ij}$ such that the matrix $u=(u_{ij})$ is unitary and $u_{ij}=u_{ij}^*=u_{ij}^2, \forall i,j$ (i.e. $u$ is a magic unitary) and such that
the coproduct $\Delta$ is given by the usual relations making of $v$ a finite dimensional corepresentation of $C(S_N^+)$, that is $\Delta(u_{ij})=\sum_{k=1}^Nu_{ik}\otimes u_{kj}$, $\forall i,j$.
\end{defi}

In the cases $N=2,3$, one obtains the usual algebras $C(\Z_2), C(S_3)$. 
If $N\ge4$, one can find an infinite dimensional quotient of $C(S_N^+)$ so that $C(S_N^+)$ is not isomorphic to $C(S_N)$, see e.g. \cite{Wang}, \cite{Ban05}. 


In \cite{Wang2}, Wang defined the free product $\mathbb{G}=\mathbb{G}_1*\mathbb{G}_2$ of compact quantum groups, showed that $\mathbb{G}$ is still a compact quantum group and gave a description of the irreducible corepresentations of $\mathbb{G}$ as alternating tensor products of nontrivial irreducible corepresentations. 

\begin{thm}\label{frprodcrucial}(\cite{Wang2}) Let $\mathbb{G}_1$ and $\mathbb{G}_2$ be compact quantum groups. Then the set $\emph{Irr}(\mathbb{G})$ of irreducible corepresentations of the free product of quantum groups $\mathbb{G}=\mathbb{G}_1*\mathbb{G}_2$ can be identified with the set of alternating words in $\emph{Irr}(\mathbb{G}_1)*\emph{Irr}(\mathbb{G}_2)$ and the fusion rules can be recursively described as follows:
\begin{enumerate}
\item[$\bullet$] If the words $x,y\in \emph{Irr}(\mathbb{G})$ end and start in $\emph{Irr}(\mathbb{G}_i)$ and $\emph{Irr}(\mathbb{G}_j)$ respectively with $j\ne i$ then $x\otimes y$ is an irreducible corepresentation of $\mathbb{G}$ corresponding to the concatenation $xy\in \emph{Irr}(\mathbb{G})$.
\item[$\bullet$] If $x=vz$ and $y=z'w$ with $z,z'\in \emph{Irr}(\mathbb{G}_i)$ then $$x\otimes y=\bigoplus_{1\ne t\subset z\otimes z'} vtw\oplus \delta_{\overline{z},z'}(v\otimes w)$$ where the sum runs over all non-trivial irreducible corepresentations $t\in \emph{Irr}(\mathbb{G}_i)$ contained in $z\otimes z'$, with multiplicity.
\end{enumerate}
\end{thm}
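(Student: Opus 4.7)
The plan is to work with the Hopf-$*$-algebra $\Pol(\mathbb{G})$, which by construction of the free product is the algebraic free product (amalgamated over the units) of $\Pol(\mathbb{G}_1)$ and $\Pol(\mathbb{G}_2)$. First I would decompose each factor as $\Pol(\mathbb{G}_i) = \mathbb{C} 1 \oplus \Pol(\mathbb{G}_i)_\circ$, where the second summand is the kernel of the counit; the universal property of the free product then gives that $\Pol(\mathbb{G})$ is linearly spanned by $1$ together with alternating products of elements from the $\Pol(\mathbb{G}_i)_\circ$. Combining this with the Peter--Weyl decomposition of each $\Pol(\mathbb{G}_i)$, one obtains a natural candidate list for $\Irr(\mathbb{G})$: the corepresentations $r_1 \otimes \cdots \otimes r_n$ with each $r_k$ a nontrivial irreducible of $\mathbb{G}_{i_k}$ and $i_k \ne i_{k+1}$.

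The core step is to show that every such alternating tensor product is irreducible and that different alternating words produce inequivalent corepresentations. My plan is to use Woronowicz's orthogonality relations in $L^2(\mathbb{G},h)$. The crucial input is that the Haar state of $\mathbb{G}_1*\mathbb{G}_2$ equals the free product of Haar states $h_1*h_2$, a consequence of the uniqueness and bi-invariance of $h$ applied to the free product construction; in particular $h$ vanishes on every alternating product of elements of the $\Pol(\mathbb{G}_i)_\circ$. Using this freeness, the pairing against $h$ of a matrix coefficient of $r_1\otimes\cdots\otimes r_n$ with one of any other alternating word $s_1\otimes\cdots\otimes s_m$ factors through Schur-orthogonality integrals inside each $\mathbb{G}_i$, forcing $\Hom_{\mathbb{G}}(r_1\otimes\cdots\otimes r_n,\, s_1\otimes\cdots\otimes s_m)$ to be $\mathbb{C}\id$ when the words coincide and $0$ otherwise. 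Exhaustiveness then follows because the coefficients of these alternating corepresentations already span $\Pol(\mathbb{G})$.

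Once the classification is in place, the fusion rules follow by direct inspection. When the last letter of $x$ lies in $\Irr(\mathbb{G}_i)$ and the first letter of $y$ in $\Irr(\mathbb{G}_j)$ with $i \ne j$, the concatenation $xy$ is itself an alternating word, so the corepresentation $x \otimes y$ is already the irreducible labelled by $xy$. When both terminal letters lie in $\Irr(\mathbb{G}_i)$, write $x = vz$ and $y = z'w$ and decompose inside $\Irr(\mathbb{G}_i)$:
\[
 z \otimes z' \;=\; \delta_{\overline{z}, z'}\,\mathbf{1} \;\oplus\; \bigoplus_{\mathbf{1}\ne t \subset z\otimes z'} t.
\]
Distributing over $v \otimes (\cdot) \otimes w$, each nontrivial summand $t$ produces an alternating word $vtw$, which is irreducible by the previous case, while the trivial summand (present precisely when $z' = \overline{z}$) collapses to $v \otimes w$, yielding the last term in the formula.

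The main obstacle I expect is the orthogonality/irreducibility argument of the second paragraph. The bookkeeping is delicate because a matrix coefficient $(r_k)_{ij}$ lies in $\Pol(\mathbb{G}_{i_k})_\circ$ only after centering (subtracting its value under the counit), and when one expands a product of uncentered coefficients, words of varying lengths arise; one must check that the cross-terms all vanish against $h_1*h_2$ and that the leading term reproduces the expected Schur relations for the full tensor product. An alternative, more categorical route is to invoke Tannaka--Krein duality by assembling a concrete $C^*$-tensor category with alternating words as objects and the stated Hom-spaces, and recognizing it as $\Rep(\mathbb{G}_1*\mathbb{G}_2)$ via the universal property; but this essentially repackages the same computation.
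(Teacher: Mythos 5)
This statement is quoted from Wang's free product paper (\cite{Wang2}) and the present paper gives no proof of it, so there is no internal argument to compare against; your outline is, for all practical purposes, Wang's original one. The skeleton is sound: identify $\Pol(\mathbb{G})$ with the algebraic free product, decompose each factor as $\mathbb{C}1\oplus\ker\epsilon$, show the Haar state is $h_1*h_2$ (note the logical order: one \emph{defines} the free product state, checks its bi-invariance under $\Delta$, and then invokes uniqueness of the Haar state to conclude $h=h_1*h_2$ — not the other way around), use freeness plus Schur orthogonality to get irreducibility and mutual inequivalence of alternating words, and obtain the fusion rules by decomposing $z\otimes z'$ inside $\Rep(\mathbb{G}_i)$, observing that $vtw$ is again an alternating word for $t\neq 1$ and that the trivial summand contributes the recursive term $\delta_{\overline{z},z'}\,v\otimes w$. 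For the bookkeeping you flag as the main obstacle, a cleaner route than pairing individual (uncentered) matrix coefficients is to work with characters: $\dim\Hom_{\mathbb{G}}(x,y)=h\bigl(\chi_y\,\chi_x^*\bigr)$, and since $h(\chi_r)=0$ for every nontrivial irreducible $r$ of either factor, the product $\chi_y\chi_x^*$ is evaluated by freeness through a telescoping of the inner products $h(\chi_{s}\chi_{r}^*)=\delta_{r,s}$ inside the factors, giving $\delta_{x,y}$ directly and yielding irreducibility, inequivalence, and exhaustiveness (an irreducible orthogonal to all alternating words would have vanishing coefficients) in one stroke.
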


In this paper, we are interested in the free wreath product of quantum groups:


\begin{defi}(\cite[Definition 2.2]{Bic04})
Let $A$ be a Woronowicz-$C^*$-algebra, $N\ge2$ and $\nu_i : A\to A^{*N}$ be the canonical inclusion of the $i$-th copy of $A$ in the free product $A^{*N}$, $i=1,\dots,N$. 

The free wreath product of $A$ by $C(S_N^+)$ is the quotient of the $C^*$-algebra $A^{*N}*C(S_N^+)$ by the two-sided ideal generated by the elements $$\nu_k(a)v_{ki}-v_{ki}\nu_k(a), \ \ \ 1\le i,k\le N, \ \ a\in A.$$ It is denoted by $A*_wC(S_N^+)$.
\end{defi}

In the next result, we use the Sweedler notation $\Delta_A(a)=\sum a_{(1)}\otimes a_{(2)}\in A\otimes A$.

\begin{thm}(\cite[Theorem 2.3]{Bic04})
Let $A$ be a Woronowicz-$C^*$-algebra, then free wreath product $A*_wC(S_N^+)$ admits a Woronowicz-$C^*$-algebra structure: if $a\in A$, then
$$\Delta(v_{ij})=\sum_{k=1}^Nv_{ik}\otimes v_{kj}, \forall i,j\in\{1,\dots,N\},$$ $$\Delta(\nu_i(a))=\sum_{k=1}^N\nu_i(a_{(1)})v_{ik}\otimes\nu_k(a_{(2)}),$$
$$\epsilon(v_{ij})=\delta_{ij},\ \epsilon(\nu_i(a))=\epsilon_A(a),\ S(v_{ij})=v_{ji},\ S(\nu_i(a))=\sum_{k=1}^N\nu_k(S_A(a))v_{ki},$$
$$v_{ij}^*=v_{ij},\ \nu_i(a)^*=\nu_i(a^*).$$ Moreover, if $\mathbb{G}$ is a full compact quantum group, then $\mathbb{G}\wr_*S_N^+=(A*_wC(S_N^+),\Delta)$ is also a full compact quantum group.
\end{thm}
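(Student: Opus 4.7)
The plan is to construct $\Delta$, $\epsilon$, $S$ first on the free product $A^{\ast N}\ast C(S_N^+)$ via universal properties, check that they descend to the quotient $A\ast_w C(S_N^+)$, then verify coassociativity and the cancellation property, and finally address the fullness claim.

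To build $\Delta$, I would define it piece by piece on the free product. On $C(S_N^+)$, the formula $\Delta(v_{ij})=\sum_k v_{ik}\otimes v_{kj}$ produces a magic unitary in $(A\ast_w C(S_N^+))^{\otimes 2}$, so it extends uniquely by the universal property of $C(S_N^+)$. For each index $i$, I would define a $*$-homomorphism $\Delta_i : A \to (A\ast_w C(S_N^+))^{\otimes 2}$ by $\Delta_i(a)=\sum_k \nu_i(a_{(1)})v_{ik}\otimes \nu_k(a_{(2)})$. The critical step is multiplicativity of $\Delta_i$: expanding $\Delta_i(a)\Delta_i(b)$, the commutation $v_{ik}\nu_i(b_{(1)})=\nu_i(b_{(1)})v_{ik}$ (valid because the first subscript of $v$ matches the label of $\nu$) lets one reorder, and then the magic unitary identity $v_{ik}v_{il}=\delta_{kl}v_{ik}$ collapses the double sum to $\Delta_i(ab)$; compatibility with the involution is immediate. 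These maps combine into a $*$-homomorphism from $A^{\ast N}\ast C(S_N^+)$ into $(A\ast_w C(S_N^+))^{\otimes 2}$, and one then checks that the image of the free wreath relator $\nu_k(a)v_{ki}-v_{ki}\nu_k(a)$ vanishes, using the same commutation and magic unitary identities in the target. This produces a well-defined $\Delta$ on $A\ast_w C(S_N^+)$. Coassociativity is verified by a direct computation on the generators, using coassociativity of $\Delta_A$ and of the coproduct of $C(S_N^+)$.

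The counit $\epsilon$ and antipode $S$ are constructed analogously by defining them on each generator by the stated formulas, extending by universality of the free product, and verifying that the defining relator is sent to zero (respectively, reversed appropriately in the opposite algebra); the antipode check again rests on the commutation and magic unitary identities. With $\Delta$, $\epsilon$, $S$ in hand, the cancellation property follows by the standard Hopf-algebra argument applied to the dense Hopf $*$-subalgebra generated by $\nu_i(\Pol(\mathbb{G}))$ and the $v_{ij}$: the antipode relations let one express every elementary tensor $x\otimes 1$ and $1\otimes y$ as elements of $\Delta(A\ast_w C(S_N^+))\cdot (A\ast_w C(S_N^+)\otimes 1)$ and of $\Delta(A\ast_w C(S_N^+))\cdot (1\otimes A\ast_w C(S_N^+))$ respectively, establishing the required density.

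For the fullness claim, when $A=C_u(\mathbb{G})$ the algebra $A\ast_w C(S_N^+)$ is, by construction, the universal $C^*$-algebra on the data of $N$ embeddings $\nu_i$ of $A$ together with a magic unitary $v$ satisfying the free wreath commutation relations; this universal property coincides with the one defining $C_u(\mathbb{G}\wr_* S_N^+)$, so the two agree and $\mathbb{G}\wr_* S_N^+$ is full. The main technical obstacle throughout is the careful bookkeeping of indices in $\nu_k(a)v_{ki}=v_{ki}\nu_k(a)$: this identity is valid only when the first subscript of $v$ matches the label of $\nu$, so each invocation --- in particular in the multiplicativity check for $\Delta_i$, in the descent to the quotient, and in the antipode axiom --- must respect that matching. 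Once these index checks are carried out, the remaining compact quantum group axioms follow by routine verification on generators.
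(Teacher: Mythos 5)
This theorem is quoted in the paper from \cite{Bic04} and is not proved there, so there is no in-paper proof to compare against; judged on its own, your outline is correct and is essentially a reconstruction of Bichon's argument. The key computations you invoke do go through: $\Delta_i$ is multiplicative because $v_{ik}$ commutes with $\nu_i(A)$ and $v_{ik}v_{il}=\delta_{kl}v_{ik}$; the wreath relator is annihilated by $\Delta$ (magic relations in the first leg, the commutation relation in the second leg of the target), and by $\epsilon$ and $S$ (using $v_{ki}\nu_l(x)v_{li}=\delta_{kl}\nu_k(x)v_{ki}$); and checking the counit/antipode axioms on generators suffices because $S$ is an anti-homomorphism. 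Two points deserve more care in a full write-up. First, the Sweedler formula for $\Delta(\nu_i(a))$ only makes literal sense on the canonical dense Hopf $*$-subalgebra of $A$; at the $C^*$-level you should either define $\Delta_i(a)=\sum_k\bigl((\nu_i\otimes\nu_k)\Delta_A(a)\bigr)(v_{ik}\otimes 1)$, which is a $*$-homomorphism by the same commutation argument, or work algebraically and extend by continuity. Second, where you derive the cancellation property from the Hopf-algebra antipode argument, Bichon instead exhibits the unitary corepresentations $r(\alpha)=(u_{ij}\alpha^{(i)}_{kl})$ --- precisely what the present paper cites from his proof when introducing the basic corepresentations --- and invokes Woronowicz's criterion that a unital $C^*$-algebra with coproduct generated by coefficients of unitary corepresentations is a compact quantum group; your route is equally valid, his has the added benefit of producing the generating corepresentations used later. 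Finally, your fullness argument is stated too loosely (``the universal properties coincide''): to make it precise, observe that any $*$-representation of the algebraic free wreath product of $\Pol(\mathbb{G})$ and $\Pol(S_N^+)$ restricts to $*$-representations of each copy of $\Pol(\mathbb{G})$ and of $\Pol(S_N^+)$, which extend to $C_u(\mathbb{G})$ and $C(S_N^+)$ by fullness and universality, hence yield a representation of $A^{*N}*C(S_N^+)$ factoring through $A*_wC(S_N^+)$; this identifies $A*_wC(S_N^+)$ with the enveloping $C^*$-algebra of the dense Hopf $*$-algebra, which is the assertion of fullness.
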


\begin{rque}\label{inclinj}
The homomorphisms $\nu_i : A\to A^{*N}\subset A*_{w}C(S_N^+)$ are injective and we have $\nu_i=\pi\circ\bar\nu_i,$ where $$\bar\nu_i=q\circ\nu_i : A\to A*_wC(S_N^+),$$ $q : A^{*N}*C(S_N^+)\to A*_wC(S_N^+)$ is the quotient map and $\pi : A*_wC(S_N^+)=id*\epsilon$. Hence the morphisms $\bar\nu_i : A\to A*_wC(S_N^+)$ are injective.
\end{rque}

Recall that the case of the dual of a discrete group $\mathbb{G}=\widehat{\Gamma}$ is investigated in \cite{Lem13b}. In particular, a description of the irreducible representations is given and several operator algebraic properties are obtained from this description such as factoriality and fulness of the associated von Neumann algebras at least in most cases. In this paper, we shall obtain operator algebraic properties in the more general setting of free wreath product quantum groups $\mathbb{G}\wr_*S_N^+$ with $\mathbb{G}$ compact matrix quantum group of Kac type. To do this, we will use the notion of monoidal equivalence for compact quantum groups.

Two compact quantum groups $\mathbb{G}_1$, $\mathbb{G}_2$ are monoidally equivalent if their representation categories $\text{Rep}(\mathbb{G}_1)$, $\text{Rep}(\mathbb{G}_2)$ are unitarily monoidally equivalent. That is: 

\begin{defi}\label{mono}\cite{BDRV06}
Let $\mathbb{G}_1$, $\mathbb{G}_2$ be two compact quantum groups. We say that $\mathbb{G}_1$ and $\mathbb{G}_2$ are monoidally equivalent, and we write $\mathbb{G}_1\simeq_{mon}\mathbb{G}_2$, if there exists a bijection $\phi : \Irr(\mathbb{G}_1)\to\Irr(\mathbb{G}_2)$ satisfying $\phi(1)=1$, together with linear isomorphisms still denoted $\phi$ 
$$\phi : \Hom_{\mathbb{G}_1}(x_1\otimes\dots\otimes x_k ; y_1\otimes\dots\otimes y_l)\to \Hom_{\mathbb{G}_2}(\phi(x_1)\otimes\dots\otimes \phi(x_k) ; \phi(y_1)\otimes\dots\otimes \phi(y_l))$$
such that:
\begin{enumerate}
\item[$\bullet$] $\phi(\id)=\id$,
\item[$\bullet$] $\phi(S\otimes T)=\phi(S)\otimes\phi(T)$,
\item[$\bullet$] $\phi(S^*)=\phi(S)^*$,
\item[$\bullet$] $\phi(ST)=\phi(S)\phi(T)$,
\end{enumerate}
whenever the formulas make sense.
\end{defi}

We shall prove such a monoidal equivalence for compact quantum groups whose underlying $C^*$-algebras are generated the coefficients of generating corepresentations. It will be enough to construct the maps $\phi$ of Definition \ref{mono} at the level of these generating objects and extend to the completions in the sense of Woronowicz. We refer the reader to \cite{Wor88} and \cite{BDRV06} for notions on monoidal $C^*$-categories associated with compact quantum groups. The reader may also refer to \cite{ML98} for a general introduction to categories. The following result is maybe well known but we include a proof for the convenience of the reader.

\begin{lem}\label{extcomp}
Let $\mathcal{R}_0$ and $\mathcal{S}_0$ be monoidal rigid $C^*$-tensor categories generated by certain objects $\mathcal{R}_0=\langle\alpha : \alpha\in I\rangle$, $\mathcal{S}_0=\langle\beta : \beta\in J\rangle$ and with completions (with respect to direct sums and sub-objects) $\mathcal{R},\mathcal{S}$. Let $\varphi : \mathcal{R}_0\to \mathcal{S}_0$ be an equivalence of such categories. Then there exists a equivalence of categories $\widetilde{\varphi} : \mathcal{R}\to \mathcal{S}$ extending $\varphi$.
\end{lem}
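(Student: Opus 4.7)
The plan is to realise $\mathcal{R}$ (respectively $\mathcal{S}$) as the idempotent completion of the additive hull of $\mathcal{R}_0$ (respectively $\mathcal{S}_0$), and extend $\varphi$ in two steps corresponding to these constructions. On each step the extension is canonical and automatically respects the $\ast$-monoidal (and rigid) structure, so the main work is bookkeeping.

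\textbf{Step 1, additive hull.} Form $\mathcal{R}_0^{\oplus}$ whose objects are formal tuples $(\alpha_1,\ldots,\alpha_n)$ with $\alpha_i\in\mathcal{R}_0$, standing for $\alpha_1\oplus\cdots\oplus\alpha_n$, and whose morphisms $(\alpha_i)\to(\beta_j)$ are matrices $(T_{ji})$ with $T_{ji}\in\mathrm{Hom}_{\mathcal{R}_0}(\alpha_i,\beta_j)$. Composition, involution and tensor product are defined entry-wise, using that tensor distributes over direct sums. Set $\varphi^{\oplus}(\alpha_1,\ldots,\alpha_n):=(\varphi(\alpha_1),\ldots,\varphi(\alpha_n))$ and act entry-wise on morphisms. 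Routine checks show $\varphi^{\oplus}$ is a unitary monoidal $\ast$-functor and, since $\varphi$ is fully faithful and essentially surjective, so is $\varphi^{\oplus}$ (with obvious monoidal and rigidity constraints inherited from those of $\varphi$).

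\textbf{Step 2, idempotent (Karoubian) completion.} The completion $\mathcal{R}$ is equivalent to the category $\mathrm{Kar}(\mathcal{R}_0^{\oplus})$ whose objects are pairs $(X,p)$ with $X\in\mathcal{R}_0^{\oplus}$ and $p=p^{\ast}=p^2\in\mathrm{End}(X)$, and whose morphisms $(X,p)\to(Y,q)$ are those $T\in\mathrm{Hom}(X,Y)$ satisfying $qTp=T$. Define
\[
\widetilde\varphi(X,p):=(\varphi^{\oplus}(X),\varphi^{\oplus}(p)),\qquad\widetilde\varphi(T):=\varphi^{\oplus}(T),
\]
which is well defined because $\varphi^{\oplus}$ is a $\ast$-functor (so preserves self-adjoint idempotents). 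The tensor product $(X,p)\otimes(Y,q):=(X\otimes Y,p\otimes q)$ is respected, and the duality maps extend since a dual of $(X,p)$ is $(\overline X,\overline p)$ where $\overline p$ is the image of $p$ under the fixed duality on $\mathcal{R}_0^{\oplus}$, which $\varphi^{\oplus}$ already intertwines.

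\textbf{Equivalence and obstacle.} Fullness and faithfulness of $\widetilde\varphi$ follow immediately from those of $\varphi^{\oplus}$, because the Hom-spaces in the Karoubian envelope are precisely the subspaces of $\mathrm{Hom}_{\mathcal{R}_0^{\oplus}}(X,Y)$ cut out by the projections $p$ and $q$, which are sent by $\varphi^{\oplus}$ to $\varphi^{\oplus}(p)$ and $\varphi^{\oplus}(q)$. For essential surjectivity, any object of $\mathcal{S}$ is isomorphic to some $(Y,q)$ with $Y\in\mathcal{S}_0^{\oplus}$; pick $X\in\mathcal{R}_0^{\oplus}$ with $\varphi^{\oplus}(X)\cong Y$, transport $q$ through this isomorphism, and lift it back to $p\in\mathrm{End}(X)$ using that $\varphi^{\oplus}$ restricts to a $\ast$-isomorphism $\mathrm{End}(X)\to\mathrm{End}(\varphi^{\oplus}(X))$ (so the relations $p^2=p$, $p^{\ast}=p$ are automatic). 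Then $\widetilde\varphi(X,p)\cong(Y,q)$. The main point requiring some care is verifying that monoidality and the rigid structure (the solutions $R,\overline R$ of the conjugate equations) are preserved through the Karoubian extension; this reduces entirely to the corresponding compatibilities already assumed on $\mathcal{R}_0$, but each check must be performed at the level of projections rather than on objects, which is where minor but real bookkeeping is required.
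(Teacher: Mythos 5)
Your argument is correct, and it reaches the same mathematical core as the paper's proof — namely that a $*$-equivalence carries self-adjoint idempotents to self-adjoint idempotents, so subobjects of tensor products of generators on both sides correspond — but it packages this differently. The paper never introduces a formal completion: it works directly inside the given categories $\mathcal{R},\mathcal{S}$, chooses isometries $v:a\to\bigotimes\alpha$, $w:b\to\bigotimes\beta$ realizing two subobjects, defines $\widetilde{\varphi}(a)$ as the subobject of $\bigotimes\varphi(\alpha)$ determined by the projection $\varphi(vv^*)$, sets $\widetilde{\varphi}(S):=\widetilde{\varphi}(w)^*\varphi(wSv^*)\widetilde{\varphi}(v)$, and then verifies by hand compatibility with adjoints, direct sums and composition (the key computation being $\varphi(uTw^*)\widetilde{\varphi}(ww^*)\varphi(wSv^*)=\varphi(uTw^*ww^*wSv^*)$). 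You instead identify $\mathcal{R}$ and $\mathcal{S}$ with the Karoubi envelope of the additive hull of $\mathcal{R}_0$, $\mathcal{S}_0$ and extend $\varphi$ by the functoriality of that universal construction; this makes fullness, faithfulness and essential surjectivity essentially automatic, at the cost of (i) the identification of the stated completions with $\mathrm{Kar}(\mathcal{R}_0^{\oplus})$, which is exactly what ``completion with respect to direct sums and sub-objects'' means, so this is harmless, and (ii) the duality step, where you should be slightly more explicit: the conjugate projection $\overline{p}$ is the transpose of $p$ defined through the chosen solutions $R,\overline{R}$ of the conjugate equations, and it is a self-adjoint idempotent because (standard) solutions make the transpose a $*$-antihomomorphism; since $\varphi$ is a $*$-monoidal equivalence it carries such solutions to solutions, so $\varphi^{\oplus}$ does intertwine the transposes. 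With that point spelled out, your route is a complete and somewhat more structural proof; the paper's route is more elementary and self-contained, trading the universal property for a handful of explicit verifications, and is also the form in which the extension is later used (the isometries $\widetilde{\varphi}(v)$ reappear in the proof of Theorem \ref{resmono}).
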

\begin{proof}
We prove that we can extend $\varphi$ to the sub-objects of $\mathcal{R}_0$. We denote the extension $\widetilde{\varphi}$. Let $a,b\in \mathcal{R}$. Then $a$ is contained in a tensor products of generating objects $a\subset \bigotimes\alpha$, and similarly $b\subset\bigotimes\beta$. By definition, there exist isometries $v : a\to\bigotimes\alpha$ and $w : b\to\bigotimes\beta$, $v^*v=\id=w^*w$, such that $vv^*\in \text{End}(\bigotimes\beta)$, $ww^*\in \text{End}(\bigotimes\alpha)$ are the projections onto $a$ and $b$ respectively. 

Now, $\varphi(vv^*)\in \text{End}(\bigotimes\varphi(\alpha))$ is a projection. Hence, there exist an object $\widetilde{\varphi}(a)\in \mathcal{R}$ and an isometry $\widetilde{\varphi}(v) : \widetilde{\varphi}(a)\to \bigotimes\varphi(\alpha)$ such that $\widetilde{\varphi}(v)\widetilde{\varphi}(v)^*=\widetilde{\varphi}(vv^*)$ and $\widetilde{\varphi}(v)\widetilde{\varphi}(v^*)\left(\bigotimes\varphi(\alpha)\right)=\widetilde{\varphi}(a)$. We can proceed similarly for $\varphi(ww^*)\in \text{End}(\bigotimes\varphi(\beta))$ and obtain similar object and isometry $\widetilde{\varphi}(b)$, $\widetilde{\varphi}(w)$. Then, if $S : a\to b$, we can define $\widetilde{\varphi}(S) : \varphi(a)\to\varphi(b)$ by $$\widetilde{\varphi}(S):=\widetilde{\varphi}(w)^*\varphi(wSv^*)\widetilde{\varphi}(v).$$ 

We can make two straightforward remarks. First, a simple calculation shows that $\widetilde{\varphi}(S^*)=\widetilde{\varphi}(S)^*$. Now, notice that if $c=a\oplus b$ with $a,b,c\in \mathcal{R}$ then there exist isometries $u : a\to c$ and $v : b\to c$ such that $uu^*+vv^*=\id$. We have
$$\id=\widetilde{\varphi}(\id)=\widetilde{\varphi}(uu^*+vv^*)=\widetilde{\varphi}(u)\widetilde{\varphi}(u)^*+\widetilde{\varphi}(v)\widetilde{\varphi}(v)^*,$$
so that $\widetilde{\varphi}$ extends $\varphi$ to direct sums.

Let us check that $\widetilde{\varphi}$ is compatible with the composition of morphisms. Compatibility with tensor product is clear and compatibility with involution was mentioned above. If $T : b\to c$ is a morphism between $b,c\in \mathscr{C}$, we can define $\widetilde{\varphi}(T) : \varphi(b)\to\varphi(c)$ in the same way as above starting from an isometry $u : b\to c$,
$$\widetilde{\varphi}(T):=\widetilde{\varphi}(u)^*\varphi(uTw^*)\widetilde{\varphi}(w)$$
and we have $$\widetilde{\varphi}(T\circ S):=\widetilde{\varphi}(u)\varphi(uTS v^*)\widetilde{\varphi}(v) : a\to c.$$
But,
\begin{align*}
\widetilde{\varphi}(T)\circ \widetilde{\varphi}(S)&=\widetilde{\varphi}(u)\varphi(uTw^*)\widetilde{\varphi}(w)\widetilde{\varphi}(w^*)\varphi(wSv^*)\widetilde{\varphi}(v)\\
&=\widetilde{\varphi}(u)\varphi(uTw^*)\widetilde{\varphi}(ww^*)\varphi(wSv^*)\widetilde{\varphi}(v)\\
&=\widetilde{\varphi}(u)\varphi(uTw^*ww^*wSv^*)\widetilde{\varphi}(v)\\
&=\widetilde{\varphi}(u)\varphi(uT Sv^*)\widetilde{\varphi}(v).
\end{align*}
where the third equality above comes from the fact that $\widetilde{\varphi}(ww^*)$ and then also $$\varphi(uTw^*)\widetilde{\varphi}(ww^*)\varphi(wSv^*)$$ are morphisms in the category $\mathcal{R}_0$.
\end{proof}

\section{Classical wreath products by permutation groups.}

In this section we provide a probabilistic formula for the moments of the character coming from certain wreath products of classical groups. This is in particular a hint for the formula in the free case. Recall that we denote by $\mathcal{P}(k)$ the set of all partitions of the set $\{1,\dots,k\}$.

Let $G$ be a classical group, $ n\geq 1$. Then $S_{n}$ acts on $G^{n}$ by the automorphisms 
\[s :\sigma\in S_{n}\mapsto s(\sigma). (g_{1},\dots,g_{n})=(g_{\sigma^{-1}(1)},\dots,g_{\sigma^{-1}(n)})\]
\begin{defi}
The wreath product between $G$ and $S_{n}$, $G\wr S_{n}$, is defined as the semi-direct product between $G^{n}$ and $S_{n}$, with $S_{n}$ acting on $G^{n}$ by the map $s$ above. More precisely, $$G\wr S_{n}=\lbrace ((g_{1},\dots,g_{n}),\sigma), g_{i}\in G,\sigma\in S_{n}\rbrace$$ with the product
\[((g_{1},\dots,g_{n}),\sigma)\cdot ((g'_{1},\dots,g'_{n}),\mu)=((g_{1}g'_{\sigma^{-1}(1)},\dots,g_ng'_{\sigma^{-1}(n)}),\sigma\mu).\]
\end{defi}

If $G$ is a compact group, $G\wr S_{n}$ is compact as well and thus there exists a Haar measure on $G\wr S_{n}$. It is direct to see that on $G\wr S_{n}$ is isomorph to $G\times \dots\times G\times S_{n}$ as a measure space and that the Haar measure on $G\wr S_n$ is given by $d\lambda_{G\wr S_{n}}=\bigotimes_{i} dg_{i}\otimes d\sigma$, where $d_{g}$ designates the Haar measure on $G$ and $d\sigma$ the normalized counting measure on $S_{n}$. 
If $\alpha : G\to U(V)$ is a unitary representation of $G$, then $G\wr S_{n}$ acts on $V^{\otimes n}$ via
\begin{align*}
\alpha^{n} ((g_{1},\dots,g_{n}),\sigma)(v_{1}\otimes\dots\otimes v_{n}):=
\alpha(g_{1})(v_{\sigma^{-1}(1)})\otimes\dots\otimes \alpha(g_{n})(v_{\sigma^{-1}(n)}).
\end{align*}

We will use the following notation in the sequel:
\begin{nota} If $\beta : G\to U(H)$ be a unitary representation of a compact group $G$, we denote :
\begin{enumerate}
\item[$\bullet$] $\chi_{\beta}$ denotes the character of $\beta$,
\item[$\bullet$] $F_{\beta}$ is the exponential generating serie of the moments of $\chi_{\beta}$ with respect to the Haar measure
\end{enumerate}
\end{nota}

The purpose is to describe the distribution of $\chi_{\alpha^{n}}$ under $d\lambda_{G\wr S_{n}}$,  when $\alpha$ is a represention of $G$. We will assume that $G\subset GL_{p}(\mathbb{R})$ for some $p\geq 1$. In particular $\chi_{\alpha^{n}}$ is real. The computations are similar in the complex setting; we just have to deal separately with the real and imaginary part of $\chi_{\alpha}$.\\
\begin{nota}
For each partition $\nu\in\mathcal{P}(k)$ with blocks $B_{1},\dots B_{r}$ and sequence of numbers $(c_{1},\dots,c_{n},\dots)$ of length greater than $k$ we write 
$$c_{\nu}=c_{\vert B_{1}\vert}c_{\vert B_{2}\vert}\dots c_{\vert B_{r}\vert}$$
with $\vert B_{i}\vert$ being the cardinal of the block $B_{i}$.
\end{nota}
\begin{prp}\label{clres}
The exponential serie of the moments of $\chi_{\alpha^n}$ is given by 

\begin{align*}
F_{\alpha^{n}}(x)=&\sum m_{\alpha^{n}}(k)\frac{x^{k}}{k!}
\end{align*}

with $$m_{\alpha^n}(k)=\sum_{\nu\in\mathcal{P}(k),l(\nu)\leq n}m_{\alpha}(\nu).$$
with $l(\pi)$ being the length of a partition $\pi$, that is the number of blocks of $\pi$.
\end{prp}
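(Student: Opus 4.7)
The plan is to compute $m_{\alpha^n}(k)=\int_{G\wr S_n}\chi_{\alpha^n}^k\,d\lambda$ iteratively, integrating over $G^n$ first (with $\sigma$ fixed) and then averaging over $S_n$. A preparatory step is to read off from the definition of the action an explicit formula for $\chi_{\alpha^n}((g,\sigma))$, by computing diagonal matrix entries in the basis $(e_{j_1}\otimes\cdots\otimes e_{j_n})$ and organising the trace according to the cycle/fixed-point structure of $\sigma$. This expresses $\chi_{\alpha^n}((g,\sigma))$ as a sum of terms of the form $\chi_\alpha(g_i)$ indexed by positions controlled by $\sigma$.

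Raising to the $k$-th power expands $\chi_{\alpha^n}^k$ as a multi-sum indexed by functions $f:[k]\to[n]$, where $f(a)$ selects which variable $g_{f(a)}$ contributes to the $a$-th factor. Applying Fubini together with Haar invariance on each copy of $G$, the integration over $G^n$ collapses each coordinate independently to a moment $m_\alpha(p)$ with $p=|f^{-1}(i)|$. Regrouping by the set partition $\nu\in\mathcal{P}(k)$ whose blocks are the fibres of $f$, the $G^n$-integral for fixed $\sigma$ takes the form
\[
\sum_{\nu\in\mathcal{P}(k)}C(\nu,\sigma)\,m_\alpha(\nu),\qquad m_\alpha(\nu)=\prod_{B\in\nu}m_\alpha(|B|),
\]
where $C(\nu,\sigma)$ counts the number of injections from the blocks of $\nu$ into the $\sigma$-controlled index set.

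To finish, I would average over $\sigma\in S_n$. The decisive identity is
\[
\frac{1}{n!}\sum_{\sigma\in S_n}\bigl(|\mathrm{Fix}(\sigma)|\bigr)_r=\begin{cases}1 & \text{if }r\le n,\\ 0 & \text{if }r>n,\end{cases}
\]
proved by double-counting: each ordered $r$-tuple of distinct elements of $[n]$ is fixed by exactly $(n-r)!$ permutations, and there are $(n)_r$ such tuples, totalling $n!$. Applied with $r=l(\nu)$, this averaging strips off the combinatorial multiplicities, enforces the constraint $l(\nu)\le n$, and yields the stated formula $m_{\alpha^n}(k)=\sum_{\nu\in\mathcal{P}(k),\,l(\nu)\le n}m_\alpha(\nu)$.

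The main obstacle is the combinatorial bookkeeping in the second step: matching each term in the expansion of $\chi_{\alpha^n}^k$ with the correct pair $(\sigma,\nu)$ and multiplicity $C(\nu,\sigma)$ so that the falling-factorial averaging identity applies cleanly. Once this bookkeeping is done, the averaging returns $0$ or $1$ per partition and the expression collapses to the elegant partition-indexed formula announced in the proposition.
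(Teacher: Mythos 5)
Your argument is correct, and it completes: the coordinatewise integration over $G^n$ does produce $\sum_{\nu\in\mathcal{P}(k)}(|\mathrm{Fix}(\sigma)|)_{l(\nu)}\,m_\alpha(\nu)$ (an $f:[k]\to\mathrm{Fix}(\sigma)$ with fibre partition $\nu$ is exactly an injection of the blocks of $\nu$ into $\mathrm{Fix}(\sigma)$), and your double-counting identity $\frac{1}{n!}\sum_{\sigma}(|\mathrm{Fix}(\sigma)|)_r=\mathbf{1}_{r\le n}$ then collapses the sum to the stated formula. But your route is genuinely different from the paper's. The paper keeps everything at the level of exponential generating series: after integrating $\exp(x\chi_{\alpha^n})$ over $G^n$ it obtains $\int_{S_n}F_\alpha(x)^{\#\mathrm{Fix}(\sigma)}d\sigma$, recognizes this as $F_{S_n}(\log F_\alpha(x))$, expands via the classical cumulants of $\chi_\alpha$ and the multinomial formula, invokes the known fact that $m_{S_n}(r)$ counts partitions of $\{1,\dots,r\}$ with at most $n$ blocks, and finally performs a moment--cumulant resummation over the partition lattice to land on $\sum_{\nu,\,l(\nu)\le n}m_\alpha(\nu)$. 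Your proof avoids cumulants and the cited $S_n$ moment formula altogether; in effect your falling-factorial identity is an elementary direct proof of the only input the paper imports, so your bookkeeping is shorter and self-contained, while the paper's detour makes visible the compound structure $F_{\alpha^n}=F_{S_n}\circ\log F_\alpha$ that foreshadows the free compound Poisson statement later in the paper. One caution on your first step: for the tensor-power action as literally written in the paper, the trace in the basis $(e_{j_1}\otimes\cdots\otimes e_{j_n})$ factorizes over the cycles of $\sigma$ as traces of products $\mathrm{Tr}(\alpha(g_{i_1})\cdots\alpha(g_{i_m}))$, not as a sum of $\chi_\alpha(g_i)$ over fixed points; the fixed-point formula you (and the paper's own computation) use is the character of the action on $\mathbb{C}^n\otimes V$, the true classical analogue of $r(\alpha)$, so state that identification explicitly rather than deriving it from the tensor-power matrix entries.
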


\begin{proof}
Let $t=((g_1,\dots,g_n),\sigma)\in G\wr S_n$, we have for $x>0$ small enough. Writing the action of $t$ through $\alpha^{n}$ in block matrices yields the following result
\begin{align*}
F_{\alpha^{n}}(x)=&\mathbb{E}_{G\wr S_{n}}(\exp(x\text{Tr}(\alpha^{n}(t)))=\int_{G\wr S_{n}}\exp\left(x\sum_{i\text{ fixed point of }\sigma} \text{Tr}(\alpha(g_{i}))\right)\prod dg_{i} d\sigma\\
=&\int_{S_{n}}\prod_{i\text{ fixed point of $\sigma$}} \left(\int_{G_{i}}\exp(x \times \text{Tr}(\alpha(g_{i})))dg_{i}\right) d\sigma\\
=&\int_{S_{n}}\prod_{i\text{ fixed point of $\sigma$}} F_{\alpha}(x)d\sigma\\
=&\int_{S_{n}}F_{\alpha}(x)^{\#\text{ fixed points of s}}d\sigma\\
=&\int_{S_{n}}\exp\left(\log(F_{\alpha}(x))\#\text{ fixed point of $\sigma$}\right)d\sigma.\\
\end{align*}
Considering $\log(F_{\alpha}(x))$ as fixed in the last integral yields the equality 
\begin{equation}
F_{\alpha^{n}}(x)=\log(F_{\alpha}(x))
\label{clstep1}
\end{equation}
where $F_{S_n}$ designates the exponential generating serie of the moments of the natural representation $S_n\hookrightarrow M_n(\mathbb{C})$. Now, we can exploit the general facts that 
\begin{equation}
F_{\beta}(x)=\sum m_{\beta}(k)\frac{x^{k}}{k!}
\label{cldef1}
\end{equation}
and 
\begin{equation}
\log F_{\beta}(x)=\sum c_{\beta}(k)\frac{x^{k}}{k!}
\label{cldef2}
\end{equation}
where $\left(m_{\beta}(k)\right)_{k\geq 1}$ are the moment of the law of $\chi_{\beta}$ and $\left(c_{\beta}(k)\right)_{k\geq 1}$ are the classical cumulants of this law. The latter is the only sequence of real numbers satisfying:
\begin{equation}
m_{\beta}(k)=\sum_{\pi\in\mathcal{P}(k)}c_{\beta}(\pi)
\label{cldef3}
\end{equation}
for all $k\geq 1$.
From the left-hand side of $\eqref{clstep1}$ and $\eqref{cldef1}$ we get
$$F_{\alpha^{n}}(x)=\sum_k m_{\alpha^{n}}(k)\frac{x^{k}}{k!}$$
 and from the right-hand of $\eqref{clstep1}$ with $\eqref{cldef2}$ we compute
\begin{align*}
F_{\alpha^{n}}(x)=&\sum_r m_{S_{n}}(r)\frac{\left(\sum c_{\alpha}(u)\frac{x^{u}}{u!}\right)^{r}}{r!}\\
=&\sum_k \frac{x^{k}}{k!} \sum_{r}\frac{m_{S_{n}}(r)}{r!}\sum_{\begin{subarray} au_{1}\times 1+\dots+u_{k}\times k=k\\ \sum u_{i}=r\end{subarray}}k!\frac{r!}{u_{1}!\dots u_{r}!}\left(\frac{c_{\alpha}(1)}{1!}\right)^{u_{1}}\dots \left(\frac{c_{\alpha}(k)}{k!}\right)^{u_{k}}\\
\end{align*}
The last equality above being due to the multinomial expansion. Hence, after identifying coefficients we obtain:
\begin{equation}m_{\alpha^{n}}(k)=\sum_rm_{S_n}(r)\sum_{\begin{subarray} au_{1}\times 1+\dots+u_{k}\times k=k\\\sum u_{i}=r\end{subarray}}k!\frac{1}{u_{1}!\dots u_{r}!}\left(\frac{c_{\alpha}(1)}{1!}\right)^{u_{1}}\dots \left(\frac{c_{\alpha}(k)}{k!}\right)^{u_{k}} \label{clstep2}\end{equation}
We say that a partition $p\in\mathcal{P}(k)$ is of type $(1^{u_{1}},\dots,k^{u_{r}})$, if it is a partition having $u_{1}$ blocks of cardinal $1$, $u_{2}$ of cardinal $2$ and so on. We know that the number of partitions of $\{1,\dots,k\}$ with type $(1^{u_{1}},\dots,k^{u_{r}})$ is exactly $$\frac{k!}{u_{1}!\dots u_{k}!}\frac{1}{1!^{u_{1}}\dots k!^{u_{k}}},$$ see e.g. page 22 in \cite{macdonald1998symmetric}. So we have by summing over every types of partition in \eqref{clstep2}:
\begin{equation}m_{\alpha^{n}}(k)=\sum_{r}m_{S_{n}}(r)\sum_{\pi\in\mathcal{P}(k),l(\pi)=r}c_{\alpha}(\pi)\label{clstep3}\end{equation}
Using the fact that (see \cite{BS09}, \cite{rosas2006symmetric})
\begin{equation}
m_{S_{n}}(r)=\#\lbrace\text{partitions of $\{1,\dots,r\}$ having at most $n$ blocks}\rbrace
\label{cldef4}\end{equation}
we can transform \eqref{clstep3} into
\begin{align*} 
m_{\alpha^{n}}(k)=&\sum_{r}\sum_{\substack{\nu\leq\mathbf{1}_{r}\\ l(\nu)\leq n}} \sum_{\substack{\pi\leq \mathbf{1}_{k}\\l(\pi)=r}}c_{\alpha}(\pi)=\sum_{r}\sum_{\substack{\pi\leq \mathbf{1}_{k}\\l(\pi)=r}}\sum_{\substack{\pi\leq\nu\leq\mathbf{1}_{k}\\l(\nu)\leq n}}c_{\alpha}(\pi)\\
=&\sum_{\pi\leq \mathbf{1}_{k}}\sum_{\substack{\pi\leq\nu\leq\mathbf{1}_{k}\\l(\nu)\leq n}}c_{\alpha}(\pi)
=\sum_{\substack{\nu\leq\mathbf{1}_{k}\\l(\nu)\leq n}}\sum_{\pi\leq\nu }c_{\alpha}(\pi)\\
=&\sum_{\substack{\nu\leq\mathbf{1}_{k}\\l(\nu)\leq n}}m_{\alpha}(\nu).\\
\end{align*}

\end{proof}

We can deduce from \ref{clres} the aymptotic law of $\chi_{\alpha^n}$ when $n$ goes to infinity :
\begin{crl} We have the convergence in moments
\[\chi_{\alpha^{n}}\underset{n\to \infty}{\longrightarrow}\mathcal{P}(\chi_{\alpha})\]
where $\mathcal{P}(\chi_{\alpha})$ is the compound Poisson law with parameter $1$ and original law $\alpha$.
\end{crl}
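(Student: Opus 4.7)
The plan is straightforward once we combine Proposition \ref{clres} with the classical moment-cumulant description of a compound Poisson distribution.

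First, I would fix $k \ge 1$ and take $n \ge k$. Since every partition $\nu \in \mathcal{P}(k)$ has at most $k$ blocks, the condition $l(\nu) \le n$ is automatically satisfied. Hence by Proposition \ref{clres}, for all $n \ge k$,
\[
m_{\alpha^{n}}(k) = \sum_{\nu \in \mathcal{P}(k)} m_{\alpha}(\nu),
\]
so the sequence $(m_{\alpha^n}(k))_{n \ge k}$ is not only convergent but actually constant in $n$, with limit the right-hand side above.

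Second, I would identify this limit with the $k$-th moment of $\mathcal{P}(\chi_\alpha)$. Recall that the classical cumulants of a compound Poisson law with parameter $1$ and base law equal to the distribution of $\chi_\alpha$ are precisely the moments of $\chi_\alpha$, i.e.\ $c_k(\mathcal{P}(\chi_\alpha)) = m_\alpha(k)$. This is immediate from the fact that $\log F_{\mathcal{P}(\chi_\alpha)}(x) = F_{\chi_\alpha}(x) - 1$, which one reads off from the Lévy--Khintchine formula for compound Poisson (or equivalently from the formula $\mathcal{P}(\mu) = \sum_{j=0}^{\infty} e^{-1}\frac{1}{j!}\mu^{*j}$). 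Multiplicativity of cumulants across blocks then gives $c_\alpha(\nu) = m_\alpha(\nu)$ for every $\nu \in \mathcal{P}(k)$, and applying the classical moment-cumulant formula \eqref{cldef3} to $\mathcal{P}(\chi_\alpha)$ yields
\[
m_k(\mathcal{P}(\chi_\alpha)) = \sum_{\nu \in \mathcal{P}(k)} c_\alpha(\nu) = \sum_{\nu \in \mathcal{P}(k)} m_\alpha(\nu).
\]

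Combining these two observations, we conclude $m_{\alpha^n}(k) \to m_k(\mathcal{P}(\chi_\alpha))$ for every $k$, which is exactly convergence in moments.

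There is essentially no obstacle: the combinatorial work has been done in Proposition \ref{clres}, and what remains is the well-known identification of a compound Poisson law by its cumulants. The only point that would need a sentence of justification is the identity $c_k(\mathcal{P}(\chi_\alpha)) = m_\alpha(k)$, which can be justified either through the exponential generating series computation above or by noting that for the classical Poisson weighting $e^{-1}\sum_{j \ge 0} \frac{1}{j!}\mu^{*j}$, the exponential generating function of moments is $\exp(F_{\chi_\alpha}(x)-1)$.
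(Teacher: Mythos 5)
Your proposal is correct and follows essentially the same route as the paper: apply Proposition \ref{clres} and note that the restriction $l(\nu)\le n$ disappears for $n\ge k$, so that $m_{\alpha^n}(k)$ converges (in fact stabilizes) to $\sum_{\nu\in\mathcal{P}(k)}m_{\alpha}(\nu)$, which is the $k$-th moment of $\mathcal{P}(\chi_\alpha)$. The only difference is that you spell out the identification $c_k(\mathcal{P}(\chi_\alpha))=m_\alpha(k)$ via the moment-cumulant formula, a step the paper leaves implicit, and that is a fine (and correct) addition.
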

\begin{proof}
We have
\begin{align*}
m_{\alpha^{n}}(k)&=\sum_{\nu\leq\mathbf{1}_{k},l(\nu)\leq n}m_{\alpha}(\nu)\\
&\underset{n\to \infty}{\longrightarrow}\sum_{\nu\leq\mathbf{1}_{k}}m_{\alpha}(\nu)=m_{\mathcal{P}(\chi_{\alpha})}.
\end{align*}
\end{proof}
\begin{rque}
In the next section we will determine the intertwiner spaces for a free wreath product $\mathbb{G}\wr_{*}S_{N}^{+}$. The result and proofs can be easily adapted to get the same result in the classical case; one only need to use all partitions instead of non-crossing ones.
\end{rque}

\section{Intertwiner spaces in $\mathbb{G}\wr_*S_N^+$}\label{intfreec}
Let $\mathbb{G}=(C(\mathbb{G}),v)$ be a compact matrix quantum group of Kac type, generated by a unitary $v$ acting on $H$. In this section, the $C^*$-algebras associated with compact quantum groups are considered in their maximal versions. We consider a generating magic unitary $u$ of the free quantum permutation group $S_N^+$ acting on $\mathbb{C}^N$. We recall that the corepresentation $$\omega:=(\omega_{ijkl})_{1\le i,j\le N}^{1\le k,l\le d_{\mathbb{G}}}=(u_{ij}v_{kl}^{(i)})_{i,j,k,l}$$ acting on $W:=\mathbb{C}^N\otimes H$, is the generating matrix of the free wreath product quantum groups $\mathbb{G}\wr_*S_N^+$, see \cite{Bic04}. 

We set $\Rep(\mathbb{G})=\{\alpha\in I\}$ the set of equivalence classes of unitary finite dimensional (not necessarily irreducible) corepresentations of $\mathbb{G}$ and we denote by $H^\alpha=\langle Y_1^\alpha,\dots,Y_{d_{\alpha}}^\alpha\rangle$ the representation space of $\alpha$. We have a natural family of $\mathbb{G}\wr_*S_N^+$-representations, see the proof of Theorem 2.3 in \cite{Bic04}.

\begin{defi}\label{defralpha}
A family of unitary corepresentations of $\mathbb{G}\wr_*S_N^+$ is given by $$\{r(\alpha):=\left(u_{ij}\alpha_{kl}^{(i)}\right) : \alpha\in I\}.$$ Notice that $r(\alpha)$ acts on the vector space $\mathbb{C}^N\otimes H^{\alpha}$. These corepresentations will be called basic corepresentations for $\mathbb{G}\wr_*S_N^+$.

\end{defi}

We recall that we denote by $\{T_p\}_p$ the basis of $\Hom(u^{\otimes k},u^{\otimes l})$ with $p\in NC(k,l)$ for any $k,l\in\mathbb{N}$. For any $p\in NC(k,l)$ we make the convention that the points on top from left to right are weighted by the numbers $1,\dots,k$ and the points on bottom by the numbers $k+1,\dots,k+l$.

\begin{nota}\label{ordb} Let us chose an order on the blocks of the partitions $p\in NC(k,l)$. We write $p=\{B_1,\dots,B_r\}$, the block decomposition of $p$ with the following order: we fix $B_1$ the block containing $1$, $B_2$ the first block, if it exists, containing the smallest $1<i\le k+l$ with $i\notin B_1$, etc. 
\end{nota}


We want to describe the intertwiner spaces between tensor products of basic corepresentations of $\mathbb{G}\wr_*S_N^+$. These spaces will be described by linear maps associated with certain non-crossing partitions and with $\mathbb{G}$-morphisms. Indeed, let $[\alpha]:=(\alpha_1,\dots,\alpha_k)$ and $[\beta]:=(\beta_1,\dots,\beta_l)$ be tuples of $\mathbb{G}$-representations such that the points of $p$ are decorated by these corepresentations. This means that in each block $B_i$, certain corepresentations $\alpha^i_1,\dots,\beta^i_1,\dots,$ are attached to the upper and lower points respectively. 
We make the convention that if $k=0$, then the trivial corepresentation decorates the upper part of $p\in NC(0,l)$ and an analogue convention if $l=0$. The non-crossing partition describing intertwiners in $\mathbb{G}\wr_*S_N^+$ will also be such that their blocks are decorated by $\mathbb{G}$-morphisms. To be more precise, let us introduced some notation.


\begin{nota}\label{secdia} 
Let $p\in NC(k,l)$, $p$ given by its blocks denoted $B$. We will simplify the notation $B$ into $B$ when the context is clear. We denote:
\begin{enumerate} 
\item[$\bullet$] $B=U_B\cup L_B$ the upper and lower parts of each block $B$. 
\item[$\bullet$] We denote $H^{U_B}=\bigotimes_{i\in U_B}H^{\alpha_i}$ the tensor product of spaces $H^{\alpha_i}$, and similarly we denote $H^{L_B}=\bigotimes_{j\in L_B}H^{\beta_j}$.
\item[$\bullet$] We denote $\alpha(U_B)=\bigotimes_{i\in U_B}\alpha_i$ the tensor product of corepresentations $\alpha_i$ and similarly we denote $\beta(L_B)=\bigotimes_{j\in L_B}\beta_j$.
\end{enumerate}

\end{nota}

Furthermore, we assume that ``attached" to each block $B$ there is a $\mathbb{G}$-morphism 
\begin{align}\label{lonal}
S_B=\alpha(U_B)\to\beta(L_B)\in \mathcal{B}(H^{U_B},H^{L_B})
\end{align}
 and we put

\begin{align}\label{longalpha}
S=\bigotimes_BS_B: \bigotimes_B\alpha(U_B)\to\bigotimes_B\beta(L_B)
\end{align}
with the order on the blocks we gave above.
We say that the blocks of $p$ are \textit{decorated} by $[S]=(S_1,\dots,S_r)$ where $r$ is the number of blocks in $p$. 
\begin{defi}\label{admis}
We say that the partition $p$ decorated by representations $[\alpha],[\beta]$ is admissible if $\forall B\in p, \Hom_{\mathbb{G}}(\alpha(U_B);\alpha(L_B))\ne0$. 
\end{defi}

Therefore, we can consider  
\begin{align*}
T_p\otimes S \in \mathcal{B}\left((\mathbb{C}^N)^{\otimes k}\otimes\bigotimes_{B} H^{U_B} ; (\mathbb{C}^N)^{\otimes l}\otimes\bigotimes_{B}^r H^{L_B}\right).
\end{align*}

\begin{rque}\label{theyrefree}
Notice that if the $\mathbb{G}$-morphisms in $\mathcal{B}(H^{U_B},H^{L_B})$ run over a basis of intertwiners $\alpha(U_B)\to\beta(L_B)$ then the family $(T_p\otimes S)_{p,S}$ is free.
\end{rque}

We shall twist this linear map to obtain a morphism 
\begin{align*}
\widetilde{T_p\otimes S}\in \Hom_{\mathbb{G}\wr_* S_N^+}(r(\alpha_1)\otimes\dots\otimes r(\alpha_k), r(\beta_1)\otimes\dots\otimes r(\beta_l)).
\end{align*}

\begin{nota}\label{cantwist}
Let $p\in NC(k,l)$ decorated by $\mathbb{G}$-representations $[\alpha],[\beta]$ and morphisms $[S]$ as in the above notation. One can consider a unitary $t_p^U$ acting on vectors $x_i\in \mathbb{C}^N$, $y_i\in H^{\alpha_i}$, $i=1,\dots,k$
\begin{align*}
t_p^{U} : (\mathbb{C}^N\otimes H^{\alpha_1})\otimes\dots\otimes (\mathbb{C}^N\otimes H^{\alpha_k})&\to (\mathbb{C}^N)^{\otimes k}\otimes\bigotimes_{B} H^{U_B},\\
\bigotimes_{i=1}^k(x_i\otimes y_i)&\mapsto\bigotimes_{i=1}^kx_i\otimes\bigotimes_{B}\bigotimes_{i'\in U_B}y_{i'}
\end{align*}
and a unitary $t_p^L$ acting on vectors $x_j\in \mathbb{C}^N$, $y_j\in H^{\beta_j}$, $j=1,\dots,l$
\begin{align*}
t_p^{L} :  (\mathbb{C}^N\otimes H^{\beta_1})\otimes\dots\otimes (\mathbb{C}^N\otimes H^{\beta_l}) &\to (\mathbb{C}^N)^{\otimes l}\otimes\bigotimes_{B} H^{L_B},\\
\bigotimes_{j=1}^l(x_j\otimes y_j) &\mapsto \bigotimes_{j=1}^lx_j\otimes\bigotimes_{B}\bigotimes_{j'\in L_B}y_{j'}.
\end{align*}
We denote 
\begin{align*}
U^{p,S}&:=(t_L^{p})^*\circ(T_p\otimes S)\circ t_U^p\\
&\in \mathcal{B}\left((\mathbb{C}^N\otimes H^{\alpha_1})\otimes\dots\otimes (\mathbb{C}^N\otimes H^{\alpha_k}),(\mathbb{C}^N\otimes H^{\beta_1})\otimes\dots\otimes (\mathbb{C}^N\otimes H^{\beta_l}) \right).
\end{align*}
\end{nota}


We can now prove the following result:

\begin{thm}\label{bigthm}
Let $\mathbb{G}=(C(\mathbb{G}),\Delta)$ be a compact quantum group of Kac type. Let $\alpha_1,\dots,\alpha_k$ and $\beta_1,\dots,\beta_l$ be finite dimensional corepresentations in $\Rep(\mathbb{G})$. We set $[\alpha]=(\alpha_1,\dots,\alpha_k)$ and $[\beta]=(\beta_1,\dots,\beta_l)$. Then

\begin{align}\label{ncint}
\Hom_{\mathbb{G}\wr_* S_N^+}(r(\alpha_1)\otimes\dots&\otimes r(\alpha_k) ; r(\beta_1)\otimes\dots\otimes r(\beta_l))\\
\label{ncint2}&=\emph{span}\{U^{p,S} : p\in NC_{\mathbb{G}}([\alpha],[\beta]), S \emph{ as below}\}
\end{align}

where $U^{p,S}=(t_L^p)^*\circ(T_p\otimes S)\circ t_U^p$ with
\begin{enumerate}
\item[$\bullet$] the isomorphisms $t_U^p, t_L^p$ defined in Notation \ref{cantwist},
\item[$\bullet$]  $NC_{\mathbb{G}}([\alpha],[\beta])$ consists of non-crossing partitions in $NC(k,l)$ decorated with corepresentations $[\alpha], [\beta]$ on the upper and lower points respectively,
\item[$\bullet$] $S=\bigotimes_BS_B: \bigotimes_B\alpha(U_B)\to\bigotimes_B\beta(L_B)$ as in (\ref{longalpha}), where the $\mathbb{G}$-morphisms in $\mathcal{B}(U_B,L_B)$ which decorate the blocks $B\in p$ run over intertwiners $\alpha(U_B)\to\beta(L_B)$.
\end{enumerate}
\end{thm}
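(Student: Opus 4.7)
The plan is to establish the two inclusions separately: the inclusion $\supseteq$ by a direct computation on matrix coefficients, and $\subseteq$ through a Tannaka--Krein reconstruction argument combined with a universal-property check.

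For the easy direction $\supseteq$, I would verify that each $U^{p,S}$ commutes with $r(\alpha_1)\otimes\cdots\otimes r(\alpha_k)$ and $r(\beta_1)\otimes\cdots\otimes r(\beta_l)$ by writing the intertwining relation in coordinates. After stripping away the twists $t^p_U$ and $t^p_L$, the equation reduces to a sum over multi-indices of products of generators of the form $u_{ab}\nu_c(\text{coefficient of }\alpha_i)$. Two ingredients combine here: the magic unitary relations $u_{ab}u_{ac}=u_{ab}u_{cb}=0$ for $b\ne c$, which together with the delta $\delta_p(i,j)$ coming from $T_p$ force, on each block $B$ of $p$, all indices to collapse to a single common value $m_B\in\{1,\dots,N\}$; and the commutation $\nu_{m_B}(a)u_{ij}=u_{ij}\nu_{m_B}(a)$ from the definition of the free wreath product, which lets us gather all the coefficients attached to $B$ inside the single copy $\nu_{m_B}(C(\mathbb{G}))$. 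Once gathered, the $\mathbb{G}$-intertwining property of $S_B$ finishes the verification.

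For $\subseteq$, I would first check that the spans $\mathcal{C}([\alpha],[\beta]):=\operatorname{span}\{U^{p,S}\}$ form a concrete rigid $C^*$-tensor subcategory. Closure under tensor product is evident from $U^{p,S}\otimes U^{q,T}=U^{p\otimes q,\,S\otimes T}$; closure under involution follows from $(U^{p,S})^*=U^{p^*,S^*}$; and closure under composition requires more care, using Proposition~\ref{tensorcat}: when vertically concatenating $p$ and $q$, closed blocks contribute a scalar $N^{-b(p,q)}$, and the internal $\mathbb{G}$-morphisms on those blocks must be composed and traced, producing scalars that still leave the result in the span of the $U^{\cdot,\cdot}$. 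Woronowicz's Tannaka--Krein theorem then yields a compact matrix quantum group $\mathbb{H}$ with generating corepresentations $\tilde r(\alpha)$ whose morphism spaces are exactly $\mathcal{C}([\alpha],[\beta])$, and the inclusion $\supseteq$ supplies a surjective Hopf $*$-morphism $\pi : C(\mathbb H)\twoheadrightarrow C(\mathbb{G}\wr_* S_N^+)$ sending $\tilde r(\alpha)\mapsto r(\alpha)$.

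It remains to construct the inverse of $\pi$ by exhibiting inside $C(\mathbb H)$ the universal data defining the free wreath product. Taking $\alpha$ trivial in $\tilde r$ yields a corepresentation on $\mathbb{C}^N$; the morphisms between its tensor powers coming from partitions decorated only with the trivial representation reduce to the usual $T_p$ for $S_N^+$, forcing the coefficients to satisfy the magic unitary relations and hence defining a map $C(S_N^+)\to C(\mathbb H)$. For each $\alpha\in\mathrm{Rep}(\mathbb G)$, the singleton and two-block partition morphisms decompose $\tilde r(\alpha)$ on $\mathbb{C}^N\otimes H^\alpha$ into $N$ mutually orthogonal pieces indexed by $i\in\{1,\dots,N\}$; each such piece carries a $*$-homomorphism $C(\mathbb G)\to C(\mathbb H)$ whose image commutes with the $i$-th row of the magic unitary, exactly reproducing the defining relations of $A*_w C(S_N^+)$. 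The universal property then yields a $*$-homomorphism $C(\mathbb{G}\wr_* S_N^+)\to C(\mathbb H)$ inverse to $\pi$. The main obstacle is this last step: extracting the concrete free wreath product relations from the abstractly reconstructed $\mathbb H$. The hypothesis $N\ge 4$ from Proposition~\ref{tensorcat}, combined with Remark~\ref{theyrefree}, is essential: it guarantees that the $U^{p,S}$'s are linearly independent, so that dimensions actually match and the surjection $\pi$ is bijective rather than merely a quotient.
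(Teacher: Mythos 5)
Your overall architecture is the same as the paper's: verify directly that the $U^{p,S}$ intertwine (using the magic unitary relations plus the free wreath commutation to collapse indices blockwise), check that the spans form a rigid tensor category, reconstruct a quantum group $\mathbb{H}$ by Tannaka--Krein, get the surjection $\pi_1:C(\mathbb{H})\to C(\mathbb{G}\wr_*S_N^+)$ by universality, and then build an inverse by locating the free wreath product relations inside $C(\mathbb{H})$. The problem is that your proposal stops exactly where the real work begins, and the one mechanism you do offer for that step is wrong. You claim that ``the singleton and two-block partition morphisms decompose $\tilde r(\alpha)$ on $\mathbb{C}^N\otimes H^\alpha$ into $N$ mutually orthogonal pieces indexed by $i$,'' each carrying a copy of $C(\mathbb{G})$. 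No such decomposition exists: the subspaces $e_i\otimes H^\alpha$ are not invariant, and indeed for $\alpha$ irreducible and nontrivial $r(\alpha)$ is itself irreducible (Corollary \ref{corbas}), so $\tilde r(\alpha)$ cannot split into $N$ subcorepresentations. What the paper actually does is purely algebraic: it sets $V^{(i)}_{kl}:=\sum_j\Omega_{ijkl}$ and $U_{ij}:=\sum_l\Omega_{ijkl}\Omega_{ijkl}^*$, derives the key orthogonality and commutation relations (\ref{projt})--(\ref{projt3}) and (\ref{comOmU}) from explicit elements of the reconstructed category, namely the one-block partition in $NC(1,3)$ decorated by the duality fixed vector $\xi\otimes\xi\in\Hom(v;v\otimes\bar v\otimes v)$, shows from these that $(U_{ij})$ is a magic unitary and that every $\mathbb{G}$-fixed vector is fixed for the corresponding word in $V^{(i)}$ (so Tannaka--Krein produces morphisms $C(\mathbb{G})\to C(\mathbb{H})$), and finally checks $\Omega_{ijkl}=V^{(i)}_{kl}U_{ij}=U_{ij}V^{(i)}_{kl}$ so that the universal property yields $\pi_2$ with $\pi_1\circ\pi_2=\id=\pi_2\circ\pi_1$. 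You acknowledge this is ``the main obstacle,'' but acknowledging it is not supplying it; without the specific intertwiners and the resulting relations, the existence of the magic unitary, of the $N$ commuting copies of $C(\mathbb{G})$, and of their compatibility with $\Omega$ is unproven.

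A secondary but genuine error is your closing claim that $N\ge4$ and the linear independence of Remark \ref{theyrefree} are ``essential'' because they make ``dimensions match'' so that $\pi$ is bijective. Linear independence only bounds $\dim\Hom_{\mathbb{H}}$ from below; it gives no upper bound on $\dim\Hom_{\mathbb{G}\wr_*S_N^+}$, which is precisely the unknown quantity, so no dimension count can replace the construction of $\pi_2$. Consistently with this, the theorem carries no hypothesis $N\ge4$ and its proof in the paper uses none; linear independence enters only later, in Corollary \ref{corbas}, to turn the spanning statement into a dimension formula.
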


\begin{proof}
We first prove that $$U^{p,S}\in \Hom_{\mathbb{G}\wr_* S_N^+}(r(\alpha_1)\otimes\dots\otimes r(\alpha_k) ; r(\beta_1)\otimes\dots\otimes r(\beta_l))$$ that is the inclusion of the right hand space (\ref{ncint2}) in the left hand space (\ref{ncint}).

The Frobenius reciprocity for $C^*$-tensor categories with conjugates provide the following isomorphisms: 
\begin{align*}
\Hom_{\mathbb{G}\wr_* S_N^+}(r(\alpha_1)\otimes\dots\otimes r(\alpha_k)&; r(\beta_1)\otimes\dots\otimes r(\beta_l))\\
&\simeq \Hom_{\mathbb{G}\wr_* S_N^+}\left(1; \overline{r(\alpha_1)}\otimes\dots\otimes \overline{r(\alpha_k)}\otimes r(\beta_1)\otimes\dots\otimes r(\beta_l)\right)\\
&\simeq \Hom_{\mathbb{G}\wr_* S_N^+}\left(1 ; r(\bar\alpha_1)\otimes\dots\otimes r(\bar\alpha_k)\otimes r(\beta_1)\otimes\dots\otimes r(\beta_l)\right),
\end{align*}
$$\Hom_{\mathbb{G}}(\alpha_1\otimes\dots\otimes\alpha_k;\beta_1\otimes\dots\otimes\beta_l)\simeq \Hom_{\mathbb{G}}(1;\bar \alpha_1\otimes\dots\otimes\bar\alpha_k\otimes\beta_1\otimes\dots\otimes\beta_l).$$
Hence, one can restrict to prove that
\begin{align}\label{restrict}
t_L^{p}(T_p\otimes\xi)\in \Hom_{\mathbb{G}\wr_* S_N^+}(1;r(\alpha_1)\otimes\dots\otimes r(\alpha_k))
\end{align}
for all $k\in \mathbb{N}$, $p\in NC(k)$ and all fixed vectors $$\xi=\bigotimes_{B} \xi(L_B) : \mathbb{C}\to\bigotimes_{B} H^{L_B}.$$
It is enough to prove (\ref{restrict}) for the one block partition $1_k$ since one can recover any $p\in NC$ by tensor products and compositions of partitions $1_k$ and $\id$.

We now fix $(e_i)_{i=1}^N$ a basis of $\mathbb{C}^N$ and $(Y_j^{\alpha})_{j=1}^{d_{\alpha}}$ a basis of $H^{\alpha}$, for any $\alpha\in \Rep(\mathbb{G})$. Proving $t_L^{1_k}(T_p\otimes \xi)\in \Hom_{\mathbb{G}\wr_* S_N^+}(1;r(\alpha_1)\otimes\dots\otimes r(\alpha_k))$ for some 
$$\xi=\sum_{[j]}\lambda^k_{[j]}Y_{j_1}^{\alpha_1}\otimes\dots\otimes Y_{j_k}^{\alpha_k}\in \Hom_{\mathbb{G}}(1;\alpha_1\otimes\dots\otimes \alpha_k)$$ then follows from the following computation. 
We put $T_{\xi}^{1_k}:=t_L^{1_k}(T_p\otimes\xi)$ and we then have

$$T_{\xi}^{1_k}\equiv\sum_{i,[j]}\lambda_{[j]}^k(e_{i}\otimes Y_{j_{1}}^{\alpha_{1}})\otimes\dots\otimes(e_{i}\otimes Y_{j_{k}}^{\alpha_{k}})$$ so that 
$$r_{\alpha_1}\otimes\dots\otimes r_{\alpha_k}(T_{\xi}^{1_k}\otimes 1)=\sum_{i,[j]}\lambda_{[j]}^k\sum_{[r],[s]}(e_{s_1}\otimes Y_{r_1}^{\alpha_{1}})\otimes\dots\otimes (e_{s_k}\otimes Y_{r_k}^{\alpha_{k}})\otimes \left(u_{s_1i}(\alpha_1)_{r_1j_1}^{(s_1)}\dots u_{s_ki}(\alpha_k)_{r_kj_k}^{(s_k)}\right).$$
But the magic unitary $u$ satisfies for all $s,t,$ $u_{si}u_{ti}=\delta_{st}u_{si}, \sum_iu_{si}=1$ and then combining this with the commuting relations in the free wreath product $C(\mathbb{G})*_wC(S_N^+)$, we get

\begin{align}\label{bigop}
r_{\alpha_1}\otimes\dots\otimes r_{\alpha_k}(T_{\xi}^{1_k}\otimes 1)&=\sum_{[j]}\lambda_{[j]}^k\sum_{[r],s_1}(e_{s_1}\otimes Y_{r_1}^{\alpha_{1}})\otimes\dots\otimes (e_{s_1}\otimes Y_{r_k}^{\alpha_{k}})\otimes \left((\alpha_1)_{r_1j_1}^{(s_1)}\dots (\alpha_k)_{r_kj_k}^{(s_1)}1\right)\nonumber\\
&=\sum_{s_1}\sum_{[j]}\lambda_{[j]}^k\sum_{[r]}(e_{s_1}\otimes Y_{r_1}^{\alpha_{1}})\otimes\dots\otimes (e_{s_1}\otimes Y_{r_k}^{\alpha_{k}})\otimes \left((\alpha_1)_{r_1j_1}^{(s_1)}\dots (\alpha_k)_{r_kj_k}^{(s_1)}\right).
\end{align}

Now, since 
\begin{align}\label{fixx}
\xi=\sum_{[j]}\lambda^k_{[j]}Y_{j_1}^{\alpha_1}\otimes\dots\otimes Y_{j_k}^{\alpha_k}\in \Hom_{\mathbb{G}}(1;\alpha_1\otimes\dots\otimes \alpha_k),
\end{align}

we get applying $(t_L^{1_k})^{-1}$ in (\ref{bigop}), using (\ref{fixx}) and applying once more $t_L^{1_k}$,
\begin{align*}
r_{\alpha_1}\otimes\dots\otimes r_{\alpha_k}(T_{\xi_{1_k}}^{1_k}\otimes 1)&=\sum_{[r],s_1}\lambda_{[r]}^k(e_{s_1}\otimes Y_{r_1}^{\alpha_{1}})\otimes\dots\otimes(e_{s_1}\otimes Y_{r_k}^{\alpha_{k}})\otimes 1\\
&=T_{\xi}^{1_k}\otimes 1.
\end{align*}




We now define $\mathcal{T}$ as the rigid monoidal $C^*$-tensor category generated by the collection of $\mathbb{G}\wr_*S_N^+$-intertwiners spaces $\text{span}\left\{U^{p,S} : p,S \text{ as in } (\ref{ncint2}) \right\}$ between objects indexed by families $[\alpha]$ of $\mathbb{G}$-representations, 
If one applies Woronowicz's Tannaka-Krein duality to this category $\mathcal{T}$, we get a compact matrix quantum group $(\mathbb{H},\Omega)$ generated by a unitary $\Omega$ corresponding to $r(v)\in \mathcal{B}(\mathbb{C}^N\otimes H)\otimes C(\mathbb{H})$ and a family of corepresentations $(R_{\alpha_{i}})_{i\in I}$ such that 
\begin{align*}
\Hom_{\mathbb{H}}(R_{\alpha_{1}}\otimes\dots&\otimes R_{\alpha_{k}};R_{\beta_{1}}\otimes\dots\otimes R_{\beta_{l}})\\
&=\text{span}\left\{U^{p,S} : p,S \text{ as in } (\ref{ncint2})\right\},
\end{align*}
with $p\in NC(k,l)$, $S : \bigotimes_B\alpha(U_B)\to\bigotimes_B\beta(L_B)$, $[\alpha]=(\alpha_{1},\dots,\alpha_{k})$, $\beta=(\beta_{1},\dots,\beta_{l})$.

We proved above that $$U^{p,S}\in \Hom_{\mathbb{G}\wr_*S_N^+}(r(\alpha_{1})\otimes\dots\otimes r(\alpha_{k});r(\beta_{1})\otimes\dots\otimes r(\beta_{l})).$$ In particular, there is by universality a (surjective) morphism $$\pi_1 :C(\mathbb{H})\to C(\mathbb{G}\wr_*S_N^+),\ \ \Omega_{ijkl}\mapsto\omega_{ijkl}.$$ To prove the theorem we shall construct a surjective morphism $\pi_2 : C(\mathbb{G}\wr_*S_N^+)\to C(\mathbb{H})$ such that $$\pi_1\circ\pi_2=\id=\pi_2\circ\pi_1.$$

We define the following elements in $C(\mathbb{H})$ 
\begin{align}\label{defwr}
V_{kl}^{(i)}:=\sum_{j}\Omega_{ijkl}\ \ \text{ and }\ \ U_{ijk}=\sum_{l}\Omega_{ijkl}\Omega_{ijkl}^*.
\end{align}

We shall prove that the generating relations in $C(\mathbb{G}\wr_*S_N^+)$ are also satisfied by the elements $V_{kl}^{(i)}$ and $U_{ijk}$ in $C(\mathbb{H})$.

Since the generating matrix $v$ of $\mathbb{G}$ is unitary, we get that $\xi=\sum_bY_b\otimes \bar Y_b$ is a fixed vector of $v\otimes\bar v$ and thus $\xi\otimes \xi\in \Hom(1;v\otimes\bar v\otimes v\otimes\bar v)\simeq \Hom(v;v\otimes\bar v\otimes v)$. Via this isomorphism, we identify $\xi\otimes\xi$ with $Y\mapsto \sum_c Y_c\otimes \overline{Y_c}\otimes Y$. 

We then have an intertwiner $T:=t_L^p(T_p\otimes \xi\otimes\xi)\in \Hom(\Omega;\Omega\otimes\bar \Omega\otimes \Omega)\subset\mathcal{T}$ with 

\setlength{\unitlength}{0.5cm}
$$
p=\left\{\ \begin{picture}(2.1,2)\thicklines
\put(1,0.3){\line(0,1){1.3}}
\put(0,0.3){\line(1,0){2.028}}
\put(0.029,-1){\line(0,1){1.3}}
\put(1,-1){\line(0,1){1.3}}
\put(2,-1){\line(0,1){1.3}}
\end{picture}\ \right\}
$$

i.e. with Notation \ref{secdia} and making plain the $\mathbb{G}$-morphisms on the block $p$

\setlength{\unitlength}{0.5cm}
$$
P=\left\{\ \begin{picture}(1.8,2)\thicklines
\put(1,0.3){\line(0,1){1.3}}
\put(0.7,1.7){$v$}
\put(0.7,0.1){\line(0,1){1.5}}
\put(0.7,0.126){\line(1,0){1}}
\put(0,0.3){\line(1,0){2.028}}
\put(-0.3,-0.1){\line(1,0){1}}
\put(0.029,-1){\line(0,1){1.3}}
\put(-0.3,-1.5){$v$}
\put(-0.3,-1){\line(0,1){0.926}}
\put(1,-1){\line(0,1){1.3}}
\put(0.7,-1.5){$\bar v$}
\put(0.7,-1){\line(0,1){0.926}}
\put(2,-1){\line(0,1){1.3}}
\put(1.7,-1.5){$v$}
\put(1.7,-1){\line(0,1){1.155}}
\end{picture}\ \right\}
$$

that is
$$T(Y\otimes e_a)=\sum_{c}(e_a\otimes Y_c)\otimes\overline{(e_a\otimes Y_c)}\otimes( e_a\otimes Y).$$ We obtain for all $a=1,\dots,N$ and $b=1,\dots,d_{\mathbb{G}}$:
\begin{align*}
&\sum_{[i],[k],c}(e_{i_1}\otimes Y_{k_1})\otimes(e_{i_2}\otimes Y_{k_2})\otimes(e_{i_3}\otimes Y_{k_3})\otimes \Omega_{i_1ak_1c}\ \Omega_{i_2ak_2c}^*\ \Omega_{i_3ak_3b}\\
&=\sum_{i,k,r}(e_i\otimes Y_r)\otimes(e_i\otimes Y_r)\otimes (e_i\otimes Y_k)\otimes \Omega_{iakb}
\end{align*}

so that for all $[i]\in \{1,\dots,N\}^3$, $[k]\in\{1,\dots,d_{\mathbb{G}}\}^2$, $a\in\{1,\dots,N\}$ and $b\in\{1,\dots,d_{\mathbb{G}}\}$:

\begin{align}\label{projt}
\left(\sum_{c} \Omega_{i_1ak_1c}\ \Omega_{i_2ak_2c}^*\right) \Omega_{i_3ak_3b}
&=\delta_{i_1,i_2,i_3}\delta_{k_1,k_2}\Omega_{i_3ak_3b}.
\end{align}

and taking adjoints:
\begin{align}\label{projt1}
\Omega_{i_3ak_3b}^*\left(\sum_{c} \Omega_{i_2ak_2c}\ \Omega_{i_1ak_1c}^*\right)
&=\delta_{i_1,i_2,i_3}\delta_{k_1,k_2}\Omega_{i_3ak_3b}^*.
\end{align}

Considering now 

\setlength{\unitlength}{0.5cm}
$$
P'=\left\{\ \begin{picture}(2.1,2)\thicklines
\put(1,0.3){\line(0,1){1.3}}
\put(0.7,1.75){$v$}
\put(0.7,0.1){\line(0,1){1.5}}
\put(-0.3,0.126){\line(1,0){1}}
\put(0,0.3){\line(1,0){2.028}}
\put(0.7,-0.1){\line(1,0){1}}
\put(0.029,-1){\line(0,1){1.3}}
\put(-0.35,-1.5){$v$}
\put(0.7,-1){\line(0,1){0.926}}
\put(0.7,-1.5){$v$}
\put(1,-1){\line(0,1){1.3}}
\put(1.7,-1){\line(0,1){0.926}}
\put(1.7,-1.5){$\bar v$}
\put(2,-1){\line(0,1){1.3}}
\put(-0.3,-1){\line(0,1){1.155}}
\end{picture}\ \right\},
$$
we can get the same way, for all $[i]\in \{1,\dots,N\}^3$, $[k]\in\{1,\dots,d_{\mathbb{G}}\}^2$, $a\in\{1,\dots,N\}$ and $b\in\{1,\dots,d_{\mathbb{G}}\}$, using $t_L^p(T_{p'}\otimes \xi\otimes\xi)\in \Hom(\Omega;\Omega\otimes\Omega\otimes\bar\Omega)\subset\mathcal{T}$,
\begin{align}\label{projt2}
\Omega_{i_3ak_3b}\left(\sum_{c} \Omega_{i_1ak_1c}\ \Omega_{i_2ak_2c}^*\right)
&=\delta_{i_1,i_2,i_3}\delta_{k_1,k_2}\Omega_{i_3ak_3b},
\end{align}
and taking adjoints:
\begin{align}\label{projt3}
\left(\sum_{c} \Omega_{i_2ak_2c}\ \Omega_{i_1ak_1c}^*\right)\Omega_{i_3ak_3b}^*
&=\delta_{i_1,i_2,i_3}\delta_{k_1,k_2}\Omega_{i_3ak_3b}^*.
\end{align}

We shall obtain from (\ref{projt}), (\ref{projt1}), (\ref{projt2}), (\ref{projt3}) all the necessary relations in $C(\mathbb{H})$ to build back the free wreath product $\mathbb{G}\wr_*S_N^+$. 

From these relations, we see in particular that the elements $U_{ijk}=\sum_{c}\Omega_{ijkc}\Omega_{ijkc}^*$ do not depend on $k$ since
\begin{align*}
U_{ijk}U_{ijk'}&=\sum_{c,d}\Omega_{ijkc}\Omega_{ijkc}^*\Omega_{ijk'd}\Omega_{ijk'd}^*\\
&=\sum_d\left(\sum_c\Omega_{ijkc}\Omega_{ijkc}^*\Omega_{ijk'd}\right)\Omega_{ijk'd}^*\\
&=\sum_d\Omega_{ijk'd}\Omega_{ijk'd}^*=U_{ijk'} &&\text{ (by (\ref{projt}))},\\
\end{align*}
and similarly $U_{ijk}U_{ijk'}=U_{ijk}$, using (\ref{projt2}). We then obtain $U_{ijk}=U_{ijk'}$. We fix $k$ and set $U_{ij}:=U_{ijk}$. Notice that the case $k=k'$ above shows that $U_{ij}$ is an orthogonal projection (the relation $U_{ij}^*=U_{ij}$ is clear). 
In fact, the matrix $(U_{ij})$ is a magic unitary, since it is a unitary whose entries are orthogonal projections.
We now prove that for all $i=1,\dots,N$ and all $\epsilon_j,\epsilon_k'\in\{1,*\},$
$$\Hom_{\mathbb{G}}\left(v^{\epsilon_1}\otimes\dots\otimes v^{\epsilon_k};v^{\epsilon'_1}\otimes\dots\otimes v^{\epsilon'_l})\subset \Hom_{\mathbb{H}_i}(V^{(i)\epsilon_1}\otimes\dots\otimes V^{(i)\epsilon_k};V^{(i)\epsilon'_1}\otimes\dots\otimes V^{(i)\epsilon'_l}\right),$$
where $\mathbb{H}_{i}$ is the compact matrix quantum groups whose underlying Woronowicz-$C^*$-algebra is generated by the coefficients of $V^{(i)}$.
By Frobenius reciprocity, it is enough to prove that any fixed vector in $\mathbb{G}$ is fixed in $\mathbb{H}_i$.

If $\xi_{k}=\sum_{[j]}\lambda_{[j]}Y_{j_1}\otimes\dots\otimes Y_{j_k}\in \Hom(1;v^{\epsilon_1}\otimes\dots\otimes v^{\epsilon_k})$, we have:
\begin{align*}
\sum_{[r][j]}\lambda_{[j]}Y_{r_1}\otimes\dots\otimes Y_{r_k}\otimes v_{r_1j_1}^{\epsilon_1}\dots v_{r_kj_k}^{\epsilon_k}=\sum_{[r]}\lambda_{[r]}Y_{r_1}\otimes\dots\otimes Y_{r_k}\otimes 1,
\end{align*}

i.e. $\forall [r]\in \{1,\dots,d_{\mathbb{G}}\}^k$, we have the relations in $C(\mathbb{G})$:

\begin{align}\label{transfer}
\sum_{[j]} \lambda_{[j]}v_{r_1j_1}^{\epsilon_1}\dots v_{r_kj_k}^{\epsilon_k}=\lambda_{[r]}.
\end{align}

Now, we use the morphism $(t_L^p)^*\circ(T_p\otimes \xi_k)\in \mathcal{T}$, with $p=1_k\in NC(k)$ i.e.  
\begin{align*}
(t_L^p)^*\circ(\xi_k\otimes T_p)&=\sum_{i[j]}\lambda_{[j]}(e_{i}\otimes Y_{j_1})\otimes\dots\otimes(e_{i}\otimes Y_{j_k})\\
&\in \Hom(1;\Omega^{\epsilon_1}\otimes\dots\otimes\Omega^{\epsilon_k})\subset \mathcal{T}.
\end{align*}

We get
\begin{align}
\sum_{[r][t]}(e_{r_1}\otimes Y_{t_1})&\otimes\dots\otimes(e_{r_k}\otimes Y_{t_k})\otimes\sum_{i[j]}\lambda_{[j]}\Omega_{r_1it_1j_1}^{\epsilon_1}\dots\Omega_{r_kit_kj_k}^{\epsilon_k}\label{relV}\\
=&\sum_{r[t]}\lambda_{[t]}(e_{r}\otimes Y_{t_1})\otimes\dots\otimes(e_{r}\otimes Y_{t_k})\otimes1\label{relV1}.
\end{align}

Notice that the relations (\ref{projt}), (\ref{projt1}), (\ref{projt2}), (\ref{projt3}) yield for $\epsilon=1,*$ and all $i,j,k,l$:

\begin{align}\label{comOmU}
U_{ij}\Omega_{ijkl}^{\epsilon}=\Omega_{ijkl}^{\epsilon}=\Omega_{ijkl}^{\epsilon}U_{ij}.
\end{align}

Then using these commuting relations and the fact that $(U_{ij})$ is a magic unitary, we get from (\ref{relV}):
\begin{align*}
\sum_{[r][t]}(e_{r_1}\otimes Y_{t_1})&\otimes\dots\otimes(e_{r_k}\otimes Y_{t_k})\otimes\sum_{i[j]}\lambda_{[j]}\Omega_{r_1it_1j_1}^{\epsilon_1}\dots\Omega_{r_kit_kj_k}^{\epsilon_k}\\
&=\sum_{[r][t]}(e_{r_1}\otimes Y_{t_1})\otimes\dots\otimes(e_{r_k}\otimes Y_{t_k})\otimes\sum_{i[j]}\lambda_{[j]}(\Omega_{r_1it_1j_1}^{\epsilon_1}U_{r_1i})\dots(\Omega_{r_kit_kj_k}^{\epsilon_k}U_{r_ki})\\
&=\sum_{r_1[t]}(e_{r_1}\otimes Y_{t_1})\otimes\dots\otimes(e_{r_1}\otimes Y_{t_k})\otimes\sum_{i[j]}\lambda_{[j]}(\Omega_{r_1it_1j_1}^{\epsilon_1}\dots\Omega_{r_1it_kj_k}^{\epsilon_k})(U_{r_1i}\dots U_{r_1i})\\
&=\sum_{r_1[t]}(e_{r_1}\otimes Y_{t_1})\otimes\dots\otimes(e_{r_1}\otimes Y_{t_k})\otimes\sum_{[i][j]}\lambda_{[j]}(\Omega_{r_1it_1j_1}^{\epsilon_1}\dots\Omega_{r_1it_kj_k}^{\epsilon_k})(U_{r_1i_1}\dots U_{r_1i_k})\\
&=\sum_{r_1[t]}(e_{r_1}\otimes Y_{t_1})\otimes\dots\otimes(e_{r_1}\otimes Y_{t_k})\otimes\sum_{[i][j]}\lambda_{[j]}(\Omega_{r_1i_1t_1j_1}^{\epsilon_1}U_{r_1i_1})\dots(\Omega_{r_1i_kt_kj_k}^{\epsilon_k}U_{r_1i_k})\\
&=\sum_{r_1[t]}(e_{r_1}\otimes Y_{t_1})\otimes\dots\otimes(e_{r_1}\otimes Y_{t_k})\otimes\sum_{[j]}\lambda_{[j]} V^{(r_1)\epsilon_1}_{t_1j_1}\dots V^{(r_1)\epsilon_k}_{t_kj_k}.\\
\end{align*}

Hence with (\ref{relV1}), we obtain $\forall [t]\in \{1,\dots,d_{\mathbb{G}}\}^k$

$$\sum_{[j]}\lambda_{[j]}V_{t_1j_1}^{(r_1)\epsilon_1}\dots V_{t_kj_k}^{(r_1)\epsilon_k}=\lambda_{[t]},$$

so that $\xi_{k}=\sum_{[j]}\lambda_{[j]}Y_{j_1}\otimes\dots\otimes Y_{j_k}\in \Hom_{\mathbb{H}_r}\left(1;V^{(r)\epsilon_1}\otimes\dots\otimes V^{(r)\epsilon_k}\right)$ for all $r=1,\dots,N$.

Then, we obtain that $\Rep(\mathbb{G})\subset \Rep(\mathbb{H}_i)\subset \Rep(\mathbb{H})$ as full sub-categories. Woronowicz's Tannaka-Krein duality theorem then implies that for all $i=1,\dots,N$ there exists a morphism $$\pi_i : C(\mathbb{G})\to C(\mathbb{H}_i)\subset C(\mathbb{H})$$ sending $v$ to $V^{(i)}$. 


Now, we prove that $V_{kl}^{(i)}U_{ij}=\Omega_{ijkl}=U_{ij}V_{kl}^{(i)}$. This follows from (\ref{comOmU}): 
\begin{align*}
V_{kl}^{(i)}U_{ij}=\sum_{J}\Omega_{iJkl}U_{ij}=\Omega_{ijkl}U_{ij}=\Omega_{ijkl}
\end{align*}
and similarly
\begin{align*}
U_{ij}V_{kl}^{(i)}=\Omega_{ijkl}.
\end{align*}
It follows from what we have proved above that there exist morphisms
\begin{enumerate}
\item[$\bullet$] $\pi_i : C(\mathbb{G})\to C(\mathbb{H}_i)$ such that $\pi_i\left(v_{kl}^{(i)}\right)=V_{kl}^{(i)}$, for all $i=1,\dots,N$,
\item[$\bullet$] $\pi_{N+1} : C(S_N^+)\to C(\mathbb{H})$ such that $\pi_{N+1}(u_{ij})=U_{ij}$.
\end{enumerate}
Thanks to the commuting relations we obtained above, these morphisms induce a morphism $\pi_2 : C(\mathbb{G}\wr_*S_N^+)\to C(\mathbb{H})$, such that $\pi_2\left(v_{kl}^{(i)}u_{ij}\right)=V_{kl}^{(i)}U_{ij}$. By construction, we then get $\pi_1\circ\pi_2=\id=\pi_2\circ\pi_1$ and the proof is complete.


\end{proof}

\begin{rque}\label{classical}
In the case where $\mathbb{G}$ is the dual of a discrete (classical) group $\mathbb{G}=\widehat{\Gamma}$, we recover the results of \cite{BV09} and \cite{Lem13b}. Indeed, in this case, the irreducible corepresentations of $\mathbb{G}=(C^*(\Gamma),\Delta)$ are the one-dimensional group like corepresentations $\Delta(g)=g\otimes g, g\in\Gamma$, the trivial one is the neutral element $e$ and the tensor product of two irreducible corepresentations is their product in $\Gamma$. 
Any morphism $$S_{[g],[h]} : \mathbb{C}\simeq\mathbb{C}^{\otimes k}\to\mathbb{C}^{\otimes l}\simeq\mathbb{C}, g_1\dots g_k\to h_1\dots h_l$$ is determined by the image of $1\in\mathbb{C}$ and the tensor products $S_{[g],[h]}\otimes T_p$ are scalar multiplication of the linear maps $T_p$. The space $$Hom_{\widehat{\Gamma}\wr_*S_N^+}(r(g_1)\otimes\dots\otimes r(g_k);r(h_1)\otimes\dots\otimes r(h_l))$$ is generated by the maps $T_p$ where $p\in NC(k,l)$ is an admissible diagram in $NC_{\widehat{\Gamma}}$ as in Definition \ref{admis}. In this setting, $p$ is admissible if $p\in NC(k,l)$ has the additional rules that if one decorates the points of $p$ by the elements $g_i, h_j$ then in each block, the product on top is equal to the product on bottom in $\Gamma$.
\end{rque}

In the sequel, we denote by $1_{\mathbb{G}}$ the trivial $\mathbb{G}$-representation and simply by $1$ the one of $\mathbb{G}\wr_*S_N^+$.

\begin{crl}\label{corbas} Let $N\ge4$, then:
\begin{enumerate}
\item For all $\alpha_1,\dots,\alpha_k,\beta_1,\dots,\beta_l\in \text{Rep}(\mathbb{G})$, we have 
\begin{align*}
\dim\Hom_{\mathbb{G}\wr_*S_N^+}&(r(\alpha_1)\otimes\dots\otimes r(\alpha_k);r(\beta_1)\otimes\dots\otimes r(\beta_l))\\
&=\sum_{p\in NC_{\mathbb{G}}([\alpha],[\beta])}\prod_{B\in p}\dim\Hom_{\mathbb{G}}(\alpha(U_B),\beta(L_B)).
\end{align*}
\item If $\alpha\in \Irr(\mathbb{G})$ is non-equivalent to $1_{\mathbb{G}}$ then $r(\alpha)$ is an irreducible $\mathbb{G}\wr_*S_N^+$-representation.
\item $r(1_{\mathbb{G}})=(u_{ij})=1\oplus \omega(1_{\mathbb{G}})$ for some $\omega(1_{\mathbb{G}})\in \Irr(\mathbb{G}\wr_*S_N^+)$.
\item Denoting $\omega(\alpha):=r(\alpha)\ominus\delta_{\alpha,1_{\mathbb{G}}}1$ then $(\omega(\alpha))_{\alpha\in \Irr(\mathbb{G})}$ is a family of pairwise non-equivalent $\mathbb{G}\wr_*S_N^+$-irreducible corepresentations.
\end{enumerate}
\end{crl}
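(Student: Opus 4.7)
The plan is to derive all four statements from the explicit description of intertwiner spaces in Theorem \ref{bigthm}, combined with the linear independence properties guaranteed for $N \ge 4$.

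For (1), the dimension formula, the key observation is that by Proposition \ref{tensorcat} the maps $T_p$ with $p \in NC(k,l)$ are linearly independent when $N \ge 4$, and by Remark \ref{theyrefree}, when the morphisms $S_B$ decorating each block $B$ run over a basis of $\Hom_{\mathbb{G}}(\alpha(U_B),\beta(L_B))$, the family $(T_p \otimes S)_{p,S}$ is free. The unitaries $t_U^p$ and $t_L^p$ in Notation \ref{cantwist} are isomorphisms, so the twisted maps $U^{p,S}$ inherit this linear independence. Hence the dimension of $\Hom_{\mathbb{G}\wr_*S_N^+}(r(\alpha_1)\otimes\cdots\otimes r(\alpha_k); r(\beta_1)\otimes\cdots\otimes r(\beta_l))$ equals the number of pairs $(p,S)$ ranging over admissible partitions and bases of block-morphisms, which gives exactly $\sum_{p} \prod_B \dim\Hom_{\mathbb{G}}(\alpha(U_B),\beta(L_B))$.

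For (2), I would apply the formula with $k = l = 1$ and both tuples equal to $\alpha$. The set $NC(1,1)$ contains two partitions: the one-block partition $|$, and the disjoint singletons. If $\alpha \in \Irr(\mathbb{G}) \setminus \{1_{\mathbb{G}}\}$, then $\Hom_{\mathbb{G}}(\alpha, 1_{\mathbb{G}}) = 0$ by Schur's lemma, so the singleton partition is not admissible (its upper singleton block requires a morphism $\alpha \to 1_{\mathbb{G}}$). Only $|$ contributes, giving $\dim \Hom_{\mathbb{G}}(\alpha,\alpha) = 1$. Thus $\dim \End(r(\alpha)) = 1$ and $r(\alpha)$ is irreducible.

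For (3) and (4), I would apply the same formula with $\alpha = \beta = 1_{\mathbb{G}}$: both partitions of $NC(1,1)$ are admissible and each contributes $1$, so $\dim \End(r(1_{\mathbb{G}})) = 2$. Since the decomposition multiplicities satisfy $\sum m_i^2 = 2$, the corepresentation $r(1_{\mathbb{G}})$ must split as a sum of two inequivalent irreducibles. A separate application of the formula to $\dim \Hom(1, r(1_{\mathbb{G}}))$ (using $NC(0,1)$, which has one element) shows that the trivial corepresentation appears with multiplicity one, so $r(1_{\mathbb{G}}) = 1 \oplus \omega(1_{\mathbb{G}})$ with $\omega(1_{\mathbb{G}})$ irreducible and non-trivial, establishing (3). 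For pairwise non-equivalence in (4), take distinct $\alpha, \beta \in \Irr(\mathbb{G})$ and compute $\dim \Hom(r(\alpha), r(\beta))$ similarly: both contributions (the connecting block giving $\dim\Hom_{\mathbb{G}}(\alpha,\beta) = 0$ and the two singletons giving a product involving a zero factor as soon as one of $\alpha,\beta$ is non-trivial) vanish, so $\Hom(r(\alpha), r(\beta)) = 0$. The case where one of them is $1_{\mathbb{G}}$ is handled analogously, and then subtracting the trivial summand from $r(1_{\mathbb{G}})$ yields the non-equivalence between $\omega(1_{\mathbb{G}})$ and the remaining $\omega(\alpha) = r(\alpha)$. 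No step presents a substantial obstacle beyond a careful bookkeeping of admissibility conditions; the main subtlety is being attentive to singletons whose admissibility depends on $\Hom_{\mathbb{G}}(\cdot, 1_{\mathbb{G}})$ vanishing for non-trivial irreducibles.
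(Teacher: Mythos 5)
Your proposal is correct and follows essentially the same route as the paper: both rest on Theorem \ref{bigthm} together with the linear independence of the maps $T_p$ (hence of the $U^{p,S}$) for $N\ge 4$, and both settle (2)--(4) by checking admissibility of the two candidate diagrams in $NC(1,1)$ via Schur's lemma. Your extra computation of $\dim\Hom(1,r(1_{\mathbb{G}}))$ using the single partition in $NC(0,1)$ is a minor (and welcome) addition making the decomposition $r(1_{\mathbb{G}})=1\oplus\omega(1_{\mathbb{G}})$ explicit, but it does not change the nature of the argument.
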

\begin{proof}
We use Theorem \ref{bigthm} and the independence of the linear maps $$T_p\in \mathcal{B}((\mathbb{C}^N)^{\otimes k},(\mathbb{C}^N)^{\otimes l}),\ p\in NC(k,l)$$ for all $N\ge4$. The first assertion follows from this linear independence of the maps $T_p$. Indeed, we have
\begin{align*}
\Hom_{\mathbb{G}\wr_*S_N^+}&(r(\alpha_1)\otimes\dots\otimes r(\alpha_k);r(\beta_1)\otimes\dots\otimes r(\beta_l))\\
&=\bigoplus_{p\in NC_{\mathbb{G}}([\alpha],[\beta])}\text{span}\left\{U^{p,S} : \forall B, S_B\in\Hom_{\mathbb{G}}(\alpha(U_B),\beta(L_B))\right\}
\end{align*}
and the first assertion follows by computing the dimension of the spaces of each side.

Now we prove simultaneously the last three relations. For $\alpha,\beta\in \Irr(\mathbb{G})$, the intertwiner space $$\Hom_{\mathscr{\mathbb{G}}\wr_*S_N^+}(r(\alpha),r(\beta))$$ is encoded by the following candidate diagrams:

\setlength{\unitlength}{0.5cm}
$$
p_1=\left\{\ \begin{picture}(0.2,1.5)\thicklines
\put(0.1,-0.25){\line(0,1){0.9}}
\put(-0.1,0.9){$\alpha$}
\put(-0.1,-0.9){$\beta$}
\end{picture}\ \right\} 
\text{ and }\ 
\setlength{\unitlength}{0.5cm}
p_2=\left\{\ \begin{picture}(0.2,1.5)\thicklines
\put(0.15,-0.25){\line(0,1){0.3}}
\put(0.15,0.35){\line(0,1){0.3}}
\put(-0.1,0.9){$\alpha$}
\put(-0.1,-0.9){$\beta$}
\end{picture}\ \right\}.
$$

Since $\alpha$ and $\beta$ are irreducible, we see that $p_1$ is an admissible diagram if and only if $\alpha\simeq\beta$ and $p_2$ is admissible if and only if $\alpha\simeq\beta\simeq1_{\mathbb{G}}$. 

Therefore, if $\alpha$ is not equivalent to $\beta$: $$\text{dim } \Hom_{\mathscr{\mathbb{G}}\wr_*S_N^+}(r(\alpha),r(\beta))=0.$$

If $\alpha\simeq\beta$ are not the trivial corepresentation $1_{\mathbb{G}}$ then the only intertwiner $r(\alpha)\to r(\beta)$ arises from $p_1$:
$$\text{dim } \Hom_{\mathscr{\mathbb{G}}\wr_*S_N^+}(r(\alpha),r(\alpha))=1.$$

If $\alpha\simeq\beta\simeq1_{\mathbb{G}}$, then the diagram $p_2$ also gives rise to an intertwiner $U_{(1_\mathbb{G}),(1_{\mathbb{G}})}^{p_2,S}$ with $S : 1_\mathbb{G}\to1_{\mathbb{G}}$ the trivial inclusion. 
The independence of $T_{\{|\}}=\id_{\mathbb{C}^N}$ and $T_{{\setlength{\unitlength}{0.2cm}
\left\{\ \begin{picture}(0.2,0.3)\thicklines
\put(0.1,-0.45){\line(0,1){0.5}}
\put(0.1,0.45){\line(0,1){0.5}}
\end{picture}\ \right\}}}$
allows to conclude
$$\text{dim } \Hom_{\mathscr{\mathbb{G}}\wr_*S_N^+}(r(1_{\mathbb{G}}),r(1_{\mathbb{G}}))=2.$$



\end{proof}

\section{The free probability of free wreath product quantum groups}
We provide here some probabilistic consequences of the description of the intertwiner spaces of $\mathbb{G}\wr_{*} S_N^{+}$. In this section we are mainly interested in the non-commutative probability space arising from the Haar state on $C(\mathbb{G}\wr_{*} S_N^{+})$ and the behavior of the coefficients of a corepresentation as random variables in this setting. Since most of the results involve the law of free compound poisson laws, we shall recall its definition. We refer to \cite{nica2006lectures} for an introductory course on non-commutative variables.

\subsection{Laws of characters}\label{char}

\begin{nota} In the sequel $\epsilon=\epsilon(1)\dots\epsilon(r)$ denotes a word in $\lbrace 1,*\rbrace$ and $NC(\epsilon)$ is the set of noncrossing partitions with each endpoint $i$ colored with $\epsilon(i)$. For $p\in NC(\epsilon)$ and $B$ a block of $p$, $\epsilon(B)$ denotes the subword of $\epsilon$ coming from the points in the block $B$ (with the same order as in $p$). 

Let $(A,\phi)$ be a noncommutative probability space, $X$ an element of $A$ with $*-$distribution $\mu_{X}$ depicted by all of its moments 
$$m_{X}(\epsilon)=\phi(X^{\epsilon(1)}\dots X^{\epsilon(r)}).$$
Similarly as in \eqref{cldef3}, the free cumulants of $X$, $\lbrace k_{X}(\epsilon)\rbrace_{\epsilon}$ is the unique collection of complex numbers such that the following moment-cumulant formula holds for all $\epsilon$ :
$$m_{X}(\epsilon)=\sum_{p\in NC(\epsilon)}\prod_{B}k_{X}(\epsilon(B)).$$
The existence and unicity of such a collection is easily proven by recurrence on the length of $\epsilon$ \cite{nica2006lectures}.
\end{nota}
\begin{defi}
The free compound poisson distribution $\mathcal{P}_{\lambda}(\mu_{X})$ with initial law $\mu_{X}$ and parameter $\lambda>0$ is the $\star-$distribution defined by its free cumulants 
\begin{equation}
k_{\mathcal{P}_{\lambda}(\mu_{X})}(\epsilon)=\lambda m_{X}(\epsilon).
\label{defcompoisson}
\end{equation}
\end{defi}
In particular, if $Y$ is a random variable following a free compound poisson distribution with initial law $\mu_{X}$ and parameter $1$, then we have the following moment formula :
$$m_{Y}(\epsilon)=\sum_{p\in NC(\epsilon)}\prod_{B}m_{X}(\epsilon({B})).$$
We refer to \cite{nica2006lectures} for the proof that there exists actually a propability space and a random variable on it with such a distribution.\\
The first result is a direct application of the Corollary \ref{corbas}. We refer to Definition \ref{defralpha} for the definition of the corepresentation $r(\alpha)$.
\begin{prp}
Let $\mathbb{G}$ be a compact quantum group of Kac type, $\alpha\in\Rep(\mathbb{G})$, $n\geq 4$. Then the law of the character $\chi(r(\alpha))$, with respect to the Haar state $h$, is a free compound poisson with initial law $\chi(\alpha)$ and parameter $1$.
\end{prp}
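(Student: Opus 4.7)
The plan is to compute the $*$-moments of $\chi(r(\alpha))$ directly via Corollary~\ref{corbas}(1) and compare them with the defining formula of the free compound Poisson distribution.

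First I would translate moments of characters into dimensions of intertwiner spaces. Since $\mathbb{G}\wr_*S_N^+$ is of Kac type, the Haar state is tracial and for every unitary corepresentation $v$ one has $h(\chi_v)=\dim\Hom(1,v)$; combined with multiplicativity $\chi_{v\otimes w}=\chi_v\chi_w$ and $\chi_v^*=\chi_{\bar v}$, this gives
\begin{equation*}
m_{\chi(r(\alpha))}(\epsilon)=h\bigl(\chi(r(\alpha))^{\epsilon(1)}\cdots\chi(r(\alpha))^{\epsilon(r)}\bigr)=\dim\Hom\bigl(1,r(\alpha)^{\epsilon(1)}\otimes\cdots\otimes r(\alpha)^{\epsilon(r)}\bigr).
\end{equation*}
A short check shows $\overline{r(\alpha)}=r(\bar\alpha)$ (since $u_{ij}=u_{ij}^*$ in $C(S_N^+)$), so $r(\alpha)^{\epsilon(i)}=r(\alpha^{\epsilon(i)})$ and the argument of $\Hom$ becomes $r(\alpha^{\epsilon(1)})\otimes\cdots\otimes r(\alpha^{\epsilon(r)})$.

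Next I would apply Corollary~\ref{corbas}(1) with $k=0$, $l=r$, trivial upper decoration and lower decoration $[\beta]=(\alpha^{\epsilon(1)},\dots,\alpha^{\epsilon(r)})$. The partitions $NC_{\mathbb{G}}(\emptyset,[\beta])$ are in natural bijection with $NC(\epsilon)$: each partition $p$ carries the decoration coming from $\epsilon$ and the admissibility of a block $B$ is automatic (non-admissible blocks contribute a zero factor). Hence
\begin{equation*}
m_{\chi(r(\alpha))}(\epsilon)=\sum_{p\in NC(\epsilon)}\prod_{B\in p}\dim\Hom_{\mathbb{G}}\Bigl(1,\bigotimes_{i\in B}\alpha^{\epsilon(i)}\Bigr).
\end{equation*}
Applying once more the identity $\dim\Hom_{\mathbb{G}}(1,w)=h(\chi_w)$ to each block, the right-hand side becomes $\sum_{p\in NC(\epsilon)}\prod_{B\in p}m_{\chi(\alpha)}(\epsilon(B))$, which is exactly the moment formula for $\mathcal{P}_1(\mu_{\chi(\alpha)})$ recalled just before the statement.

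I do not anticipate a serious obstacle here: everything is a direct consequence of the main combinatorial description together with the Kac-type identification of character moments with dimensions of fixed-vector spaces. The only point that requires some care is the identification of decorated non-crossing partitions in $NC_{\mathbb{G}}(\emptyset,[\beta])$ with the set $NC(\epsilon)$ used to define the free compound Poisson cumulants, and the fact that $N\ge 4$ is used only through Corollary~\ref{corbas}(1) (via the linear independence of the $T_p$).
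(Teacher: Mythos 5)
Your proposal is correct and follows essentially the same route as the paper: both reduce the $*$-moments of $\chi(r(\alpha))$ to $\dim\Hom_{\mathbb{G}\wr_*S_N^+}(1,r(\alpha)^{\epsilon(1)}\otimes\cdots\otimes r(\alpha)^{\epsilon(r)})$, apply Corollary \ref{corbas}(1) to write this as a sum over (decorated) non-crossing partitions of products of $\dim\Hom_{\mathbb{G}}(1,\alpha(L_B))=m_{\chi(\alpha)}(\epsilon(B))$, and match the result with the moment formula of the free compound Poisson law with parameter $1$. The only difference is that you make explicit a couple of points the paper leaves implicit (the identity $\overline{r(\alpha)}=r(\bar\alpha)$ and the identification of decorated partitions with $NC(\epsilon)$, non-admissible blocks contributing zero), which is fine.
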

\begin{proof}
Let $\epsilon$ be a word in $\lbrace 1,\star\rbrace$. Then the law of a free compound poisson with initial law $\chi(\alpha)$ and parameter $1$, $\mathcal{P}(\chi(\alpha))$ is described by its free cumulants, with the formula \eqref{defcompoisson}:
$$k_{\mathcal{P}(\chi(\alpha))}(\epsilon(1)\dots\epsilon(r))=m_{\chi(\alpha)}(\epsilon(1)\dots\epsilon(r).$$
With the moment-cumulant formula, this is equivalent to the following expression for the moments of $\mathcal{P}(\chi(\alpha))$:
$$m_{\mathcal{P}(\chi(\alpha))}=\sum_{p\in NC_{\epsilon}}\prod_{B}m_{\chi(\alpha)}(\epsilon(B)).$$
By the Corollary \ref{corbas} we have  
\begin{align*}
h\left(\chi(r(\alpha)^{\epsilon(1)}\dots\chi_{n}(r(\alpha))^{\epsilon(r)}\right)
=&\dim\Hom_{\mathbb{G}\wr_*S_N^+}(1;r(\alpha)^{\epsilon(1)}\otimes\dots\otimes r(\alpha)^{\epsilon(r)})\\
=&\sum_{p\in NC_{\epsilon}}\prod_{B}\dim\Hom_{\mathbb{G}}(1,\alpha(L_B))\\
=&\sum_{p\in NC_{\epsilon}}\prod_{B}m_{\chi(\alpha)}(\epsilon(B)).\\
\end{align*}
The second equality is given by Corollay \ref{corbas}, and the third one by the definition of $\alpha(L_{B})$ and the tensor product structure.
\end{proof}
A consequence of this result is a partial answer to the free product conjecture stated by Banica and Bichon (see \cite{banica2007free}) : for each compact matrix quantum group $(A,v)$ we denote by $\mu(A,v)$ the law of the character of the fundamental representation with respect to the Haar measure. A quantum permutation group is a quantum subgroup of $S_{N}^{+}$ for some $N\geq 0$, in the following sense : that is a compact matrix quantum group $(A,v)$ such that there exists a surjective $C^{*}-$morphism $\Phi:C(S_{N}^{+})\rightarrow A$ sending the elements $u_{ij}$ of $C(S_{N}^{+})$ to $v_{ij}$ (see \cite{banica78quantum} for a survey on the subject).
\begin{crl}
Let $(A,v)$ be a quantum permutation group, and $S_N^{+}=(C(S_{N}^{+},u)$, $n\geq 4$. Then 
$$\mu(A\wr_{*} B,w)=\mu(A,v)\boxtimes\mu(C(S_{N}^{+},u).$$
\end{crl}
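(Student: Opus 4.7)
The plan is to reduce this to three already-available ingredients: the free-probability content of the preceding proposition, a known computation for $S_N^+$, and a standard identity from free probability relating multiplicative convolution by a free Poisson to compound free Poisson laws.

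First, I would identify the fundamental corepresentation $w$ of $A\wr_* S_N^+$ with $r(v)=(u_{ij}v_{kl}^{(i)})$ in the notation of Definition \ref{defralpha}. The preceding proposition, applied to $\alpha=v$, then gives directly that $\chi(w)=\chi(r(v))$ has distribution $\mathcal{P}_1(\mu(A,v))$, the free compound Poisson with initial law $\mu(A,v)$ and parameter $1$.

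Next, I would invoke the classical result of Banica (proved via the Temperley--Lieb description of $\mathrm{End}(u^{\otimes k})$) that for $N\ge 4$ the law $\mu(S_N^+,u)$ of the character $\chi(u)$ under the Haar state is the free Poisson (Marchenko--Pastur) distribution $\pi_1$ of parameter $1$. Equivalently, $\pi_1$ is itself the free compound Poisson with initial law $\delta_1$ and parameter $1$.

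Finally, I would apply the standard identity in free probability (see e.g.\ Nica--Speicher, Lecture 12): if $\mu$ is a compactly supported probability measure on $\R$, then
\[
\pi_1\boxtimes\mu \;=\; \mathcal{P}_1(\mu).
\]
At the level of free cumulants this is transparent: the free cumulants of $\pi_1$ are all equal to $1$, and the formula for free cumulants of a multiplicative free convolution then yields $k_{\pi_1\boxtimes\mu}(\epsilon)=m_\mu(\epsilon)$, which is precisely the cumulant description \eqref{defcompoisson} of $\mathcal{P}_1(\mu)$. Chaining the three steps gives
\[
\mu(A\wr_* S_N^+,w)\;=\;\mathcal{P}_1(\mu(A,v))\;=\;\pi_1\boxtimes\mu(A,v)\;=\;\mu(S_N^+,u)\boxtimes\mu(A,v),
\]
which is the claim (after using commutativity of $\boxtimes$).

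The only non-routine step is the last one: one must make sure the version of the multiplicative convolution identity being cited applies in the generality at hand, i.e.\ for a self-adjoint character whose spectrum is in $[0,N]$. Since $(A,v)$ is a quantum permutation subgroup, the entries of $v$ are projections and $\chi(v)$ is a positive bounded self-adjoint element, so $\mu(A,v)$ is supported in $[0,N]$ and $\pi_1\boxtimes\mu(A,v)$ is unambiguously defined; this is the only point requiring care, and it is not a serious obstacle.
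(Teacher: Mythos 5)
Your proposal is correct and follows essentially the same route as the paper: apply the preceding proposition to $\alpha=v$ to see that $\chi(w)$ is a free compound Poisson $\mathcal{P}_1(\mu(A,v))$, identify $\mu(S_N^+,u)$ with the free Poisson law, and use the identity $\pi_1\boxtimes\mu=\mathcal{P}_1(\mu)$ (valid here since $\chi(v)$ is positive), which is exactly the "orthogonal case" fact the paper invokes. You merely spell out the cumulant verification that the paper leaves as a citation, so there is nothing genuinely different to compare.
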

\begin{proof}
It is a direct consequence of the last proposition and the fact that in the orthogonal case  the law of a free compound poisson with initial law $\mu$ is the same as the free multiplicative convolution of $\mu$ with the free poisson distribution.
\end{proof}
The conjecture asserts that this formula still holds when replacing $S_N^{+}$ with certain quantum subgroups of $S_N^{+}$. See \cite{banica2007free} for more details.\\

\subsection{Weingarten calculus}\label{Weingarten}
We can also elaborate a Weingarten calculus for a free wreath product. It was mainly developped in the framework of compact quantum groups and permutation quantum groups by Banica and Collins (see \cite{banica2005integration},\cite{banica2007integration}). This tool has mainly two advantages : on one hand it allows us sometimes to get some interesting formulae for the Haar state on the matrix entries of a corepresentation, and on the other hand it yields some asymptotic results on the joint law of a finite set of elements when the dimension of the quantum group goes to infinity. 

Let us first sum up the pattern of this method coming from \cite{banica2005integration}: let $\mathbb{G}=(A,(u_{ij})_{1\leq i,j\leq n})$ be a matrix compact quantum group acting on $V^{\otimes k}=\langle X_{i}\rangle_{1\leq i\leq n}^{\otimes k}$ with the corepresentation $\alpha_{k}$, and $h$ the associated Haar measure. We will assume that $\mathbb{G}$ is orthogonal to simplify the notations, although it could be easily generalized to the general Kac type case : that means that the elements $u_{ij}$ are all self-adjoint in $A$ (see \cite{woronowicz1987compact}). By the property of the Haar state, 
$$(Id\otimes h)\circ \alpha_{k}(X_{i_{1}}\otimes\dots\otimes X_{i_{k}})=P(X_{i_{1}}\otimes\dots\otimes X_{i_{k}}),$$
with $P$ the orthogonal projection of $V^{\otimes k}$ on the invariant subspace of $\alpha_{k}$. On the other hand,
$$(Id\otimes h)\circ \alpha_{k}(X_{i_{1}}\otimes\dots\otimes X_{i_{k}})=\sum h(u_{j_{1}i_{1}}\dots u_{j_{k}i_{k}})(X_{j_{1}}\otimes\dots\otimes X_{j_{k}}) .$$
We get thus the following expression for the Haar state on $u_{j_{1}i_{1}}\dots u_{j_{k}i_{k}}$:
$$ h(u_{j_{1}i_{1}}\dots u_{j_{k}i_{k}})=\langle P(X_{i_{1}}\otimes\dots\otimes X_{i_{k}}),X_{j_{1}}\otimes\dots\otimes X_{j_{k}}\rangle.$$
The right-hand side may be hard to compute. Hopefully the Gram-Schmidt orthogonalisation process yields a nicer expression if we already know a basis of the invariant subspace $S_{k}$ of $\alpha_{k}$. Let $\lbrace S_{k}(i)\rbrace$ be a basis of this subspace, $G_{k}$ being the Gram-Schmidt matrix of $\lbrace S_{k}(i)\rbrace$ defined by $G_{k}(i,j)=\langle S_{k}(i),S_{k}(j)\rangle$ and $W_{k}=G_{k}^{-1}$. A standard computation  yields:
$$ h(u_{j_{1}i_{1}}\dots u_{j_{k}i_{k}})=\sum_{i,j} \langle X_{i_{1}}\otimes\dots\otimes X_{i_{k}},S_{k}(i)\rangle W_{k}(i,j)\langle S_{k}(j),X_{j_{1}}\otimes\dots\otimes X_{j_{k}}\rangle.$$
Of course the matrix $W_{k}(i,j)$ is hard to compute.

Let us see nonetheless what it gives in the case of a free wreath product $(\mathbb{G}\wr_{*}S_N^{+},(w_{ij,kl}))$, with $\mathbb{G}$ an orthogonal matrix quantum group. A basis of $S_{k}$ is given by the vectors $U^{p,S}, p\in NC(k)$, as defined in (\ref{ncint2}). The first task is to compute the matrix $W_{k}(i,j)$. Consider the following map 
\begin{align*}
t_k : (\mathbb{C}^N\otimes V)\otimes\dots\otimes (\mathbb{C}^N\otimes V)&\to (\mathbb{C}^N)^{\otimes k}\otimes V\otimes \dots\otimes V\\
\bigotimes_{i=1}^k(x_i\otimes y_i)&\mapsto\bigotimes_{i=1}^kx_i\otimes\bigotimes_{i=1}^ky_i.
\end{align*} 
$t_{k}$ is unitary and and by definition of $U^{p,S}$,
$$t_{k}(U^{p,S})=T_{p}\otimes S.$$
Recall that $S$ depends implicitly on $p$ through the definition \eqref{ncint2}: the latter is an invariant vector of the $k-$tensor product representation of $\mathbb{G}$ that respects the block structure of $p$. Nevertheless $S$ is independant of $N$ and in particular we have the expression
\begin{align*}
\langle U^{p,S},U^{q,S'}\rangle=&\langle t_{k}(U^{p,S}),t_{k}(U^{q,S'})\rangle\\
=&\langle T_{p},T_{q}\rangle\langle S,S'\rangle=N^{b(p\vee q)}\langle S,S'\rangle.
\end{align*}
\begin{rque}
Easy quantum groups form a particular family of compact quantum groups whose associated intertwiners spaces can be combinatorically described. Namely if $\mathbb{G}$ is an easy quantum group, the invariant subspace of the $k-$tensor-product representation is spanned by the vectors $T_{p}$, as defined in Definition\ref{canocons}, with $p$ belonging to a subcategory of $\mathcal{P}(k)$. See \cite{BS09}, \cite{raum2013full} for more informations on the subject, and \cite{kostler2009noncommutative}, \cite{freslonrepresentation} and \cite{Bra11} for some applications. In this case, the scalar product matrix has a simpler form. Indeed if $\mathbb{G}$ is an easy quantum group of dimension $s$ and with category of partition $\mathcal{C}$, then a direct computation yields for $\alpha\leq p,\beta\leq q$ two partitions in $\mathcal{C}$:
$$\langle U^{p,\alpha},U^{q,\beta}\rangle=N^{b(p\vee q)}s^{b(\alpha\vee\beta)}.$$
The Weingarten formula has also a more combinatorial form since we can write:
$$h(w_{i_{1}j_{1},k_{1}l_{1}}\dots w_{i_{r}j_{r},k_{r}l_{r}})=\sum_{\substack{\alpha\leq \ker(\vec{i}),\beta\leq \ker(\vec{j})\\\alpha\leq p\leq\ker(\vec{k}),\beta\leq q\leq \ker(\vec{l})}}G_{k}^{-1}((p,\alpha),(q,\beta)),$$
where $\ker(\vec{i})$ is the partition whose blocks are the set of indices on which $i$ has the same value.
\end{rque}
The  scalar product matrix $G_{k}=(\langle U^{p,S},U^{q,S'}\rangle)_{(p,S),(q,S')}$ is a block matrix, the blocks $G_{k}^{pq}$ being indexed by $p,q\in NC(k)$. Note that as in \cite{banica2005integration}, one can factorize this matrix as following:
$$G_{k}=\Delta_{nk}^{1/2}\tilde{G}\Delta_{nk}^{1/2},$$
where $\Delta_{nk}$ is the diagonal matrix with diagonal coefficients
$$\Delta_{nk}((p,S),(p,S))=N^{b(p)}$$
and 
$$\tilde{G}_{k}((p,S),(q,S')=N^{b(p\vee q)-\frac{b(p)+b(q)}{2}}\langle S,S'\rangle.$$
Asymptotically with $n$ going to infitiny, $\tilde{G}_{k}=D_{k}(1+o(\frac{1}{\sqrt{n}}))$, $D_{k}$ being the block diagonal matrix 
$$D_{k}((p,S),(q,S'))=\delta_{p,q}\langle S,S'\rangle.$$
Finally we can remark that restricted on the subspace $V_{p_{0}}=Vect( (U_{p_{0},S})_{S})$, the matrix  $(\langle S,S'\rangle)_{S,S'}$ is the tensor product of the Gram-Schmidt matrices  of  $\mathbb{G}$ $G_{\mathbb{G},\vert B_{i}\vert}$, for each block $\vert B_{i}\vert$ of $p_{0}$. If we put all these considerations together, we get that 
$$W_{n}((p,S),(q,S'))=\delta_{p,q}N^{-b(p)}\left(\bigotimes_{B\in p}W_{\mathbb{G}}^{-1}\right)(S,S')(1+o(\frac{1}{\sqrt{n}})).$$
This formula allows to generalize the results in \cite{banica2005integration} to the free wreath product case. Define the following partial trace:
\begin{defi}
Let $0\leq s\leq n$ the partial trace of order $s$ of the matrix $w=(w_{ij,kl})_{1\leq i,j\leq r,1\leq k,l\leq n}$ is 
$$\chi^{w}(s)=\sum_{i=1}^{r}\sum_{k=1}^{s} w_{ii,kk}.$$
\end{defi}
The following result holds for a free wreath product with $S_N^{+}$:
\begin{thm}
Let $\mathbb{G}$ be a matrix compact quantum group of Kac type and dimension $r$, $\chi_{\mathbb{G}}$ the law of the character of its fundamental representation. Let $(W^{n},(w_{ij,kl})_{1\leq i,j\leq r,1\leq k,l\leq n})$ be the matrix quantum group $\mathbb{G}\wr_{*}S_N^{+}$ with its fundamental representation $w$. Then with respect to the haar measure, if $s\sim tn$ for $t\in(0,1]$, $n$ going to infinity,
$$\chi^{w}(s)\rightarrow \mathcal{P}_{t}(\chi_{\mathbb{G}}).$$
\end{thm}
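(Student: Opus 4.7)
The plan is to compute the $*$-moments of $\chi^w(s)$ via the Weingarten calculus developed in Subsection \ref{Weingarten}, plug in the asymptotic expansion of $W_n$, and recognize the limit as the moment-cumulant formula for $\mathcal{P}_t(\chi_{\mathbb{G}})$.

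For a word $\epsilon=\epsilon(1)\cdots\epsilon(m)\in\lbrace 1,*\rbrace^m$, I first expand
\begin{align*}
h\bigl((\chi^{w}(s))^{\epsilon(1)}\cdots(\chi^{w}(s))^{\epsilon(m)}\bigr)=\sum_{\vec{i}\in[r]^m,\,\vec{k}\in[s]^m}h\bigl(w_{i_{1}i_{1},k_{1}k_{1}}^{\epsilon(1)}\cdots w_{i_{m}i_{m},k_{m}k_{m}}^{\epsilon(m)}\bigr).
\end{align*}
By Theorem \ref{bigthm}, the invariant subspace of $w^{\otimes\epsilon}$ is spanned by the vectors $U^{p,S}$ with $p\in NC(\epsilon)$ and $S$ running over a basis of $\mathbb{G}$-intertwiners from $1$ into $\bigotimes_{B\in p}v^{\otimes\epsilon(B)}$. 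Writing $E_{\vec{k},\vec{i}}=\bigotimes_{j=1}^{m}(e_{k_{j}}\otimes Y_{i_{j}})$, one gets $\langle E_{\vec{k},\vec{i}},U^{p,S}\rangle=\delta_{\ker(\vec{k})\geq p}\,S_{\vec{i}}$, and the Weingarten formula yields
\begin{align*}
h\bigl(w_{i_{1}i_{1},k_{1}k_{1}}^{\epsilon(1)}\cdots\bigr)=\sum_{(p,S),(q,S')}\delta_{\ker(\vec{k})\geq p\vee q}\,S_{\vec{i}}\,\overline{S'_{\vec{i}}}\,W_{n}\bigl((p,S),(q,S')\bigr).
\end{align*}
Summing $\vec{k}\in[s]^m$ under the constraint $\ker(\vec{k})\geq p\vee q$ contributes $s^{b(p\vee q)}$ and summing $\vec{i}\in[r]^m$ produces $\langle S',S\rangle$, so
\begin{align*}
h\bigl((\chi^{w}(s))^{\epsilon}\bigr)=\sum_{(p,S),(q,S')}s^{b(p\vee q)}\,\langle S',S\rangle\,W_{n}\bigl((p,S),(q,S')\bigr).
\end{align*}

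Next I plug in the asymptotic $W_{n}((p,S),(q,S'))=\delta_{p,q}\,n^{-b(p)}\bigl(\bigotimes_{B\in p}W_{\mathbb{G}}\bigr)(S,S')+o(n^{-b(p)})$ with $W_{\mathbb{G}}$ the inverse Gram matrix on $\mathbb{G}$-invariants. In the regime $s=tn$ with $n\to\infty$, only the diagonal $p=q$ terms survive, giving
\begin{align*}
\sum_{p\in NC(\epsilon)}t^{b(p)}\sum_{S,S'}\langle S',S\rangle\,\Bigl(\bigotimes_{B\in p}W_{\mathbb{G}}\Bigr)(S,S').
\end{align*}
For fixed $p$ the matrices $(\langle S,S'\rangle)_{S,S'}$ and $(\bigotimes_{B\in p}W_{\mathbb{G}})(S,S')$ are the Gram matrix and its inverse on $\Hom_{\mathbb{G}}(1,\bigotimes_{B\in p}v^{\otimes\epsilon(B)})=\bigotimes_{B\in p}\Hom_{\mathbb{G}}(1,v^{\otimes\epsilon(B)})$, and their pairing is the trace of the identity on this space, namely $\prod_{B\in p}\dim\Hom_{\mathbb{G}}(1,v^{\otimes\epsilon(B)})=\prod_{B\in p}m_{\chi_{\mathbb{G}}}(\epsilon(B))$. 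Hence
\begin{align*}
\lim_{n\to\infty}h\bigl((\chi^{w}(tn))^{\epsilon}\bigr)=\sum_{p\in NC(\epsilon)}t^{b(p)}\prod_{B\in p}m_{\chi_{\mathbb{G}}}(\epsilon(B)),
\end{align*}
which is exactly the moment-cumulant formula applied to the free cumulants $k(\epsilon(B))=t\cdot m_{\chi_{\mathbb{G}}}(\epsilon(B))$, i.e. the $\epsilon$-moment of $\mathcal{P}_{t}(\chi_{\mathbb{G}})$.

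The main obstacle is to verify that the off-diagonal ($p\neq q$) contributions and the $o(1/\sqrt{n})$ corrections indeed vanish. Using the factorization $G_{n}=\Delta_{n}^{1/2}\widetilde{G}_{n}\Delta_{n}^{1/2}$ with $\Delta_{n}((p,S),(p,S))=n^{b(p)}$ and $\widetilde{G}_{n}\to D_{k}$ block-diagonal, one has $W_{n}=\Delta_{n}^{-1/2}\widetilde{G}_{n}^{-1}\Delta_{n}^{-1/2}$, and the off-diagonal entries of $\widetilde{G}_{n}^{-1}$ are $O(n^{-1/2})$; thus $W_{n}((p,S),(q,S'))=O\bigl(n^{-(b(p)+b(q))/2-1/2}\bigr)$ for $p\neq q$. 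Since $b(p\vee q)\leq\min(b(p),b(q))\leq(b(p)+b(q))/2$ with equality only at $p=q$, the off-diagonal contributions at scale $s=tn$ are $O(n^{-1/2})$ and vanish in the limit, and the subleading diagonal corrections are controlled identically, leaving only the leading term computed above.
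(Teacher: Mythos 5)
Your argument is correct and follows essentially the same route as the paper's proof: both compute the moments through the Weingarten formula, which amounts to $\Tr(G_{k,n}^{-1}G_{k,s})$, and then use the asymptotic block-diagonal form of the Weingarten matrix to obtain the limit $\sum_{p\in NC(\epsilon)}t^{b(p)}\prod_{B\in p}m_{\chi_{\mathbb{G}}}(\epsilon(B))$, recognized as the moments of $\mathcal{P}_{t}(\chi_{\mathbb{G}})$. The only differences are cosmetic: you re-derive the trace identity and bound the off-diagonal and subleading terms entry-wise (and allow general $*$-words), whereas the paper cites the analogous computation of Banica--Collins and disposes of the error terms via the commutation of $D_{k}$ with the diagonal matrices $\Delta_{nk},\Delta_{sk}$.
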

\begin{proof}
A similar computation as in \cite{banica2005integration}, Theorem 5.1 gives 
$$\int (\chi^{w}(s))^{k}=Tr(G_{k,n}^{-1}G_{k,s})$$
and with the asymptotic form of $G_{k,n}$ this gives us:
$$G_{k,n}^{-1}G_{k,s}=\Delta_{nk}^{-1/2}D_{k}^{-1}\Delta_{nk}^{-1/2}(Id+o(\frac{1}{\sqrt{n}}))\Delta_{sk}^{1/2}D_{k}\Delta_{sk}^{1/2}(Id+o(\frac{1}{\sqrt{n}})).$$
Since $D_{k}$ is block diagonal and $\Delta_{nk},\Delta_{sk}$ are diagonal, and equal to the identity on each block, these three matrices commute, and 
$$Tr(G_{k,n}^{-1}G_{k,s})=Tr(\Delta_{s/n,k}(Id+o(\frac{1}{\sqrt{n}})))\rightarrow Tr(\Delta_{t,k}).$$
Since 
$$Tr(\Delta_{t,k})=\sum_{p\in NC(k)} t^{b(p)} \dim V_{p}=\sum_{p\in NC(k)}t^{b(p)} \prod_{B\in p}m_{\vert B\vert}(\chi_{\mathbb{G}}).$$
The latter expression is exactly the $k-$th moment of the law $\mathcal{P}_{t}(\chi_{\mathbb{G}})$.
\end{proof}
\begin{rque}
All the results of this section can be transposed to the classical case. One just has to substitute classical compound poisson laws for free compound poisson laws, and use crossing partitions instead of non-crossing ones.
\end{rque}

\section{The monoidal category of free wreath products by quantum permutation groups}
Let $\mathbb{G}$ be a compact matrix quantum group of Kac type, $N\ge4$. In this section, we prove that $\mathbb{G}\wr_*S_{N}^+$ is monoidally equivalent to a compact quantum group $\mathbb{H}$ with $C(\mathbb{H})\subset C(\mathbb{G}*SU_{q}(2))$ and $q+q^{-1}=\sqrt{N}$, $0<q\le1$. In other words, we shall construct $\widehat{\mathbb{H}}$ as a discrete quantum subgroup of $\widehat{\mathbb{G}*SU_{q}(2)}$.

 We denote by $b:=(b_{ij})_{1\le i,j\le 2}$ the generating matrix of $SU_{q}(2)$. Let $\mathbb{H}=(C(\mathbb{H}),\Delta)$ be the compact matrix quantum group with 
$$C(\mathbb{H}):=C^*-\langle b_{ij}ab_{kl}\ |\ 1\le i,j,k,l\le 2, a\in C(\mathbb{G})\rangle\subset C(\mathbb{G})*C(SU_{q}(2)),$$
\begin{align*}
\Delta(b_{ij}ab_{kl})&=\Delta_{SU_{q}(2)}(b_{ij})\Delta_{\mathbb{G}}(a)\Delta_{SU_{q}(2)}(b_{kl})\\
&=\sum_{r,s}(b_{ir}\otimes b_{rj})\sum (a_{(1)}\otimes a_{(2)})\sum_t(b_{kt}\otimes b_{tl})\\
&=\sum_{r,s,t}b_{ir}a_{(1)}b_{kt}\otimes b_{rj}a_{(2)}b_{tl}\in C(\mathbb{H})\otimes C(\mathbb{H}).
\end{align*}

\begin{nota}
For any $\alpha\in \Irr(\mathbb{G})$, we denote $s(\alpha)=b\otimes\alpha\otimes b$.
\end{nota}

We need to recall some notions on Temperley-Lieb diagrams and fix some notation.

Recall that the intertwiner spaces between tensor powers of $b$ in $SU_{q}(2)$ are given by Temperley-Lieb diagrams as follows: 
$$\Hom_{SU_{q}(2)}(b^{\otimes k},b^{\otimes l})=\text{span}\{T_{D} : (\mathbb{C}^2)^{\otimes k}\to (\mathbb{C}^2)^{\otimes l} : D\in TL(k,l)\},$$
where $TL_q(k,l)$ consists of Temperley-Lieb diagrams (non-crossing pairings) between $k$ upper points and $l$ lower points linked by $(k+l)/2$ strings (this set is empty if $k+l$ is odd). 

We define $TL_q(k,l), k,l\in\N$ as the vector space with basis $TL(k,l)$ and the collection of spaces $TL_q(k,l), k,l\in\N$ forms a rigid monoidal $C^*$ category: when performing a composition $D\circ E$ of two such diagrams, closed loops $\{\text{O}\}$ might appear. They correspond to a multiplication by a factor $q+q^{-1}(=\sqrt{N}$, here) in the final vertical concatenation denoted $DE$. We will denote by $b(D,E)$ the number of closed blocks (closed loops in this case) appearing while performing such operations and we then have
$$D\circ E=N^{b(D,E)/2}DE.$$
The collection of spaces $\Hom_{SU_{q}(2)}(b^{\otimes k},b^{\otimes l}), k,l\in\mathbb{N}$ form a rigid monoidal $C^*$-tensor category $\mathscr{C}SU_{q}(2)$ which is generated by $(T_D)_{D\in \langle\mathcal{D}SU_{q}(2)\rangle}$ with
\begin{equation}\label{genon}
\mathcal{D}SU_{q}(2)=\{\left\{\cap\},\{|\}\right\}
\end{equation}
where $\{\cap\}\in TL(0;2), \{|\}\in TL(1;1)$. The set $\langle\mathcal{D}SU_{q}(2)\rangle$ is composed of all the diagrams obtained by usual composition, tensor product and conjugation of diagrams in $\mathcal{D}SU_{q}(2)$. 

We denote by $\tau$ the non-normalized Markov trace on $TL_q(k,k)$ defined as $$\tau(D)=(q+q^{-1})^{C_D}=(\sqrt{N})^{C_D}$$ where $C_D\in\N$ is  the numbers of ``closed curves" appearing when closing a diagram $D\in TL(k,k)$ by strings on top and bottom as follows:
\begin{center}
\begingroup%
  \makeatletter%
  \providecommand\color[2][]{%
    \errmessage{(Inkscape) Color is used for the text in Inkscape, but the package 'color.sty' is not loaded}%
    \renewcommand\color[2][]{}%
  }%
  \providecommand\transparent[1]{%
    \errmessage{(Inkscape) Transparency is used (non-zero) for the text in Inkscape, but the package 'transparent.sty' is not loaded}%
    \renewcommand\transparent[1]{}%
  }%
  \providecommand\rotatebox[2]{#2}%
  \ifx\svgwidth\undefined%
    \setlength{\unitlength}{114.90936748bp}%
    \ifx\svgscale\undefined%
      \relax%
    \else%
      \setlength{\unitlength}{\unitlength * \real{\svgscale}}%
    \fi%
  \else%
    \setlength{\unitlength}{\svgwidth}%
  \fi%
  \global\let\svgwidth\undefined%
  \global\let\svgscale\undefined%
  \makeatother%
  \begin{picture}(1,1.04367313)%
    \put(0,0){\includegraphics[width=\unitlength]{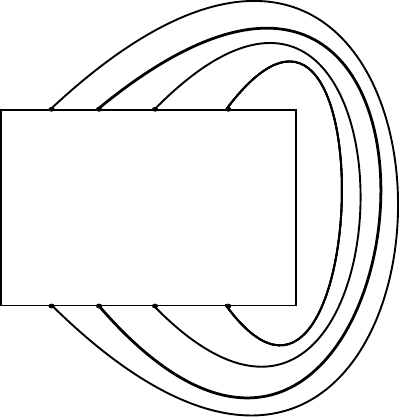}}%
    \put(0.29664702,0.49899459){\color[rgb]{0,0,0}\makebox(0,0)[lb]{\smash{$D$}}}%
  \end{picture}%
\endgroup%

\end{center}
In addition to this pictorial representation, one can define $C_D, D\in TL_q(k,k)$ (and then the Markov trace on $TL_q(k,k)$) by an algebraic formula as follows: consider the conditional expectation $$p_k : TL_q(k+1,k+1)\to TL_q(k,k)$$ obtained by only closing up the last strand:
$$p_k : D\in TL_q(k+1,k+1)\mapsto (id^{\otimes k}\otimes\cup)\circ(D\otimes id)\circ(id^{\otimes k}\otimes\cap)\in TL_q(k,k).$$
Then $$C_D=p_0\circ p_1\circ\dots\circ p_{k-1}(D).$$

We recall that the map $D\mapsto T_D$ is the GNS map associated with $\tau$ on $TL_q$ and that for all diagrams $D,E\in TL(k,k)$, $\text{Tr}(T_D^*T_E)=\tau(D^*E)$. 

We will need the following well known result: the category of representations of $S_{N^2}^+$ is the one of the so-called even part of $O_N^+$. We shall provide a ``diagrammatic" proof of this result based on a result in \cite{chen}: we refer to \cite[Proposition 3.1]{chen} for more details. We denote by $NC_N$ the category of non-crossing partitions in $NC$ with the rule that a closed block corresponds to a factor $N=(\sqrt{N})^2=(q+q^{-1})^2$. As for diagrams in $TL$, we denote by $b(p,p')$ the number of closed blocks appearing when performing a vertical concatenation of composable non-crossing partitions $p,p'$. 

We will also use the notion of ``black region(s)" for $D\in TL(2k,2k)$, denoted $br(D)$. This is defined as follows: we enclose the diagram $D$ in a box, called external box. The lines of the diagram then produce regions in this external box. The first region on the left of the box is shaded white. Then, going away from the left side of the external box, regions having a common line as a boundary are shaded by different colors. One can refer again to \cite{chen}. Let us give an example of diagram $D\in TL(4,4)$ with $br(D)=2$:
\begin{center}
\begingroup%
  \makeatletter%
  \providecommand\color[2][]{%
    \errmessage{(Inkscape) Color is used for the text in Inkscape, but the package 'color.sty' is not loaded}%
    \renewcommand\color[2][]{}%
  }%
  \providecommand\transparent[1]{%
    \errmessage{(Inkscape) Transparency is used (non-zero) for the text in Inkscape, but the package 'transparent.sty' is not loaded}%
    \renewcommand\transparent[1]{}%
  }%
  \providecommand\rotatebox[2]{#2}%
  \ifx\svgwidth\undefined%
    \setlength{\unitlength}{345.65bp}%
    \ifx\svgscale\undefined%
      \relax%
    \else%
      \setlength{\unitlength}{\unitlength * \real{\svgscale}}%
    \fi%
  \else%
    \setlength{\unitlength}{\svgwidth}%
  \fi%
  \global\let\svgwidth\undefined%
  \global\let\svgscale\undefined%
  \makeatother%
  \begin{picture}(1,0.3821785)%
    \put(0,0){\includegraphics[width=\unitlength]{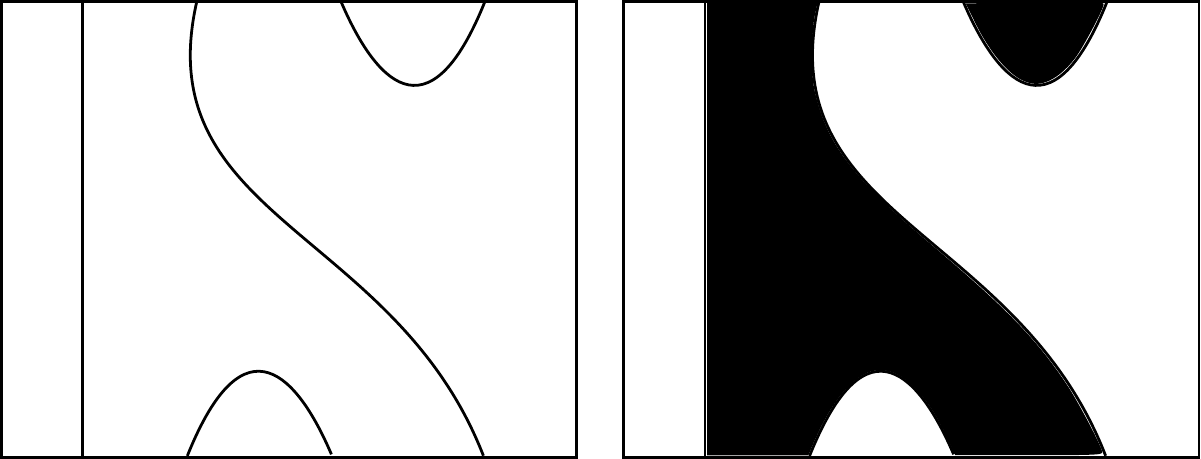}}%
  \end{picture}%
\endgroup%

\end{center}


\begin{prp}\label{insnplus}
Let $N\ge4$ and $0<q\le1$ such that $q+q^{-1}=\sqrt{N}$. Then there exist isomorphisms $$\phi : TL_q(2k,2l)\to NC_N(k,l)$$ for all $k,l\in\mathbb{N}$, all denoted $\phi$ and such that 
\begin{enumerate}
\item $\phi(\id)=\id'$, with $\id=\{|\ |\}\in TL_q(2,2)$ and $\id'=\{|\}\in NC_N(1,1)$,
\item $\phi(D\otimes E)=\phi(D)\otimes\phi(E)$, $\forall D,E\in TL_q$,
\item $\phi(D^*)=\phi(D)^*$, $\forall D\in TL_q$,
\item $\phi(D\circ E)=\phi(D)\circ\phi(E)$, $\forall (D,E)\in TL_q(2l,2m)\times TL_q(2k,2l)$.
\end{enumerate}
\end{prp}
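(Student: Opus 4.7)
My plan is to construct $\phi$ pictorially via the classical correspondence between non-crossing partitions and Temperley--Lieb diagrams obtained by ``fattening'' / shading by black regions, following the construction recalled in \cite{chen}. Given $p\in NC(k,l)$, I double each of the $k+l$ points (obtaining $2k$ upper and $2l$ lower points) and draw the boundary of each block of $p$ as a collection of arcs between the doubled points. The non-crossing property of $p$ guarantees these arcs form a planar pairing, producing a diagram $\phi^{-1}(p)\in TL_q(2k,2l)$. Conversely, given $D\in TL_q(2k,2l)$, the strings of $D$ divide the rectangle into regions; shading so that the leftmost region is white and adjacent regions alternate, I define $\phi(D)\in NC(k,l)$ by declaring two ``thick'' points (pairs of adjacent endpoints) equivalent if they lie on the boundary of a common black region. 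A direct inspection shows $\phi$ and $\phi^{-1}$ are mutually inverse; as a sanity check, both $TL(2k,2l)$ and $NC(k,l)$ are of cardinality $C_{k+l}$ (the Catalan number).

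Compatibility with the identity, tensor product, and involution is then immediate from the pictorial construction: the single-point partition $\{|\}\in NC(1,1)$ fattens to $\{|\ |\}\in TL_q(2,2)$; horizontal concatenation of partitions corresponds to horizontal concatenation of the fattened diagrams; and reflection upside-down commutes with the fattening/shading procedure.

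The substantive step --- and in my view the main obstacle --- is compatibility with composition, where one must carefully track how the closed blocks of $NC_N$ correspond to the closed loops of $TL_q$. When stacking $\phi^{-1}(p)$ over $\phi^{-1}(p')$, each closed block appearing in the $NC_N$ composition $p\circ p'$ corresponds, under fattening, to a bounded black region whose boundary is formed by two concentric arcs, and these arcs produce exactly two closed loops in the corresponding $TL_q$ composition. Since a closed loop in $TL_q$ contributes the factor $q+q^{-1}=\sqrt{N}$ while a closed block in $NC_N$ contributes $N=(q+q^{-1})^2$, the count $b(\phi^{-1}(p),\phi^{-1}(p'))=2\,b(p,p')$ ensures that the scaling factors match, giving $\phi(D\circ E)=\phi(D)\circ\phi(E)$ at the level of the rigid $C^*$-tensor categories $TL_q$ and $NC_N$. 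The verification of this doubling identity is essentially a local planar calculation that can be read off from the shading picture, block by block.
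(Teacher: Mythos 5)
Your combinatorial skeleton is the right one and is in fact the same correspondence the paper uses (fattening of non-crossing partitions, collapsing of Temperley--Lieb diagrams), but your composition step contains a genuine error, and with it your definition of $\phi$. You take $\phi(D)$ to be the collapsed partition $c(D)$ with coefficient $1$, so compatibility with composition would force the loop count to satisfy $b\bigl(\phi^{-1}(p),\phi^{-1}(p')\bigr)=2\,b(p,p')$ for \emph{all} composable $p,p'$. Your concentric-arcs picture does give the factor $2$ when a closed block arises from matching two-point blocks, but it already fails for singleton blocks: the singleton lower point in $NC(0,1)$ fattens to $\cap\in TL_q(0,2)$ and the singleton upper point in $NC(1,0)$ fattens to $\cup\in TL_q(2,0)$; composing the partitions in $NC_N$ closes one block and produces the factor $N$, whereas composing the fattened diagrams in $TL_q$ produces a single loop and only the factor $q+q^{-1}=\sqrt{N}$. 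Thus with your unnormalized map $\phi(\cup\circ\cap)=\sqrt{N}$ while $\phi(\cup)\circ\phi(\cap)=N$, so property $(4)$ fails. In general the number of loops created is not twice the number of closed blocks; the discrepancy is governed by how many blocks (equivalently, black regions) get capped off, which is exactly what a one-to-one bookkeeping of ``one block $\mapsto$ two concentric loops'' cannot see.

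This is why the isomorphism cannot be the bare bijection on diagrams: following \cite[Theorem 4.2]{chen}, the paper sets $\phi(D)=N^{\frac{k+l}{4}-\frac{br(D)}{2}}\,c(D)$ for $D\in TL_q(2k,2l)$, where $br(D)$ is the number of black regions (in the square case $k=l$ this coefficient is the ratio $\tau(D)/\widetilde{\tau}(c(D))$ of Markov traces). With this rescaling $\phi(\cap)=N^{-1/4}\cdot(\text{singleton})$, and indeed $\phi(\cup)\circ\phi(\cap)=N^{-1/2}\cdot N=\sqrt{N}=\phi(\cup\circ\cap)$. Your checks of $(1)$, $(2)$, $(3)$ survive the rescaling, since the exponent behaves additively under horizontal concatenation and is invariant under upside-down reflection, but to repair the argument you must introduce this normalization and then prove multiplicativity for it; the paper does so by quoting the square case from \cite{chen} and reducing the rectangular case to it by padding with $\cap$'s and $\cup$'s, using $br(D\otimes\cap^{\otimes m})=br(D)+m=br(D\otimes\cup^{\otimes m})$.
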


\begin{proof}
 The idea is to use the so-called ``collapsing" operation $c : TL\to NC$ which associates to any Temperley-Lieb diagram, the non-crossing partition obtained by collapsing consecutive neighbors. The converse application is called ``fattening" operation of non-crossing partitions. This latter operation consists in drawing boundary lines around the blocks of $p\in NC(k,l)$ and erasing the original non-crossing partition to obtain a Temperley-Lieb diagram $P\in TL(2k,2l)$, see e.g. \cite{chen}:

\setlength{\unitlength}{0.5cm}
$$
p=\left\{\ \begin{picture}(1.8,2)\thicklines
\put(1,0.3){\line(0,1){1.3}}
\put(0,0.3){\line(1,0){2.028}}
\put(0.029,-1){\line(0,1){1.3}}
\put(2,-1){\line(0,1){1.3}}
\end{picture}\ \ \right\}\mapsto
\left\{\ \begin{picture}(1.8,2)\thicklines
\put(1,0.3){\line(0,1){1.3}}
\put(0.7,0.5){\line(0,1){1.1}}
\put(1.3,0.5){\line(0,1){1.1}}
\put(-0.32,0.5){\line(1,0){1.05}}
\put(1.27,0.5){\line(1,0){1.05}}
\put(0.27,0.1){\line(1,0){1.4}}
\put(0,0.3){\line(1,0){2.028}}
\put(0.029,-1){\line(0,1){1.3}}
\put(-0.3,-1){\line(0,1){1.5}}
\put(2.3,-1){\line(0,1){1.5}}
\put(0.3,-1){\line(0,1){1.1}}
\put(2,-1){\line(0,1){1.3}}
\put(1.68,-1){\line(0,1){1.13}}
\end{picture}\ \ \right\}\mapsto\
P:=\left\{\ \begin{picture}(1.8,2)\thicklines
\put(0.7,0.5){\line(0,1){1.1}}
\put(1.3,0.5){\line(0,1){1.1}}
\put(-0.32,0.5){\line(1,0){1.05}}
\put(1.27,0.5){\line(1,0){1.05}}
\put(0.27,0.1){\line(1,0){1.4}}
\put(-0.3,-1){\line(0,1){1.5}}
\put(2.3,-1){\line(0,1){1.5}}
\put(0.3,-1){\line(0,1){1.1}}
\put(1.68,-1){\line(0,1){1.13}}
\end{picture}\ \ \right\}.
$$
It is proved in \cite[Theorem 4.2]{chen} that the collapsing operation $D\mapsto c(D)$ provides, for all $k$, an isomorphism
$$\psi : D\in TL_q(2k,2k)\mapsto \frac{\tau(D)}{\widetilde{\tau}(c(D))}c(D)\in NC_N(k,k)$$ with $\tau$, $\widetilde{\tau}$ are non-normalized Markov traces on $TL_q$, $NC_N$. In particular following \cite{chen}, we have 
\begin{align}\label{prptra}
&\tau(\id)=(q+q^{-1})^2=N=\widetilde{\tau}(\id') \text{ i.e. } \phi(\id)=\id'.
\end{align}
It is also proved in \cite{chen} that the map $\psi$ satisfies all four relations $(1),(2),(3),(4)$ in the statement. To prove this, they use in particular an alternative definition of $\psi$ : $D\in TL_q(2k,2k), \psi(D)=\sqrt{N}^{k-br(D)}c(D)$ where $br(D)$ is the number of black regions in $D$.

The notion of black region for diagrams $D\in TL_q(2k,2l)$ can be defined the same way as in the case $k=l$. In particular, notice that if $D\in TL(2k,2l)$ then for all $m\in \mathbb{N}$:
\begin{align}\label{capcup}
br(D\otimes \cap^{\otimes m})=br(D)+m=br(D\otimes\cup^{\otimes m}).
\end{align}

We now define $\phi : TL_q(2k,2l)\to NC_N(k,l)$ for all $k,l$ by 
$$D\mapsto N^{\frac{k+l}{4}-\frac{br(D)}{2}}c(D).$$

One can easily see that $\psi$ satisfies relations $(1)$ and $(3)$. For $(2)$ and $(4)$ we use the fact that $\phi|_{TL_q(2k,2k)}=\varphi$:

$(2):$ Let $D\in TL_q(2k,2l)$ and $E\in TL_q(2k',2l')$. Consider $D'\in TL_q(2\max(k,l),2\max(k,l))$ defined by :
\begin{enumerate}
\item[$\bullet$] $D'=D\otimes \cap^{k-l}\in TL_q(2k,2k)$ if $k\ge l$,
\item[$\bullet$] $D'=D\otimes \cup^{l-k}\in TL_q(2l,2l)$ if $l\ge k$,
\end{enumerate}
and the analogue construction for $E$. Then from the case $k+k'=l+l'$, we get :
$$\phi(D'\otimes E')=\phi(D')\otimes\phi(E'),$$
with 
$$D'\otimes E'\in TL_q(2(\max(k,l)+\max(k',l')),2(\max(k,l)+\max(k',l'))).$$
We have:
\begin{enumerate}
\item[$\bullet$] $\phi(D'\otimes E')=N^{\frac{\max(k,l)+\max(k',l')}{4}-\frac{br(D'\otimes E')}{2}}c(D')\otimes c(E')$.  The exponent of $N$ appearing in this expression is then :
\begin{align*}
&\frac{\max(k,l)}{4}+\frac{\max(k',l')}{4}-\frac{br(D'\otimes E')}{2}\\
&\overset{(\ref{capcup})}{=}\frac{\max(k,l)}{4}+\frac{\max(k',l')}{4}-\frac{br(D\otimes E)}{2}-\frac{|k-l|+|k'-l'|}{2}\\
&=\frac{\min(k,l)+\min(k',l')}{4}-\frac{br(D\otimes E)}{2}.\\
\end{align*}
\item[$\bullet$] $\phi(D')\otimes \phi(E')=N^{\frac{\max(k,l)}{4}-\frac{br(D')}{2}}c(D')\otimes N^{\frac{\max(k',l')}{4}-\frac{br(E')}{2}}c(E')$. The exponent of $N$ appearing in this expression is then 
\begin{align*}
&\frac{\max(k,l)}{4}-\frac{br(D')}{2}+\frac{\max(k',l')}{4}-\frac{br(E')}{2}\\
&\overset{(\ref{capcup})}{=}\frac{\max(k,l)}{4}-\frac{br(D)}{2}-\frac{|k-l]}{2}+\frac{\max(k',l')}{4}-\frac{br(E)}{2}-\frac{|k'-l']}{2}\\
&=\frac{\min(k,l)+\min(k',l')}{4}-\frac{br(D)+br(E)}{2}.\\
\end{align*}
\end{enumerate}
We deduce from this, that $br(D\otimes E)=br(D)+br(E)$ and thus $\phi(D\otimes E)=\phi(D)\otimes\phi(E)$.

$(4):$ Consider $D\in TL_q(2l,2m)$ and $E\in TL_q(2k,2l)$. There is nothing to do in the case $k=l=m$ since then $\phi=\psi$.

Now, we first suppose that $l>k,m$. In this case, consider $$E'=E\otimes \{\cup\}^{\otimes {l-k}},\ \ D'=D\otimes \cap^{\otimes {l-m}}.$$ From the case $k=l=m$, we get $$\phi(D'\circ E')=\phi(D')\circ\phi(E').$$ Hence by compatibility of $\phi$ with tensor product of diagrams, we have:
\begin{align*}
\phi(D\circ E)\otimes\phi(\cap)^{\otimes {l-m}}\otimes\phi(\cup)^{\otimes {l-k}}=\left(\phi(D)\circ\phi(E)\right)\otimes\phi(\cap)^{\otimes {l-m}}\otimes\phi(\cup)^{\otimes {l-k}}
\end{align*}
and this implies $\phi(D\circ E)=\phi(D)\circ\phi(E)$.

Now, we suppose that $m\le l<k$. We consider $$E'=E\otimes \{\cap\}^{\otimes {k-l}},\ \ D'=D\otimes \{\cap\}^{{l-m}}\otimes \{|\ |\}^{\otimes {k-l}}.$$ Then again, $\phi(D'\circ E')=\phi(D')\circ\phi(E')$ and we get
\begin{align*}
\phi(D\circ E)\otimes\phi(\cap)^{\otimes {k-m}}=\left(\phi(D)\circ\phi(E)\right)\otimes\phi(\cap)^{\otimes {k-m}}
\end{align*}
which implies $\phi(D\circ E)=\phi(D)\circ\phi(E)$. One can proceed similarly if $k\le l<m$.

To conclude, consider the case $l<k\le m$ (or the analogue $l<m\le k$). We get the desired relation considering $$D'=D\otimes \{\cup\}^{\otimes {m-l}},\ \ E'=E\otimes \{\cap\}^{\otimes {k-l}}\otimes \{|\ |\}^{\otimes {m-k}}.$$ Indeed,
\begin{align*}
\phi(D'\circ E')&=\phi(D\circ E)\otimes \phi(\cup\circ\cap)^{\otimes {k-l}}\otimes \phi(\cup)^{\otimes {m-k}},\\
\phi(D')\circ\phi(E')&=(\phi(D)\circ\phi(E))\otimes (\phi(\cup)\circ\phi(\cap))^{\otimes {k-l}}\otimes\left(\underbrace{\phi(\cup)\circ\phi(|\ |))^{\otimes m-k}}_{\phi(\cup)^{\otimes m-k}}\right)
\end{align*}
and
\begin{enumerate}
\item[$\bullet$] $\phi(\cup\circ\cap)=\phi(\sqrt{N})=\sqrt{N}$,
\item[$\bullet$] $\phi(\cup)\circ\phi(\cap)=N^{\frac{1}{4}-\frac{1}{2}}c(\cup)\circ N^{\frac{1}{4}-\frac{1}{2}}c(\cap)=\frac{1}{N^{\frac{1}{4}}N^{\frac{1}{4}}}N=\sqrt{N}$ by the facts that $c(\cup)\circ c(\cap)=N^{b(\cup),b(\cap)}$ and $b(c(\cup),c(\cap))=1$.
\end{enumerate}  
\end{proof}

We obtain as a corollary that the map $\phi : TL_q(2k,2l)\to NC_N(k,l)$ is isometric. We denote $(\cdot|\cdot)_q$ (respectively $(\cdot|\cdot)_N$) the scalar product on $TL_q$ (respectively $NC_N$) implemented by the non-normalized Markov trace $\tau$ (respectively $\widetilde{\tau}$).
\begin{crl}\label{crlisom}
For all $D,E\in TL_q(2k,2l)$, we have $(D|E)_q=(\phi(D)|\phi(E))_N$.
\end{crl}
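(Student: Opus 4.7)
My plan is to deduce the isometry statement directly from the category equivalence established in Proposition~\ref{insnplus} together with the observation that the scalar products on both sides are implemented by the Markov traces $\tau$ and $\widetilde{\tau}$.

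Concretely, for $D, E \in TL_q(2k,2l)$ the inner product defined by $\tau$ may be written as $(D|E)_q = \tau(D^* \circ E)$, where $D^* \circ E$ is a square diagram in $TL_q(2k,2k)$ to which the Markov trace applies; analogously, $(P|Q)_N = \widetilde{\tau}(P^* \circ Q)$ for $P, Q \in NC_N(k,l)$. Using the compatibility of $\phi$ with composition and involution (items~(3) and~(4) of Proposition~\ref{insnplus}), I would write
\[
\phi(D^* \circ E) = \phi(D^*) \circ \phi(E) = \phi(D)^* \circ \phi(E),
\]
so that
\[
(\phi(D)|\phi(E))_N = \widetilde{\tau}\bigl(\phi(D^* \circ E)\bigr).
\]
The corollary will then follow once I establish the single identity
\[
\tau(F) = \widetilde{\tau}(\phi(F)) \qquad \forall\, F \in TL_q(2k,2k).
\]

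The key point is that this last identity is not something new to prove: on square diagrams the map $\phi$ coincides with the map $\psi$ of Chen et al.\ used in the proof of Proposition~\ref{insnplus}. By construction of $\psi$, one has $\psi(D) = \frac{\tau(D)}{\widetilde{\tau}(c(D))}\, c(D)$, so applying $\widetilde{\tau}$ yields $\widetilde{\tau}(\psi(D)) = \tau(D)$ immediately. Equivalently, one can read off the same equality from the alternative formula $\phi(D) = \sqrt{N}^{\,k-br(D)}\, c(D)$ combined with the fact, already embedded in the fattening/collapsing correspondence, that $\widetilde{\tau}(c(D)) = \sqrt{N}^{\,br(D)-k}\, \tau(D)$.

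There is no real obstacle here, since the heavy lifting has been done in Proposition~\ref{insnplus}; the only point to be a bit careful about is that the normalization of the Markov traces on $TL_q$ and $NC_N$ is consistent with the chosen relation $q+q^{-1} = \sqrt{N}$, a fact that was verified in the proof of Proposition~\ref{insnplus} (in particular $\tau(\id) = (q+q^{-1})^2 = N = \widetilde{\tau}(\id')$, cf.~\eqref{prptra}). Assembling the two paragraphs above yields
\[
(D|E)_q = \tau(D^* \circ E) = \widetilde{\tau}\bigl(\phi(D^* \circ E)\bigr) = \widetilde{\tau}\bigl(\phi(D)^* \circ \phi(E)\bigr) = (\phi(D)|\phi(E))_N,
\]
which is the claim.
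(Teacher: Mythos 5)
Your proposal is correct and follows essentially the same route as the paper: reduce to square diagrams via $(D|E)_q=\tau(D^*\circ E)$ and the compatibility of $\phi$ with $\circ$ and $*$, then use that on $TL_q(2k,2k)$ the map $\phi$ agrees with $\psi(D)=\frac{\tau(D)}{\widetilde{\tau}(c(D))}c(D)$, so that applying $\widetilde{\tau}$ gives back $\tau(D^*\circ E)$. The paper's proof is exactly this computation, so no further comment is needed.
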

\begin{proof}
The result follows from the following computation:
\begin{align*}
\widetilde{\tau}(\phi(D)^*\circ\phi(E))&=\widetilde{\tau}(\phi(D^*\circ E))\\
&=\widetilde{\tau}\left(\frac{\tau(D^*\circ E)}{\widetilde{\tau}(c(D^*\circ E))}c(D^*\circ E)\right)\\
&=\tau(D^*\circ E).
\end{align*}
\end{proof}

We shall apply \cite[Proposition 2.15]{Lem13b} to the special case of the free product $\mathbb{G}*SU_q(2)$, for any compact matrix quantum groups of Kac type $\mathbb{G}$ and to the objects $s(\alpha)=b\otimes\alpha\otimes b$, $\alpha\in \Irr(\mathbb{G})$. We need to introduce several notation.

\begin{nota}\label{Pp}
Let $k,l\in \mathbb{N}$ and $\alpha_1,\dots,\alpha_k,\beta_1,\dots,\beta_l\in \Irr(\mathbb{G})$. We put $[\alpha]=\{b,\alpha_1,b,\dots,b,\alpha_k,b\}$, $[\beta]=\{b,\beta_1,b,\dots,b,\beta_l,b\}$. We denote by $$NC_{\mathbb{G}*SU_{q}(2)}([\alpha],[\beta])\subset NC(3k,3l)$$ the set of non-crossing partitions $D\in NC(3k,3l)$ of $\{1,\dots,3(k+l)\}$ such that one can write $D=P\cup p$ as a disjoint union of partitions where:
\begin{enumerate}
\item[$\bullet$] $P$ is a non-crossing partition on the points $i\in\{1,\dots,3(k+l)\}$ with $i\equiv 0,2 \mod 3$. The points of $P$ are decorated by the fundamental corepresentation $b$ of $SU_{q}(2)$. We denote $T_P$ the natural linear map arising from $P$ as in Definition \ref{canocons}.
\item[$\bullet$] $p$ is a non-crossing partition on the points $i\in\{1,\dots,3(k+l)\}$ with $i\equiv 1 \mod 3$. The points of $p$ are decorated by the $\mathbb{G}$-representations $\alpha_i,\beta_j$ on top and bottom,
\item[$\bullet$] The blocks $B\in p$, $B=U_{B}\cup L_{B}$ of $p$ are decorated by $\mathbb{G}$-morphisms $S_B$.
\end{enumerate}
We denote $S=\bigotimes_{B}S_{B}$ with the order on the blocks of $p$ as in Notation \ref{ordb} and $$V^{D,S}=(\sigma_D^L)^*\circ (T_P\otimes S)\circ \sigma_D^U : \bigotimes_{i=1}^k\mathbb{C}^2\otimes H^{\alpha_i}\otimes\mathbb{C}^2\to\bigotimes_{j=1}^l\mathbb{C}^2\otimes H^{\beta_{j}}\otimes\mathbb{C}^2,$$
where $\sigma_D^U=(\id\otimes\rho_D^U)\lambda_D^U$ and $\sigma_D^L=(\id\otimes\rho_D^L)\lambda_D^L$ with:
\begin{enumerate}
\item[$\bullet$] $\lambda_D^U : \bigotimes_{i=1}^k\mathbb{C}^2\otimes H^{\alpha_{i}}\otimes\mathbb{C}^2\to (\mathbb{C}^2)^{\otimes 2k}\otimes\bigotimes_{i=1}^kH^{\alpha_i}$, $\bigotimes_i(x_i\otimes y_i \otimes x'_i)\mapsto\bigotimes_i(x_i\otimes x_i')\otimes\bigotimes_{i'}y_{i'}$,
\item[$\bullet$] $\rho_D^U : \bigotimes_{i=1}^kH^{\alpha_i}\to \bigotimes_{B\in p} H^{U_{B}}$, $\bigotimes_iy_i\mapsto\bigotimes_{B\in p}\bigotimes_{i\in U_{B}}y_i$,
\item[$\bullet$] $\lambda_D^L : \bigotimes_{j=1}^l\mathbb{C}^2\otimes H^{\beta_{j}}\otimes\mathbb{C}^2\to (\mathbb{C}^2)^{\otimes 2l}\otimes\bigotimes_{j=1}^lH^{\alpha_j}$, $\bigotimes_j(x_j\otimes y_j \otimes x'_j)\mapsto\bigotimes_j(x_j\otimes x_j')\otimes\bigotimes_{j'}y_{j'}$,
\item[$\bullet$] $\rho_D^L : \bigotimes_{j=1}^lH^{\beta_i}\to \bigotimes_{B\in p} H^{L_{B}}$, $\bigotimes_jy_j\mapsto\bigotimes_{B\in p}\bigotimes_{j\in L_{B}}y_j$
\end{enumerate}
\end{nota}


\begin{lem}\label{homfreepr}
Let $\alpha_1,\dots,\alpha_k,\beta_1,\dots,\beta_l\in \Irr(\mathbb{G})$. Then with the above notation
\begin{align}
\label{ff}\Hom_{\mathbb{H}}&\left(\bigotimes_{i=1}^ks(\alpha_i);\bigotimes_{j=1}^ls(\beta_j)\right)\\
&=\emph{span}\left\{V^{D,S} : D\in NC_{\mathbb{G}*SU_{q}(2)}([\alpha],[\beta]),\ S \emph{ as in Notation \ref{Pp}}\right\}\label{ff1}.
\end{align}
\end{lem}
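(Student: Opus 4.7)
The plan is to establish the identification in two stages: first, reduce the computation of $\Hom_{\mathbb{H}}$ to an intertwiner computation inside the ambient free product $\mathbb{G}*SU_q(2)$; second, describe these intertwiners combinatorially via the known structure of the free product.

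For the first stage, observe that $C(\mathbb{H}) \subset C(\mathbb{G}*SU_q(2))$ by construction, and that each basic corepresentation $s(\alpha_i) = b\otimes \alpha_i \otimes b$ is really a corepresentation of $\mathbb{G}*SU_q(2)$ whose matrix coefficients happen to lie in the subalgebra $C(\mathbb{H})$. Consequently the intertwiner relation $(1\otimes T)\bigl(\bigotimes_i s(\alpha_i)\bigr) = \bigl(\bigotimes_j s(\beta_j)\bigr)(1\otimes T)$ is equivalent whether checked in $M\otimes C(\mathbb{H})$ or in the larger algebra $M\otimes C(\mathbb{G}*SU_q(2))$. Hence
\[
\Hom_{\mathbb{H}}\bigl(\bigotimes_i s(\alpha_i); \bigotimes_j s(\beta_j)\bigr) = \Hom_{\mathbb{G}*SU_q(2)}\bigl(\bigotimes_i s(\alpha_i); \bigotimes_j s(\beta_j)\bigr).
\]

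For the second stage, I would invoke \cite[Proposition 2.15]{Lem13b} applied to the free product $\mathbb{G}*SU_q(2)$ with the concatenated sequences of generators $(b,\alpha_1,b,\ldots,b,\alpha_k,b)$ upstairs and $(b,\beta_1,b,\ldots,b,\beta_l,b)$ downstairs. That result describes intertwiners between tensor products of corepresentations coming from a free product as a span of ``free'' morphisms built from non-crossing partitions whose blocks each lie inside one of the two factors: a block entirely on $b$-decorated points contributes an $SU_q(2)$-intertwiner (i.e.\ a Temperley–Lieb map between the appropriate powers of $b$), while a block entirely on $\mathbb{G}$-decorated points contributes a $\mathbb{G}$-intertwiner $S_B$. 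This directly gives the decomposition $D=P\cup p$ of Notation \ref{Pp}, with $P$ supported on the positions $\equiv 0,2\pmod 3$ and $p$ on the positions $\equiv 1\pmod 3$. The morphism attached to such a $D$ is then $T_P\otimes \bigotimes_B S_B$, read after the canonical reshuffle grouping the $\mathbb{C}^2$ factors on one side and the $H^{\alpha_i},H^{\beta_j}$ on the other. This reshuffle is precisely what the isometries $\sigma_D^U$ and $\sigma_D^L$ of Notation \ref{Pp} implement, so the resulting intertwiner takes the claimed form $V^{D,S} = (\sigma_D^L)^*(T_P\otimes S)\sigma_D^U$.

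The main obstacle is bookkeeping rather than conceptual: one has to check that the tensor-factor reordering performed by $\sigma_D^U,\sigma_D^L$ is compatible both with the block ordering fixed in Notation \ref{ordb} and with the way that \cite[Proposition 2.15]{Lem13b} encodes a morphism attached to a non-crossing partition in a free product. In particular, one must verify that the span of the $V^{D,S}$ exhausts the intertwiner space without redundancy. Linear independence follows from combining the independence of the $T_P$ for distinct $P$ (which uses $N\ge 4$ via Proposition \ref{tensorcat}, applied to the $SU_q(2)$-side through the isomorphism $\phi : TL_q(2k,2l)\to NC_N(k,l)$ of Proposition \ref{insnplus}) with the freeness on the $\mathbb{G}$-side of the morphisms $S_B$ across distinct blocks, in the spirit of Remark \ref{theyrefree}.
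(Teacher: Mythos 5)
Your proposal is correct and follows essentially the same route as the paper: identify the relevant Hom-spaces of $\mathbb{H}$ with those computed in the ambient free product $\mathbb{G}*SU_q(2)$ (you make explicit the fullness step the paper leaves implicit), then invoke \cite[Proposition 2.15]{Lem13b} to get the decorated non-crossing partition description $D=P\cup p$, with the reshuffling isometries $\sigma_D^U,\sigma_D^L$ accounting for the bookkeeping. The closing remarks on linear independence are harmless but not needed, since the lemma only asserts a spanning equality.
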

\begin{proof}
The Lemma follows from \cite[Proposition 2.15]{Lem13b} in the case of the free product of two compact matrix groups and form the description of the generating set $\mathcal{D}SU_{q}(2)$, we recalled above (\ref{genon}).

By \cite[Proposition 2.15]{Lem13b}, intertwiners in $\mathbb{G}*SU_q(2)$ are linear combinations of compositions and tensor products of maps $\id\otimes R\otimes id$ where $R$ is:
\begin{enumerate}
\item[$\bullet$] either an intertwiner $R : \alpha_{i_1}\otimes\dots\otimes\alpha_{i_m}\to\beta_{j_1}\otimes\dots\otimes\beta_{j_n}$ in $\mathbb{G}$
\item[$\bullet$] or $\id : b\to b$, $\cap : 1\to b\otimes b$, $\cup : b\otimes b\to 1$.
\end{enumerate}
The inclusion of the space (\ref{ff}) in the space (\ref{ff1}) then follows since we restrict to intertwiners of the type: $(b\otimes\alpha_1\otimes b)\otimes\dots\otimes(b\otimes\alpha_k\otimes b)\to(b\otimes\beta_1\otimes b)\otimes\dots\otimes(b\otimes\beta_l\otimes b)$.


The inclusion $\supset$ holds because any diagram in $NC_{\mathbb{G}*SU_q(2)}$ decomposes as a vertical concatenation of diagrams $\{|\}$, $\{\cap\}$, $\{\cup\}$ and non-crossing partitions $p$ whose points are decorated by $\mathbb{G}$-representations and blocks by $\mathbb{G}$-morphisms. 
\end{proof}

Thanks to the previous result, we shall construct isomorphisms between certain $\mathbb{G}\wr_*S_{N}^+$-Hom spaces and $\mathbb{H}$-Hom spaces and proceed towards proving the monoidal equivalence result we announced in the beginning of this section. 

From a non-crossing partition $D\in NC_{\mathbb{G}*SU_{q}(2)}([\alpha],[\beta])$, we can construct a new non-crossing partition $\widetilde{D}$ decorated by $\mathbb{G}$-representations and by $\mathbb{G}$-morphisms on its blocks. These diagrams will allow us to construct the monoidal equivalence we mentioned in the introduction of this section. We use the notations of Section \ref{intfreec} and Notation \ref{Pp}. In particular, we use the twisting isomorphisms $$t_p^{U}\in\mathcal{B}\left(\bigotimes_{i=1}^k(\mathbb{C}^N\otimes H^{\alpha_i}), (\mathbb{C}^N)^{\otimes k}\otimes\bigotimes_{B(p)} H^{U_{B(p)}}\right)$$ and $$t_p^{L}\in\mathcal{B}\left(\bigotimes_{j=1}^l(\mathbb{C}^N\otimes H^{\beta_j}), (\mathbb{C}^N)^{\otimes l}\otimes\bigotimes_{B(p)} H^{L_{B(p)}}\right)$$ defined in Notation \ref{cantwist}. They can be decomposed as follows:
\begin{nota}
We put $t_p^U=(\id\otimes g_p^U)f_p^U$, $t_p^L=(\id\otimes g_p^L)f_p^L$ with 
\begin{enumerate}
\item[$\bullet$] $f_p^U : \bigotimes_{i=1}^k\mathbb{C}^N\otimes H^{\alpha_{i}}\to (\mathbb{C}^N)^{\otimes k}\otimes\bigotimes_{i=1}^kH^{\alpha_i}$, $\bigotimes_i(x_i\otimes y_i)\mapsto\bigotimes_ix_i\otimes\bigotimes_{i'}y_{i'}$,
\item[$\bullet$] $g_p^U : \bigotimes_{i=1}^kH^{\alpha_i}\to \bigotimes_{p\in B} H^{U_{B}}$, $\bigotimes_iy_i\mapsto\bigotimes_{p\in B}\bigotimes_{i\in U_{B}}y_i$,
\item[$\bullet$] $f_p^L : \bigotimes_{j=1}^l\mathbb{C}^N\otimes H^{\beta_{j}}\to (\mathbb{C}^N)^{\otimes l}\otimes\bigotimes_{j=1}^lH^{\beta_j}$, $\bigotimes_j(x_j\otimes y_j)\mapsto\bigotimes_jx_j\otimes\bigotimes_{j'}y_{j'}$,
\item[$\bullet$] $g_p^L : \bigotimes_{j=1}^lH^{\beta_j}\to \bigotimes_{p\in B} H^{L_{B}}$, $\bigotimes_jy_j\mapsto\bigotimes_{p\in B}\bigotimes_{j\in U_{B}}y_j$.
\end{enumerate}
\end{nota}

\begin{rque}\label{egrho}
Let $D\in NC_{\mathbb{G}*SU_{q}(2)}([\alpha],[\beta]),\ D=P\cup p$. From the definitions, it follows immediately that $g_p^L=\rho_D^L$ and $g_p^U=\rho_D^U$.
\end{rque}

\begin{defi} Let $k,l\in \mathbb{N}$ and $\alpha_1,\dots,\alpha_k,\beta_1,\dots,\beta_l\in \Irr(\mathbb{G})$. Let $$D\in NC_{\mathbb{G}*SU_{q}(2)}([\alpha],[\beta]),\ D=P\cup p$$ with a decoration of the blocks by a morphism $S$ as above in Notation \ref{Pp}.

We define $\widetilde{D}$ as follows: we identify each point decorated by a $\mathbb{G}$-representation with the $2$ adjacent points decorated by the fundamental representation $b$ of $SU_{q}(2)$ and we keep the same decoration by the $\mathbb{G}$-representations. $\widetilde{D}$ is then the quotient non-crossing partition generated by $D$. In other words, $\widetilde{D}$ is the coarsest non-crossing partition such the image $\bar B$ under the identification above of any block $B\in D$, $B\subset\{1,\dots,3(k+l)\}$, is included in a block $\widetilde{B}$ of $\widetilde{D}$. Notice that with our notation, $\widetilde{D}=c(P)$.
 
A block $\widetilde{B}$ of $\widetilde{D}$ is decorated by the tensor product of the maps $S_B$ decorating the blocks $B\in p$ such that $\bar B\subset \widetilde{B}$. The resulting map $\widetilde{S}$ is:
$$\widetilde{S}=g_{\widetilde{D}}^L\circ (g_p^L)^*\circ\bigotimes_{B(p)}S_{B(p)}\circ g_p^U\circ(g_{\widetilde{D}}^U)^*$$

\end{defi}
Now, we can prove:

\begin{prp}\label{intsim}
Let $N\ge4$ and $k,l\in \mathbb{N}$. For all $\alpha_1,\dots,\alpha_k,\beta_1,\dots,\beta_l\in \Irr(\mathbb{G})$, there exists a linear isomorphism
\begin{align*}
\varphi : \Hom_{\mathbb{H}}&(s(\alpha_1)\otimes\dots\otimes s(\alpha_k) ; s(\beta_1)\otimes\dots\otimes s(\beta_l))\\
&\overset{\sim}{\longrightarrow} \Hom_{\mathbb{G}\wr_*S_{N}^+}(r(\alpha_1)\otimes\dots\otimes r(\alpha_k); r(\beta_1)\otimes\dots\otimes r(\beta_l))
\end{align*}
such that 
\begin{enumerate}
\item[$\bullet$] $\varphi(\id)=\id$,
\item[$\bullet$] $\varphi(Q\otimes R)=\varphi(Q)\otimes\varphi(R)$ for all $\mathbb{H}$-morphisms $Q,R$,
\item[$\bullet$] $\varphi(Q^*)=\varphi(Q)^*$ for all $\mathbb{H}$-morphisms $Q$,
\item[$\bullet$] $\varphi(Q\circ R)=\varphi(Q)\circ\varphi(R)$ for all composable $\mathbb{H}$-morphisms $Q,R$.
\end{enumerate}
\end{prp}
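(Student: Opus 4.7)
The plan is to construct $\varphi$ by translating the combinatorial description of the $\mathbb{H}$-intertwiners in Lemma \ref{homfreepr} into the description of the $\mathbb{G}\wr_{*}S_{N}^{+}$-intertwiners in Theorem \ref{bigthm}, using Proposition \ref{insnplus} (the equivalence $\phi : TL_{q}(2k,2l) \to NC_{N}(k,l)$) to convert the Temperley-Lieb part of a diagram into a non-crossing partition. Concretely, for a spanning element $V^{D,S}$ of $\Hom_{\mathbb{H}}(\bigotimes_i s(\alpha_i), \bigotimes_j s(\beta_j))$ with $D = P \cup p$, I set
\[
\varphi(V^{D,S}) \;=\; N^{\frac{k+l}{4} - \frac{br(P)}{2}} \, U^{\widetilde{D},\widetilde{S}},
\]
where $\widetilde{D} = c(P) \in NC_{\mathbb{G}}([\alpha],[\beta])$ and $\widetilde{S}$ is the tensored $\mathbb{G}$-morphism defined in the notation preceding the statement. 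The normalization scalar is dictated by the explicit form of $\phi$ in Proposition \ref{insnplus}.

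First I verify that $\varphi$ is well-defined, linear and bijective. Surjectivity is immediate: every $U^{\widetilde{p},\widetilde{S}}$ of Theorem \ref{bigthm} arises from a pair $(D,S)$ where $P$ is the fattening of $\widetilde{p}$ and $p$ has the same block structure as $\widetilde{p}$, with $S=\widetilde{S}$. For injectivity and well-definedness, I compare dimensions: Corollary \ref{corbas}(1) gives a closed formula for the $\mathbb{G}\wr_{*}S_{N}^{+}$-side, and the $\mathbb{H}$-side produces the same number because, by Lemma \ref{homfreepr} together with Remark \ref{theyrefree}, the $V^{D,S}$ form a free family once $S$ is chosen block by block in a basis of $\Hom_{\mathbb{G}}(\alpha(U_B),\beta(L_B))$: this uses the linear independence of the Temperley-Lieb maps $T_P$ from Proposition \ref{tensorcat}, valid for $N\geq 4$.

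Compatibility with the identity, the tensor product and the involution follows immediately from Proposition \ref{insnplus}(1)--(3) applied to the Temperley-Lieb part $P$, together with the factorizations $\widetilde{S_1 \otimes S_2} = \widetilde{S_1} \otimes \widetilde{S_2}$ and $\widetilde{S^*} = \widetilde{S}^*$. The main obstacle is the composition rule. When composing $V^{D_1,S_1} \circ V^{D_2,S_2}$, closed loops in the Temperley-Lieb part contribute factors of $q + q^{-1} = \sqrt{N}$, while closed $\mathbb{G}$-blocks contribute traces of $\mathbb{G}$-morphisms. On the other side, composing $U^{\widetilde{D_1},\widetilde{S_1}} \circ U^{\widetilde{D_2},\widetilde{S_2}}$ produces factors of $N$ per closed block under $T_{p\circ q} = N^{-b(p,q)}T_p\circ T_q$ (Proposition \ref{tensorcat}(2)), plus the same $\mathbb{G}$-morphism traces on closed $\mathbb{G}$-blocks. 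The matching of the $TL_q$-scalars with the $NC_N$-scalars is exactly the content of Proposition \ref{insnplus}(4), or equivalently the isometry statement of Corollary \ref{crlisom}; the $\mathbb{G}$-morphism traces match tautologically. The bookkeeping thus reduces to checking that the normalization exponent $\tfrac{k+l}{4}-\tfrac{br(P)}{2}$ adds correctly under composition, which is a direct consequence of the fact that $\phi$ is a monoidal $C^*$-functor in Proposition \ref{insnplus}.
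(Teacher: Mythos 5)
Your overall strategy is the same as the paper's: send $V^{D,S}$ to (a multiple of) $U^{\widetilde{D},\widetilde{S}}$, obtain surjectivity from the fattening operation, and reduce compatibility with $\otimes$, $*$ and $\circ$ to Proposition \ref{insnplus}. The step that fails is your justification of well-definedness and injectivity. The family $\{V^{D,S}\}$ of Lemma \ref{homfreepr} is only a spanning family, not a free one, even when the $S_B$ run over bases of the spaces $\Hom_{\mathbb{G}}(\alpha(U_B),\beta(L_B))$: different pairs $(D,S)$ with the same collapse $(\widetilde{D},\widetilde{S})$ give the \emph{same} operator. Concretely, take $k=0$, $l=2$, $\beta_1=\beta_2=1_{\mathbb{G}}$, and let $P$ be the nested pairing of the four $b$-points (the fattening of the one-block partition): both $p=\{\beta_1,\beta_2\}$ and $p=\{\beta_1\},\{\beta_2\}$ are admissible together with $P$ and yield the same map, so counting $(D,S)$ with $S$ in bases gives $3$ spanning vectors, whereas $\dim\Hom_{\mathbb{H}}(1;(b\otimes b)^{\otimes 2})=\dim\Hom_{\mathbb{G}\wr_*S_N^+}(1;u^{\otimes 2})=2$. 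Remark \ref{theyrefree} concerns the family $(T_p\otimes S)_{p,S}$ with $p\in NC(k,l)$ on the $\mathbb{G}\wr_*S_N^+$ side and does not transfer to the diagrams $D=P\cup p$, so your freeness claim is false and your dimension comparison would in fact overcount.

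More importantly, even a correct dimension count cannot make $\varphi$ well defined: you define $\varphi$ on a linearly dependent spanning set, so you must show that every relation $\sum_i\lambda_iV^{D_i,S_i}=0$ is sent to a relation among the $U^{\widetilde{D_i},\widetilde{S_i}}$. This is precisely what the paper proves via the Gram-matrix identity $\Tr\bigl((V^{D,S})^*V^{D',S'}\bigr)=\Tr\bigl((U^{\widetilde{D},\widetilde{S}})^*U^{\widetilde{D'},\widetilde{S'}}\bigr)$, obtained by splitting off the $TL_q$ part (Corollary \ref{crlisom}) from the $\mathbb{G}$-morphism part; this single isometry argument simultaneously gives well-definedness, injectivity, and the fact that the $V^{D,S}$ with $\widetilde{D}=p$ form a basis, which is also what legitimizes verifying the composition rule only on such representatives. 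Your proposal contains no substitute for this step, so the central point of the proof is missing. The normalization scalar $N^{\frac{k+l}{4}-\frac{br(P)}{2}}$ you insert is a secondary issue: whichever normalization one adopts must be justified by exactly such a trace comparison, and it does not repair the gap above.
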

\begin{proof}

With the above notation and the one of Theorem \ref{bigthm}, we define $\varphi$ by $\varphi\left(V^{D,S}\right)=U^{\widetilde{D},\widetilde{S}}.$
The fact that this map is well defined will follow from the following lemma:
\begin{lem} For all $D=P\cup p, D'=P\cup p'\in NC_{\mathbb{G}*SU_q(2)}([\alpha],[\beta])$ and any decoration $S,S'$ of the blocks of $D,D'$ we have
$$\Tr\left(\left(V^{D,S}\right)^*V^{D',S'}\right)={\Tr}\left(\left(U^{\widetilde{D},\widetilde{S}}\right)^*U^{\widetilde{D'},\widetilde{S'}}\right).$$
\end{lem}
\begin{proof}
We first compute
\begin{align*}
&\Tr\left((V^{D,S})^*V^{D',S'}\right)=\Tr\left((\sigma_D^U)^* (T_P^*\otimes S^*) \sigma_D^L (\sigma_{D'}^L)^* (T_{P'}\otimes S') \sigma_{D'}^U\right)\\
&=\Tr\left((\lambda_D^U)^*(\id\otimes\rho_D^U)^* (T_P^*\otimes S^*) (\id\otimes\rho_D^L)\lambda_D^L (\lambda_{D'}^L)^*(\id\otimes\rho_{D'}^L)^* (T_{P'}\otimes S') (\id\otimes\rho_{D'}^U)\lambda_{D'}^U\right).\\
\end{align*}
Using the facts that $\lambda_{D'}^U(\lambda_{D}^U)^*=\id$, $\lambda_{D}^L(\lambda_{D'}^L)^*=\id$ and that $\Tr$ is a trace, we obtain
\begin{align*}
&\Tr\left((V^{D,S})^*V^{D',S'}\right)=\Tr\left((\id\otimes\rho_D^U)^* (T_P^*\otimes S^*) (\id\otimes\rho_{D}^L)(\id\otimes\rho_{D'}^L)^* (T_{P'}\otimes S') (\id\otimes\rho_{D'}^U)\right)\\
&=\Tr\left((T_P^*T_{P'})\otimes (\rho_D^U)^*S^*\rho_D^L(\rho_{D'}^L)^*S'\rho_{D'}^U\right)=\Tr\left(T_P^*T_{P'}\right)\Tr\left((\rho_D^U)^*S^*\rho_D^L(\rho_{D'}^L)^*S'\rho_{D'}^U\right).
\end{align*}
We get similarly
\begin{align*}
{\Tr}\left(\left(U^{\widetilde{D},\widetilde{S}}\right)^*U^{\widetilde{D'},\widetilde{S'}}\right)&={\Tr}\left(T_{\widetilde{D}}^*T_{\widetilde{D}'}\right){\Tr}\left((g_{\widetilde{D}}^U)^*\widetilde{S}^*g_{\widetilde{D}}^L(g_{\widetilde{D}'}^L)^*\widetilde{S}'g_{\widetilde{D}'}^U\right).
\end{align*}
Now, since $\widetilde{D}=c(P)$, we obtain $$\Tr\left(T_P^*T_{P'}\right)=\tau(P^*P')=\widetilde{\tau}(\widetilde{D}^*\widetilde{D}')={\Tr}\left(T_{\widetilde{D}}^*T_{\widetilde{D}'}\right)$$ by Corollary \ref{crlisom}. On the other hand, we have
\begin{align*}
(\rho_D^U)^*S^*\rho_D^L(\rho_{D'}^L)^*S'\rho_{D'}^U&=(\rho_D^U)^*\circ\bigotimes_{B(p)}S_{B(p)}^*\circ\rho_D^L(\rho_{D'}^L)^*\circ\bigotimes_{B(p')}S'_{B(p')}\circ\rho_{D'}^U
\end{align*}
and
\begin{align*}
&(g_{\widetilde{D}}^U)^*\widetilde{S}^*g_{\widetilde{D}}^L(g_{\widetilde{D}'}^L)^*\widetilde{S}'g_{\widetilde{D}'}^U\\
&=(g_{\widetilde{D}}^U)^*\left(g_{\widetilde{D}}^L\circ (g_p^L)^*\circ\bigotimes_{B(p)}S_{B(p)}\circ g_p^U\circ(g_{\widetilde{D}}^U)^*\right)^*g_{\widetilde{D}}^L(g_{\widetilde{D}'}^L)^*\left(g_{\widetilde{D}'}^L(g_{p'}^L)^*\circ\bigotimes_{B(p')}S'_{B(p')}\circ g_{p'}^U(g_{\widetilde{D}'}^U)^*\right)g_{\widetilde{D}'}^U\\
&=(g_p^U)^*\circ\bigotimes_{B(p)}S_{B(p)}^*\circ g_p^L(g_{p'}^L)^*\circ\bigotimes_{B(p')}S'_{B(p')}\circ g_{p'}^U.
\end{align*}
The lemma follows from these calculations and Remark \ref{egrho}.
\end{proof}
Now, since the trace on $TL_q$ (and $NC_N$) is faithful, we deduce from the above lemma that the map $\varphi$ map is well defined. Indeed, the map $(D,S)\mapsto V^{D,S}$ factorizes through the quotient map $(D,S)\mapsto (\widetilde{D},\widetilde{S})$, for all $D\in NC_{\mathbb{G}*SU_q(2)}([\alpha],[\beta])$ and all decorations $[S]$ of the blocks of $D$ since it is isometric. Notice that it proves also that $\varphi$ is injective.



Let $U^{p,S}$ be an element of the basis of the vector space $\Hom_{\mathbb{G}\wr_*S_{N}^+}(r(\alpha_1)\otimes\dots\otimes r(\alpha_k); r(\beta_1)\otimes\dots\otimes r(\beta_l))$ obtained in Theorem \ref{bigthm}. Consider $P$ the Temperley-Lieb diagram obtained by the fattening operation on $p$ and put $D=P\cup p$. Notice that in this case, we have $\widetilde{D}=p$, $\widetilde{S}=S$. It is then clear that $\varphi(V^{D,S})=U^{\widetilde{D},\widetilde{S}}=U^{\widetilde{D},S}$ and the $\varphi$ is surjective.

In particular, we have proved that the morphisms $V^{D,S}\in \Hom_{\mathbb{H}}$ such that $\widetilde{D}=p$ generate a basis of the Hom-spaces in $\mathbb{H}$.


Now notice at the level of diagrams describing the categories of morphisms, that $\varphi$ satisfies all first three relations of the statement. The relation $\varphi(D\circ E)=\varphi(D)\circ\varphi(E)$ for all $D,E\in D\in NC_{\mathbb{G}*SU_{q}(2)}$ follows from Proposition \ref{insnplus}. 

Indeed, let $D=P\cup p$ and $E=P'\cup p'$ as in Notation \ref{Pp}. We may assume that $\widetilde{D}=p$ and $\widetilde{E}=p'$. When we compose diagrams $D=P\cup p$ and $E=P'\cup p'$, we compose on the one hand Temperley-Lieb diagrams $P\circ P'$ and on the other hand non-crossing partitions $p\circ p'$. Hence closed loops in $P\circ P'$ and closed blocks in $p\circ p'$ might appear:
\begin{enumerate}
\item[$\bullet$] We know with notation of Proposition \ref{insnplus} that $\phi(P\circ P')=\phi(P)\circ \phi(P')$,
\item[$\bullet$] Closed blocks in $p\circ p'\subset\varphi(D\circ E)$ correspond to scalars coefficients $$\mathbb{C}\to \alpha_t^p\otimes\dots\otimes\alpha_{t+m}^p\to\beta_{r}^{p'}\otimes\dots\otimes\beta_{r+n}^{p'}\to \mathbb{C},$$ for some $\mathbb{G}$-representations $\alpha_t^p,\dots,\alpha_{t+m}^p\in B$, ($B\in p$) and $\beta_r^{p'},\dots,\beta_{r+n}^{p'}\in B'$, ($B'\in p'$). These scalars also appear precisely in $\widetilde{D}\circ\widetilde{E}$.
\end{enumerate}
Altogether, $$\varphi(D\circ E)=\varphi(D)\circ\varphi(E).$$

\end{proof}

We can now prove the main result of this section.

\begin{thm}\label{resmono}
Let $N\ge4$ and $0<q\le 1$ such that $q+q^{-1}=\sqrt{N}$. Let $\mathbb{G}$ be a compact matrix quantum group of Kac type. Then $$\mathbb{G}\wr_*S_N^+\simeq_{mon} \mathbb{H}$$ where $\widehat{\mathbb{H}}$ is the subgroup of $\widehat{\mathbb{G}}*\widehat{SU_{q}(2)}$ with 
$$C(\mathbb{H}):=C^*-\langle b_{ij}ab_{kl}\ |\ 1\le i,j,k,l\le 2, a\in C(\mathbb{G})\rangle\subset C(\mathbb{G})*C(SU_{q}(2)),$$
\begin{align*}
\Delta(b_{ij}ab_{kl})=\sum_{r,s,t}b_{ir}ab_{kt}\otimes b_{rj}ab_{tl}\in C(\mathbb{H})\otimes C(\mathbb{H})
\end{align*}
and $B=(b_{ij})_{ij}$ is the generating matrix of $SU_{q}(2)$.
\end{thm}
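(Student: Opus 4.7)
My plan is to deduce the theorem from Proposition~\ref{intsim} via the abstract extension principle of Lemma~\ref{extcomp}: the former produces the needed $*$-monoidal equivalence on the generating subcategories, and the latter propagates it to the full categories of representations. I will therefore proceed in three concise steps.

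First, let $\mathcal{R}_{0}\subset\Rep(\mathbb{G}\wr_{*}S_{N}^{+})$ be the rigid monoidal $C^{*}$-subcategory generated by $\{r(\alpha):\alpha\in\Irr(\mathbb{G})\}$, whose Hom spaces are described by Theorem~\ref{bigthm}, and let $\mathcal{S}_{0}\subset\Rep(\mathbb{H})$ be the subcategory generated by $\{s(\alpha):\alpha\in\Irr(\mathbb{G})\}$, whose Hom spaces are described by Lemma~\ref{homfreepr}. Proposition~\ref{intsim} provides, for every choice of $\alpha_{1},\ldots,\alpha_{k},\beta_{1},\ldots,\beta_{l}\in\Irr(\mathbb{G})$, a linear isomorphism from $\Hom_{\mathbb{H}}\bigl(s(\alpha_{1})\otimes\cdots\otimes s(\alpha_{k});s(\beta_{1})\otimes\cdots\otimes s(\beta_{l})\bigr)$ onto $\Hom_{\mathbb{G}\wr_{*}S_{N}^{+}}\bigl(r(\alpha_{1})\otimes\cdots\otimes r(\alpha_{k});r(\beta_{1})\otimes\cdots\otimes r(\beta_{l})\bigr)$ preserving identity, tensor product, involution and composition. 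This is exactly a unitary monoidal $*$-equivalence $\varphi:\mathcal{S}_{0}\to\mathcal{R}_{0}$ sending $s(\alpha)$ to $r(\alpha)$.

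Second, I need to check that $\mathcal{R}_{0}$ and $\mathcal{S}_{0}$ generate the ambient representation categories under direct sums and subobjects, which is the hypothesis required by Lemma~\ref{extcomp}. On the $\mathbb{H}$ side this is immediate: by definition $C(\mathbb{H})$ is generated by the elements $b_{ij}ab_{kl}$ with $a\in C(\mathbb{G})$, i.e.\ by the matrix coefficients of the $s(\alpha)$ as $\alpha$ ranges over $\Rep(\mathbb{G})$, so Woronowicz's Tannaka--Krein duality forces every irreducible of $\mathbb{H}$ to appear as a subobject of a tensor product of $s(\alpha)$'s. On the wreath product side, the coefficient $u_{ij}\bar{\nu}_{i}(\alpha_{kl})$ of $r(\alpha)$ together with the magic-unitary identity $\sum_{j}u_{ij}=1$ recovers $\bar{\nu}_{i}(\alpha_{kl})$, while $u_{ij}=u_{ij}\bar{\nu}_{i}(1)$ is itself a matrix coefficient of $r(1_{\mathbb{G}})$; consequently $\Pol(\mathbb{G}\wr_{*}S_{N}^{+})$ is generated by the coefficients of the basic corepresentations, and Tannaka--Krein again yields the required generation.

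Finally, I apply Lemma~\ref{extcomp} to the equivalence $\varphi:\mathcal{S}_{0}\to\mathcal{R}_{0}$. By the previous step, the completions of $\mathcal{S}_{0}$ and $\mathcal{R}_{0}$ under direct sums and subobjects are respectively $\Rep(\mathbb{H})$ and $\Rep(\mathbb{G}\wr_{*}S_{N}^{+})$, and the lemma produces an extension $\widetilde{\varphi}:\Rep(\mathbb{H})\to\Rep(\mathbb{G}\wr_{*}S_{N}^{+})$ which is again a unitary monoidal $*$-equivalence; this is precisely what Definition~\ref{mono} demands, giving $\mathbb{G}\wr_{*}S_{N}^{+}\simeq_{mon}\mathbb{H}$. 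The genuine work of the argument is already buried in Proposition~\ref{intsim} (and through it in Proposition~\ref{insnplus}, which transports Temperley--Lieb traces onto the weighted Markov trace on $NC_{N}$ and accounts for the defining equation $q+q^{-1}=\sqrt{N}$); at this final stage the only residual subtlety is the generation verification, which turns on the simple observation that the magic-unitary relations $\sum_{j}u_{ij}=1$ let us extract $\bar{\nu}_{i}(a)$ from the coefficients of the $r(\alpha)$.
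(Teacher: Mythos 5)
Your proposal is correct and follows essentially the same route as the paper: the authors likewise obtain the equivalence on the generating subcategories from Proposition~\ref{intsim} and extend it to the completions via Lemma~\ref{extcomp}, noting that the generators $s(\alpha)$ and $r(\alpha)$ correspond. Your explicit verification that the coefficients of the $r(\alpha)$ (using the magic-unitary relations) and of the $s(\alpha)$ generate the respective algebras merely spells out what the paper asserts when identifying $\Rep(\mathbb{H})$ and $\Rep(\mathbb{G}\wr_{*}S_{N}^{+})$ with the completions of $R_{0}$ and $S_{0}$.
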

\begin{proof}
With the notation of this section and the one of Section \ref{intfreec}, the representation categories of $\mathbb{H}$ and $\mathbb{G}\wr_*S_N^+$ are respectively given by the completions (in the sense of Woronowicz \cite{Wor88}) of $$R_0=\left\{s(\alpha_1)\otimes\dots\otimes s(\alpha_k) :     \begin{array}{lll}
        k\in\N\\
        \alpha_i\in\Irr(\mathbb{G})\\
        i=1,\dots,k
    \end{array}\right\} , 
    \left\{V^{D,S} : \begin{array}{llll}
       k,l\in\N\\    
        \alpha_i,\beta_j\in\Irr(\mathbb{G})\\
        D\in NC_{\mathbb{G}*SU_q(2)}([\alpha],[\beta])\\
        S \text{ decorates the blocks of } D
    \end{array}\right\}$$

and 

$$S_0=\left\{r(\alpha_1)\otimes\dots\otimes r(\alpha_k) :     \begin{array}{ll}
        k\in\N\\
        \alpha_i\in\Irr(\mathbb{G})
    \end{array}\right\} , 
    \left\{U^{p,S} : \begin{array}{llll}
       k,l\in\N\\    
        \alpha_i,\beta_j\in\Irr(\mathbb{G})\\
        p\in NC_{\mathbb{G}}([\alpha],[\beta])\\
        S \text{ decorates the blocks of } p
    \end{array}\right\}.$$

We have already constructed in Proposition \ref{intsim} an equivalence $\varphi$ between the monoidal rigid $C^*$-tensor categories $R_0,S_0$. Thanks to Lemma \ref{extcomp}, one can extend it to the completions $R,S$.

To conclude, notice that $\varphi : R_0\to S_0$ is an equivalence of categories in such a way that the generators $s(\alpha)$ of $C(\mathbb{H})$ and $r(\alpha)$ of $C(\mathbb{G}\wr_*S_N^+)$ are in correspondence. We deduce easily that $\varphi$ induces a bijection $$\Irr(\mathbb{H})\to\Irr(\mathbb{G}\wr_*S_N^+),\ \omega'\mapsto\omega$$ where $\omega$ is constructed as in the proof of Lemma \ref{extcomp} by
$$\omega={\varphi}(v){\varphi}(v^*)r(\alpha_1)\otimes\dots\otimes r(\alpha_k)$$
for all $\text{Irr}(\mathbb{H})\ni \omega'\subset\alpha_1\otimes\dots\otimes\alpha_k$ and with $v$ the isometry $$v : \omega'\to s(\alpha_1)\otimes\dots\otimes s(\alpha_k).$$
The fact that $\varphi$ is a monoidal equivalence between $\mathbb{H}$ and $\mathbb{G}\wr_*S_N^+$ then follows.
\end{proof}

We get as an immediate corollary the description of the fusion rules of $\mathbb{G}\wr_*S_N^+$. Even if this result can be formulated as Proposition \ref{fusdansprod}, we give a closer formulation as \cite[Theorem 7.3]{BV09} and \cite[Theorem 2.25]{Lem13b}.
\begin{defi}
Let $M=\langle \Irr(\mathbb{G})\rangle$ be the monoid formed by the words over $\Irr(\mathbb{G})$. We endow $M$ with the following operations:
\begin{enumerate}
\item Involution: $\overline{(\alpha_{1},\dots ,\alpha_{k})}=(\bar\alpha_{k},\dots, \bar\alpha_{1})$,
\item concatenation: for any two words, we set $$(\alpha_{1},\dots,\alpha_{k}),(\beta_{1},\dots,\beta_{l})=(\alpha_{1},\dots,\alpha_{{k-1}},\alpha_{k},\beta_{1},\beta_{2},\dots,\beta_{l}),$$
\item Fusion: for two non-empty words, we set 
\begin{align*}
(\alpha_{1},\dots,\alpha_{k}).(\beta_{1},\dots,\beta_{l})&=(\beta_{1},\dots,\alpha_{{k-1}},\alpha_{k}\otimes\beta_{1},\beta_{2},\dots,\beta_{l})\\
&=\left(\beta_{1},\dots,\alpha_{{k-1}},\bigoplus_{\gamma\subset\alpha_k\otimes\beta_1}\gamma,\beta_{2},\dots,\beta_{l}\right)\\
&=\bigoplus_{\gamma\subset\alpha_k\otimes\beta_1}\left(\beta_{1},\dots,\alpha_{{k-1}},\gamma,\beta_{2},\dots,\beta_{l}\right),
\end{align*}
where each $\gamma$ appears in the direct sum with its multiplicity $\gamma\subset\alpha_k\otimes\beta_1$.
\end{enumerate}
\end{defi}

\begin{thm}\label{nonalter}
The irreducible corepresentations of $\mathbb{G}\wr_*S_N^+$ can be labelled $\omega(x)$ with $x\in M$, with involution $\overline{\omega(x)}=\omega({\overline{x}})$ and the fusion rules:
$$\omega(x)\otimes\omega(y)=\sum_{x=u,t\ ;\ y=\overline{t},v}\omega({u,v})\ \oplus \displaystyle\sum_{\substack{x=u,t\ ;\ y=\overline{t},v\\ u\ne\emptyset,v\ne\emptyset}} \omega({u.v})$$ and we have for all $\alpha\in \Irr(\mathbb{G})$, $\omega(\alpha)=r(\alpha)\ominus \delta_{\alpha,1_{\mathbb{G}}}1$.
\end{thm}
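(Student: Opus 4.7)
My plan is to transfer the question to $\mathbb{H}$ via the monoidal equivalence of Theorem \ref{resmono}, and then read off the fusion rules from Wang's description of $\Irr(\mathbb{G}\ast SU_q(2))$ in Theorem \ref{frprodcrucial}. Since $\mathbb{H}$ is generated by the corepresentations $s(\alpha) = b\otimes\alpha\otimes b$, $\alpha \in \Irr(\mathbb{G})$, every irreducible of $\mathbb{H}$ is a sub-representation of some tensor product of $s(\alpha)$'s, and hence identifies with an alternating irreducible of $\mathbb{G}\ast SU_q(2)$. I will use repeatedly that the irreducibles of $SU_q(2)$ are $\{b_k\}_{k\ge 0}$ with $\overline{b_k}=b_k$ and $b_j\otimes b_k = \bigoplus b_m$ (summed over $|j-k|\le m\le j+k$ with $m\equiv j+k\bmod 2$); in particular $b\otimes b = 1\oplus b_2$.

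First, I would set up the labeling $x\mapsto \omega(x)$. Given $x\in M$, write it as $(1_{\mathbb{G}}^{m_0},\gamma_1,1_{\mathbb{G}}^{m_1},\dots,\gamma_r,1_{\mathbb{G}}^{m_r})$ with the $\gamma_i$'s non-trivial, and define
\[ \omega(x) := b_{2m_0+1}\,\gamma_1\,b_{2m_1+2}\,\gamma_2\cdots\gamma_r\,b_{2m_r+1}\in\Irr(\mathbb{G}\ast SU_q(2)) \]
when $r\ge 1$, with the conventions $\omega(\emptyset)=1$ and $\omega(1_{\mathbb{G}}^m)=b_{2m}$ when $r=0$. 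By Theorem \ref{frprodcrucial} these are pairwise non-equivalent irreducibles of $\mathbb{G}\ast SU_q(2)$, and one checks directly that the involution $x\mapsto \bar x$ on words corresponds to conjugation of representations. Next I would identify them with the irreducibles of $\mathbb{H}$: for $\alpha\ne 1_{\mathbb{G}}$, $s(\alpha)=b\alpha b$ is already alternating and hence irreducible, so $\omega(\alpha)=s(\alpha)\leftrightarrow r(\alpha)$ under Theorem \ref{resmono}; for $\alpha=1_{\mathbb{G}}$, $s(1_{\mathbb{G}})=b\otimes b = 1\oplus b_2$, so $\omega(1_{\mathbb{G}})=b_2\leftrightarrow r(1_{\mathbb{G}})\ominus 1$, matching the displayed formula.

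To check exhaustion, every irreducible of $\mathbb{H}$ appears in some $s(\alpha_1)\otimes\dots\otimes s(\alpha_k)$; applying Wang's rule to each interior $b\otimes b$ either yields an interior $b_{2m+2}$-letter (choosing the $b_2$ summand) or collapses two adjacent $\alpha_i$'s through an $\Irr(\mathbb{G})$-fusion (choosing the $1$ summand), so that every summand has the prescribed form $\omega(x)$. It remains to verify the fusion rule by computing $\omega(x)\otimes\omega(y)$ inside $\mathbb{G}\ast SU_q(2)$ via Theorem \ref{frprodcrucial}. The final $b$-letter of $\omega(x)$ and the initial $b$-letter of $\omega(y)$ fuse in $SU_q(2)$; all non-zero summands of the fusion recombine the two words into a concatenated alternating word producing $\omega(x,y)$, while the trivial summand (which appears exactly when the two indices match) triggers one more level of recursion: the ending $\gamma_r$ of $\omega(x)$ now collides with the starting $\gamma'_1$ of $\omega(y)$ in $\Irr(\mathbb{G})$, and each non-trivial $\eta\subset \gamma_r\otimes\gamma'_1$ contributes a fusion summand $\omega(u.v)$, while the case $\eta=1_{\mathbb{G}}$ (i.e.\ $\bar\gamma_r=\gamma'_1$) feeds the recursion one step further.

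Iterating, the surviving summands are indexed exactly by factorisations $x=(u,t)$, $y=(\bar t,v)$ as in the statement: the $\omega(u,v)$ summand arises when the cancellation terminates at a boundary $b_{2m+1}$-letter, and the $\omega(u.v)$ summand (with $u,v\ne\emptyset$) arises when it terminates at a non-trivial $\Irr(\mathbb{G})$-fusion. The main obstacle will be the combinatorial bookkeeping in this recursion when interior runs of $1_{\mathbb{G}}$'s are present in $x$ or $y$, so that the parity conventions for the $b_c$-letters (odd on the boundary, even in the interior) are preserved throughout, and so that the recursive cancellation process is correctly matched with the factorisations appearing in the statement.
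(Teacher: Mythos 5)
Your proposal is correct and follows essentially the same route as the paper: transfer the problem through the monoidal equivalence of Theorem \ref{resmono} to $\mathbb{H}\subset\mathbb{G}\ast SU_q(2)$, use Theorem \ref{frprodcrucial} to identify $\Irr(\mathbb{H})$ with alternating words whose $SU_q(2)$-letters have odd indices at the ends and even indices inside, and your map $x\mapsto b_{2m_0+1}\gamma_1 b_{2m_1+2}\cdots\gamma_r b_{2m_r+1}$ is exactly the paper's reduced-word bijection $\psi(\omega'(\alpha_1,\dots,\alpha_k))=[b\alpha_1b^2\cdots b^2\alpha_kb]$. The only (presentational) difference is that the paper records the fusion rules in the recursive form of Lemma \ref{fusdansprod}, with words allowed to contain trivial letters so that every junction computation stays at $b\otimes b$, whereas you unfold the recursion into the closed factorisation formula directly on reduced words — the bookkeeping with $b_{2m+1}\otimes b_{2m'+1}$ that you flag is precisely what that convention avoids, and it does go through.
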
 

\begin{proof}
To prove this theorem, we shall describe the irreducible corepresentations and fusion rules in $\mathbb{H}$. This follows from Theorem \ref{frprodcrucial} used in the following lemma:


\begin{lem}\label{fusdansprod}
The irreducible representations of $\mathbb{H}$ can be labelled $\omega'(\alpha_1,\dots,\alpha_k)$, $(\alpha_1,\dots,\alpha_k)\in \langle \Irr(\mathbb{G})\rangle$, with involution $\overline{\omega'}(\alpha_1,\dots,\alpha_k)=\omega'(\bar\alpha_k,\dots,\bar\alpha_1)$ and fusion rules:
\begin{align*}
\omega'(\alpha_1,\dots,\alpha_k)\otimes\omega'(\beta_1,\dots,\beta_l)=\omega'(\alpha_1,\dots,\alpha_k,&\beta_1,\dots,\beta_l)\oplus\bigoplus_{\gamma\subset\alpha_k\otimes\beta_1}\omega'(\alpha_1,\dots,\gamma,\dots,\beta_l)\\
&\oplus\delta_{\bar\beta_1,\alpha_k}\omega'(\alpha_1,\dots,\alpha_{k-1})\otimes\omega'(\beta_2,\dots,\beta_l)
\end{align*}
where for all $\alpha\in\Irr(\mathbb{G})$, $\omega'(\alpha)=b\otimes\alpha\otimes b\ominus\delta_{\alpha,1_{\mathbb{G}}}1$.
\end{lem}

\begin{proof}

By Theorem \ref{frprodcrucial}, the irreducible representations of $\mathbb{G}*SU_q(2)$ can be indexed by $M'=\Irr(\mathbb{G})*\Irr(SU_q(2))$. The words $b^0$ and $1_{\mathbb{G}}$ are identified to the neutral element $\emptyset\in M'$. The elements of this free product can be written as ``reduced" words $w=b^{l_1}\gamma_1b^{l_2}\dots \gamma_{{k-1}}b^{l_k}$ with 
\begin{enumerate}
\item[$\bullet$] $l_1,l_k\ge0$, $\forall i\in\{2,\dots,k-1\}$ $l_i\ge1$, $\gamma_i\in \Irr(\mathbb{G})\setminus\{1_{\mathbb{G}}\}$ in the case $k>1$,
\item[$\bullet$] $w=b^l$ in the case $k=1$ for some $l\ge0$.
\end{enumerate}
The involution on $M'$ is given by $\overline{b^{l_1}\gamma_1b^{l_2}\dots \gamma_{l_{k-1}}b^{l_k}}=b^{l_k}\bar\gamma_{k-1}b^{l_{k-1}}\dots \bar\gamma_{{1}}b^{l_1}$. The definition of $\mathbb{H}$ implies that the irreducible representations of $\mathbb{H}$ are sub-representations of the tensor products
$$(b\otimes\alpha_1\otimes b)\otimes\dots\otimes(b\otimes\alpha_K\otimes b), K\in\N, \alpha_i\in\Irr(\mathbb{G})$$
which decomposes as a direct sum of irreducible representations $b^{l_1}\gamma_1b^{l_2}\dots \gamma_{l_{k-1}}b^{l_k}\in M'$ with 
\begin{enumerate}
\item[$\bullet$] $l_1,l_k$ odd integers and $l_i\ge2$ even integers for all $1<i<k$,
\item[$\bullet$] For all $i$, $\gamma_i\subset \alpha_{i_1}\otimes\dots\otimes\alpha_{i_t}$ for some $i_1,\dots,i_t\in\{1,\dots,k\}$, 
\item[$\bullet$] $w=b^{2K}$ obtained in the case $1\subset\alpha_1\otimes\dots\otimes\alpha_K$.
\end{enumerate}
We denote $N'\subset M'$ the set of all such words. Note that $\Irr(\mathbb{H})=N'$ is generated by the words $b\alpha b$, $\alpha\in \Irr(\mathbb{G})$. The description of the fusion rules binding irreducible representations in $N'$ follows from Theorem \ref{frprodcrucial}:
\begin{equation}\label{altto}
vb\alpha\otimes \beta bw=\sum_{1\ne t\subset\alpha\otimes\beta}vbtbw+\delta_{\alpha,\bar\beta}(v\otimes w),
\end{equation} 
for all $v,w\in N'$.
We then have a bijection $$\psi : \langle\Irr(\mathbb{G})\rangle\to N'=\Irr(\mathbb{H}), \omega'(\alpha_1,\dots,\alpha_k)\mapsto [b\alpha_1 b^2\dots b^2\alpha_kb]$$ where $[b\alpha_1 b^2\dots b^2\alpha_kb]$ denotes the ``reduced" word obtained by deleting the letters $\alpha_i=1_{\mathbb{G}}$. For any $p,q\in N$ and $\alpha,\beta\in\Irr(\mathbb{G})$, we have by (\ref{altto}):
\begin{align*}
pb\alpha b\otimes b\beta b q&=pb\alpha b^2\beta b q\oplus \sum_{1\ne t\subset\alpha\otimes\beta}pbt b q\oplus\delta_{\bar\beta,\alpha}p\otimes q\\
&=\psi(\omega'(p,\alpha,\beta,q))\oplus \sum_{1\ne t\subset\alpha\otimes\beta}\psi(\omega'(p,t,q))\oplus\delta_{\bar\beta,\alpha}\psi(\omega'(p))\otimes\psi(\omega'(q)).
\end{align*}
Hence, the fusion rules for $\mathbb{H}$ can indeed be described as in the statement. The statement on the involution follows from the following calculation:
\begin{align*}
\overline{\psi\left(\omega'(\alpha_1,\dots,\alpha_k)\right)}&=\overline{b^{l_1}\alpha_1b^{l_2}\dots \alpha_{l_{k-1}}b^{l_k}}\\
&=b^{l_k}\bar\alpha_{k-1}b^{l_{k-1}}\dots \bar\alpha_{1}b^{l_1}\\
&=\psi\left(\omega'(\alpha_k,\dots,\alpha_1)\right)\\
&=\psi\left(\overline{\omega'}(\alpha_1,\dots,\alpha_k)\right).
\end{align*}
\end{proof}
The description of the irreducible representations of $\mathbb{G}\wr_*S_N^+$ then follows from the monoidal equivalence $\mathbb{G}\wr_*S_N^+\simeq_{mon}\mathbb{H}$.
\end{proof}
The dimension of the irreducible representations of $\mathbb{G}\wr_*S_N^+$ are computed in the appendix of this paper. 

\section{Operator algebraic properties for free wreath product quantum groups}\label{secalgop}
In this section, we collect several corollaries that we can deduce from the monoidal equivalence we proved in the previous section. In particular, we study approximation properties and certain stability results under free wreath product of compact quantum groups. One can refer to \cite{Bra12}, \cite{Fre13}, \cite{DFS13} for definitions and several results in the cases of free compact quantum groups. We will use the following definition form \cite{CFY13}:

\begin{defi}
A discrete quantum group $\widehat{\mathbb{G}}$ is said to have the central almost completely positive approximation property (central ACPAP) if there is a net of central functionals $\phi_i$ on $\Pol(\mathbb{G})$ such that:
\begin{enumerate}
\item for all $i$, the convolution operator $T_{\phi_i}=(\phi_i\otimes \id)\circ\Delta$ induces a unital completely positive map on $C_r(\mathbb{G})$,
\item for all $i$, the operator $T_{\phi_i}$ is approximated by finitely supported central multipliers with respect to the $cb$-norm,
\item for any representation $\alpha\in \mathbb{G}$, $\lim_i\phi_i(\chi_{\alpha})/d_{\alpha}$=1.
\end{enumerate}
\end{defi}

It is proved in \cite{CFY13} that this property both implies the Haagerup property and the $W^*CCAP$ for $L^{\infty}(\mathbb{G})$ with respect to the Haar state. Notice that in our case, $\mathbb{G}$ is of Kac type, and the central ACPAP is equivalent to the the ACPAP without assuming the central property of the states. This follows from an averaging method from \cite{Bra12}.

Recall that we denote by $u=(u_{ij})_{ij}$ the generating magic unitary of $C(S_N^+)$ of dimension $N$. In this section, we use the results of \cite{CFY13}, including:

\begin{thm}(\/\cite[Theorem 22]{CFY13}\/)
The dual of $SU_{q}(2)$ has the central ACPAP for all $q\in[-1,1]\setminus\{0\}$.
\end{thm}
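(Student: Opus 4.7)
The plan is to construct, for $q \in [-1,1] \setminus \{0\}$, a net of central functionals $(\phi_t)_{t > 0}$ on $\Pol(SU_q(2))$ satisfying the three conditions of central ACPAP, with $\phi_t \to \epsilon$ pointwise as $t \to 0^+$. The irreducible corepresentations of $SU_q(2)$ can be parametrized as $(u^n)_{n \in \N}$, with quantum dimensions $d_n = [n+1]_q$ and the classical $SU(2)$ fusion rules; a central functional $\phi$ is thus determined by the sequence $a_n := \phi(\chi_n)$.

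The candidate net would come from the Schoenberg correspondence on $\widehat{SU_q(2)}$. First I would produce a proper central conditionally negative definite function $\psi : \N \to \R_{\geq 0}$ via an unbounded $1$-cocycle on $\widehat{SU_q(2)}$ (for $|q|<1$, one obtains it by transporting the Vaes--Vergnioux cocycle on $\widehat{O_N^+}$ through the monoidal equivalence $SU_q(2) \simeq_{\mathrm{mon}} O_N^+$ with $N = (q+q^{-1})^2$, since cocycles and their properness are preserved under monoidal equivalence). Then set $\phi_t := e^{-t\psi}$. By the quantum Schoenberg theorem each $\phi_t$ is central and positive definite, so $T_{\phi_t}$ is a unital completely positive map on $C_r(SU_q(2))$, giving condition (1); and $\phi_t(n)/d_n \to 1$ for each $n$ as $t \to 0^+$, giving condition (3).

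The hard step is condition (2), approximation of $T_{\phi_t}$ in cb-norm by finitely supported central multipliers. I would try the natural truncations $\phi_t^{(K)}(n) := \phi_t(n) \mathbf{1}_{n \leq K}$ and show $\|T_{\phi_t - \phi_t^{(K)}}\|_{\mathrm{cb}} \to 0$ as $K \to \infty$. This reduces to estimating the cb-norm of each isotype indicator $\phi^{[n]}(k) := \delta_{k,n}$ by a polynomial in $n$: combined with the exponential decay of $\phi_t(n)$ coming from properness of $\psi$, summability then follows. The polynomial bound is an analogue of Haagerup's inequality for free groups, adapted to the Temperley--Lieb category $\mathrm{TL}(q+q^{-1})$ which is the representation category of $SU_q(2)$; combinatorially it rests on the polynomial growth of the dimension of Temperley--Lieb diagrams at a given depth.

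The main obstacle is precisely this Temperley--Lieb Haagerup-type estimate. The cleanest way to bypass a direct combinatorial grind would be to invoke Freslon's CCAP result for $\widehat{O_N^+}$ (strengthened to central ACPAP) and transport it through the monoidal equivalence $SU_q(2) \simeq_{\mathrm{mon}} O_N^+$, using again that central multipliers and their cb-norms are intrinsic to the representation category and hence preserved by monoidal equivalence (De Rijdt--Vander Vennet). The remaining classical cases $q = \pm 1$ reduce to the compact Lie group $SU(2)$, where the claim is immediate since $\widehat{SU(2)}$ has compact dual and one can take Cesaro-type cutoffs of the identity as central cp multipliers.
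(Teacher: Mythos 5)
First, note that the paper you are working from does not prove this statement at all: it is imported verbatim as \cite[Theorem 22]{CFY13} and used as a black box, so the only meaningful comparison is with the proof in that reference. There, the logical order is the opposite of the one you propose: the central ACPAP is established \emph{directly} for $\widehat{SU_q(2)}$, for every $q\in[-1,1]\setminus\{0\}$, by constructing an explicit net of central states on $\Pol(SU_q(2))$ and proving the cb-approximation by finitely supported central multipliers through direct estimates exploiting $0<|q|<1$; the monoidal equivalence is then used to \emph{export} the property to the quantum groups $O_F^+$, not to import it. Your proposed ``clean bypass'' — transporting a (strengthened) central version of Freslon's result for $\widehat{O_N^+}$ through $SU_q(2)\simeq_{mon}O_N^+$ — cannot prove the stated theorem: the Kac quantum groups $O_N^+$ are monoidally equivalent to $SU_q(2)$ only for the countable family of parameters with $|q+q^{-1}|=N\in\mathbb{N}$ (and with the sign of $q$ forced by $F\bar F=1$), so all other values of $q$ are out of reach; covering them via general $O_F^+$ would presuppose exactly the theorem being proved. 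The same coverage problem affects your construction of the proper central conditionally negative definite function $\psi$ by transporting a cocycle from $\widehat{O_N^+}$ (and, as an aside, the proper cocycle on $\widehat{O_N^+}$ is itself a delicate, later result — Brannan's Haagerup property produces central states, not a proper cocycle).

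The second, more serious gap is in your step (2). Your summability argument requires a polynomial bound on the cb-norms of the isotypic projections $p_n$, i.e.\ a Haagerup/rapid-decay type inequality for $\widehat{SU_q(2)}$, which you justify by the polynomial growth of the number of Temperley--Lieb diagrams at fixed depth. For $0<|q|<1$ the dual $\widehat{SU_q(2)}$ is not unimodular (not of Kac type) and has exponential growth, and property RD fails in this setting (Vergnioux); the quantity governing $\|p_n\|_{cb}$ is the quantum dimension $[n+1]_q\sim |q|^{-n}$, which grows exponentially, not the classical count of Temperley--Lieb diagrams. So the estimate you rely on is false as stated, and the truncation argument does not close unless the central multipliers are shown to decay fast enough to beat this exponential growth, with positivity of the corresponding central functionals established for a parameter range that exists only because $SU_q(2)$ is not Kac (there is no Haar-preserving averaging onto the character algebra here, so positivity cannot be obtained by the conditional-expectation trick available in the Kac case). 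Supplying these two ingredients — the explicit central states with the right decay and the quantitative cb estimates replacing RD — is precisely the technical content of the proof in \cite{CFY13}, and it is missing from your proposal. Your remark on the endpoints $q=\pm1$ is essentially fine: there the dual is amenable and of Kac type, so finitely supported completely positive multipliers exist and can be centralized by averaging.
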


The following result concerns exactness.

\begin{thm}
Let $\mathbb{G}$ be a matrix compact quantum group of Kac type and $N\ge4$ and $0<q\le1$ such that $q+q^{-1}=\sqrt{N}$. Then the following are equivalent:
\begin{enumerate}
\item $C_r(\mathbb{G})$ is exact.
\item $C_r(\mathbb{G}\wr_*S_N^+)$ is exact.
\end{enumerate}
\end{thm}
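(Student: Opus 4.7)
The strategy is to transfer exactness through the monoidal equivalence $\mathbb{G}\wr_{*}S_N^+\simeq_{mon}\mathbb{H}$ established in Theorem~\ref{resmono}, relying on three standard stability properties of exactness of reduced $C^{*}$-algebras: invariance under monoidal equivalence for compact quantum groups of Kac type, stability under reduced free products of discrete quantum groups, and descent to discrete quantum subgroups via a $C^{*}$-inclusion with faithful conditional expectation.

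\emph{Forward direction $(1)\Rightarrow(2)$.} Assume $C_r(\mathbb{G})$ is exact. Since $SU_q(2)$ is coamenable, $C_r(SU_q(2))=C(SU_q(2))$ is nuclear, in particular exact. By stability under reduced free products, $C_r(\mathbb{G}*SU_q(2))$ is exact. Theorem~\ref{resmono} exhibits $\widehat{\mathbb{H}}$ as a discrete quantum subgroup of $\widehat{\mathbb{G}*SU_q(2)}$, so $C_r(\mathbb{H})\subset C_r(\mathbb{G}*SU_q(2))$ as a $C^{*}$-subalgebra with faithful conditional expectation, and exactness descends to $C_r(\mathbb{H})$. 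Finally, the monoidal equivalence transfers exactness to $C_r(\mathbb{G}\wr_{*}S_N^+)$.

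\emph{Reverse direction $(2)\Rightarrow(1)$.} Assume $C_r(\mathbb{G}\wr_{*}S_N^+)$ is exact. I will realize $C_r(\mathbb{G})$ as a $C^{*}$-subalgebra. The defining commutation relation $\nu_1(a)u_{1i}=u_{1i}\nu_1(a)$ together with the magic unitarity $u_{11}^{2}=u_{11}$ makes the assignment $\phi:a\mapsto u_{11}\nu_1(a)$ a $*$-algebra homomorphism from $\Pol(\mathbb{G})$ into $\Pol(\mathbb{G}\wr_{*}S_N^+)$. Indeed $\phi(a)\phi(b)=u_{11}^{2}\nu_1(a)\nu_1(b)=u_{11}\nu_1(ab)=\phi(ab)$, and $\phi(a)^{*}=\nu_1(a^{*})u_{11}=u_{11}\nu_1(a^{*})=\phi(a^{*})$. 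Since $u_{11}\nu_1(\alpha_{kl})$ is the $((1,k),(1,l))$-matrix coefficient of the basic corepresentation $r(\alpha)$ of Definition~\ref{defralpha}, Corollary~\ref{corbas} combined with the Peter--Weyl orthogonality of the Haar state $h$ of $\mathbb{G}\wr_{*}S_N^+$ (recall $r(\alpha)=\omega(\alpha)\oplus\delta_{\alpha,1_{\mathbb{G}}}1$ with $\omega(\alpha)$ irreducible nontrivial for $\alpha\ne 1_{\mathbb{G}}$, and $h(u_{11})=1/N$) yields
\begin{align*}
h\bigl(u_{11}\nu_1(b)\bigr)=\tfrac{1}{N}\,h_{\mathbb{G}}(b)\quad\text{for all } b\in\Pol(\mathbb{G}).
\end{align*}
By the trace property and the commutation relation, this gives $h(|u_{11}\nu_1(a)|^{2})=\tfrac{1}{N}h_{\mathbb{G}}(a^{*}a)$. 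Since $h$ is a faithful trace on $C_r(\mathbb{G}\wr_{*}S_N^+)$, the kernel of the composite $\Pol(\mathbb{G})\xrightarrow{\phi}C(\mathbb{G}\wr_{*}S_N^+)\xrightarrow{\lambda_h}C_r(\mathbb{G}\wr_{*}S_N^+)$ coincides with the kernel of $\lambda_{h_{\mathbb{G}}}$, so $\phi$ factors through an injective $*$-homomorphism $\tilde{\phi}:C_r(\mathbb{G})\hookrightarrow C_r(\mathbb{G}\wr_{*}S_N^+)$, which is automatically isometric. Exactness of the ambient algebra then descends to its $C^{*}$-subalgebra $C_r(\mathbb{G})$.

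The main technical ingredient is the reverse direction, where compression by the projection $u_{11}$ together with the explicit Peter--Weyl computation of the Haar state realizes $C_r(\mathbb{G})$ inside the reduced wreath product; the forward direction is a routine assembly of known stability results for exactness.
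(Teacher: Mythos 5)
Your forward direction is exactly the paper's argument: coamenability of $SU_q(2)$ gives nuclearity (hence exactness) of $C_r(SU_q(2))$, exactness passes to the reduced free product and to the subalgebra $C_r(\mathbb{H})$, and the monoidal equivalence of Theorem \ref{resmono} transfers it to $C_r(\mathbb{G}\wr_*S_N^+)$ via \cite[Theorem 6.1]{VV07}. Where you genuinely diverge is the converse: the paper settles both implications with the single appeal to Theorem \ref{resmono}, leaving the passage from $C_r(\mathbb{G}\wr_*S_N^+)$ (or $C_r(\mathbb{H})$) back to $C_r(\mathbb{G})$ implicit --- and this is not a formal triviality, since $\Irr(\mathbb{G})\not\subset\Irr(\mathbb{H})$, so $\widehat{\mathbb{G}}$ is not simply a discrete quantum subgroup of $\widehat{\mathbb{H}}$. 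You bypass $\mathbb{H}$ altogether and embed $C_r(\mathbb{G})$ directly into $C_r(\mathbb{G}\wr_*S_N^+)$ by compressing $\nu_1$ with the projection $u_{11}$ and computing $h(u_{11}\nu_1(b))=\tfrac1N h_{\mathbb{G}}(b)$ from Corollary \ref{corbas} and Peter--Weyl orthogonality; this supplies exactly the detail the paper omits and makes the implication $(2)\Rightarrow(1)$ self-contained. One presentational caveat: with domain $\Pol(\mathbb{G})$, ``the kernels coincide (both are zero), hence $\phi$ factors through $C_r(\mathbb{G})$'' is not yet a factorization, because you still need boundedness of $\phi$ for the reduced norm. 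The cleanest repair is to work at the universal level: $a\mapsto u_{11}\bar\nu_1(a)$ is a bounded $*$-homomorphism $C(\mathbb{G})\to C(\mathbb{G}\wr_*S_N^+)$ (Remark \ref{inclinj}), the trace identity extends by continuity, and faithfulness of $h$ on $C_r(\mathbb{G}\wr_*S_N^+)$ then shows that the kernel of $\lambda_h\circ\bigl(u_{11}\bar\nu_1(\cdot)\bigr)$ equals $\ker\lambda_{h_{\mathbb{G}}}$, yielding the injective, hence isometric, embedding $C_r(\mathbb{G})\hookrightarrow C_r(\mathbb{G}\wr_*S_N^+)$; alternatively, a GNS argument on the cyclic subspace generated by $\Lambda_h(u_{11})$ gives the norm equality directly. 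With that adjustment your proof is correct, and for the reverse implication it is more explicit than the paper's.
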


\begin{proof}
Recall that exactness for the reduced $C^*$-algebras of compact quantum groups is stable under monoidal equivalence \cite[Theorem 6.1]{VV07}. Since $SU_{q}(2)$ is co-amenable, we have that $C_r(SU_{q}(2))=C_u(SU_{q}(2))$ is nuclear and thus exact. We then obtain the result by Theorem \ref{resmono}.
\end{proof}

\begin{thm}
Let $\mathbb{G}$ be a matrix compact quantum group and $N\ge4$. Then the following are equivalent:
\begin{enumerate}
\item\label{Gimp} The dual of $\mathbb{G}$ has the central ACPAP.
\item\label{Courimp} The dual of $\mathbb{G}\wr_*S_N^+$ has the central ACPAP.
\end{enumerate}
\end{thm}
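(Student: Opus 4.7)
The plan is to use the monoidal equivalence $\mathbb{G}\wr_*S_N^+\simeq_{mon}\mathbb{H}$ from Theorem \ref{resmono} to transfer the central ACPAP between $\mathbb{G}\wr_*S_N^+$ and an explicit discrete quantum subgroup of the free product $\widehat{\mathbb{G}}*\widehat{SU_q(2)}$, for which the central ACPAP can be analyzed through known stability results.

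For the implication (\ref{Gimp})$\Rightarrow$(\ref{Courimp}), I would proceed in three steps. First, combine the assumption that $\widehat{\mathbb{G}}$ has the central ACPAP with Theorem~22 of \cite{CFY13}, which gives the central ACPAP for $\widehat{SU_q(2)}$ for every $0<q\le1$, and invoke stability of the central ACPAP under free products of discrete quantum groups (proved in \cite{Fre13} and \cite{CFY13} by combining Ricard--Xu free-product techniques with the central averaging procedure of \cite{Bra12}) to conclude that $\widehat{\mathbb{G}}*\widehat{SU_q(2)}$ has the central ACPAP. Second, restrict the central multipliers along the discrete quantum subgroup inclusion $\widehat{\mathbb{H}}\subset \widehat{\mathbb{G}}*\widehat{SU_q(2)}$: passing the central multipliers to $\Pol(\mathbb{H})$ and checking that unital complete positivity, finite support, cb-approximation and the pointwise convergence condition on characters are preserved. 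Third, apply invariance of the central ACPAP under monoidal equivalence, established in \cite{CFY13}, to the equivalence of Theorem~\ref{resmono} to transport the property from $\mathbb{H}$ to $\mathbb{G}\wr_*S_N^+$.

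For the reverse implication (\ref{Courimp})$\Rightarrow$(\ref{Gimp}), the plan is to realize $\widehat{\mathbb{G}}$ as a discrete quantum subgroup of $\widehat{\mathbb{G}\wr_*S_N^+}$, and then appeal to stability of the central ACPAP under discrete quantum subgroups. Concretely, I would verify that the assignment
\[
\nu_i(a)\longmapsto a,\qquad u_{ij}\longmapsto\delta_{ij}\,1,\qquad a\in\Pol(\mathbb{G}),
\]
extends to a surjective Hopf $*$-algebra morphism $\pi:\Pol(\mathbb{G}\wr_*S_N^+)\to\Pol(\mathbb{G})$. Indeed, the commutation relation $\nu_k(a)u_{ki}=u_{ki}\nu_k(a)$ becomes trivial since $\delta_{ki}$ is central, the magic unitary relations for $u$ are preserved because $(\delta_{ij})$ is itself a magic unitary (the identity matrix), and a direct check using the Sweedler formula
\[
\Delta\bigl(\nu_i(a)\bigr)=\sum_{k=1}^N\nu_i(a_{(1)})\,u_{ik}\otimes\nu_k(a_{(2)})
\]
shows compatibility with the coproducts. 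This gives the required discrete quantum subgroup structure, and the central ACPAP then descends from $\widehat{\mathbb{G}\wr_*S_N^+}$ to $\widehat{\mathbb{G}}$ by restricting the central multipliers via $\pi$.

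The main obstacle is the bookkeeping of the three stability results for the central ACPAP: stability under free products, passage to discrete quantum subgroups, and monoidal equivalence invariance. Each is present either explicitly or in essentially identical form in \cite{CFY13} (for the Haagerup property the corresponding results go back to \cite{Bra12} and \cite{Fre13}), but verifying that the central and complete positivity requirements are preserved under each operation is the most delicate part. In particular, the free product step requires a genuinely central version of the Ricard--Xu machinery, which is precisely where the central averaging trick of \cite{Bra12}, permitted by the Kac-type assumption on $\mathbb{G}$, becomes essential.
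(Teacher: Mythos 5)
Your implication (\ref{Gimp})$\Rightarrow$(\ref{Courimp}) is essentially the paper's proof: monoidal equivalence with $\mathbb{H}$ (Theorem \ref{resmono}) plus the stability of the central ACPAP under free products and under discrete quantum subgroups from \cite{CFY13}, together with the fact that the central ACPAP is a property of the representation category and hence invariant under monoidal equivalence. That part is fine.

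The converse direction, however, has a genuine gap. The surjective Hopf $*$-homomorphism $\pi:\Pol(\mathbb{G}\wr_*S_N^+)\to\Pol(\mathbb{G})$, $\nu_i(a)\mapsto a$, $u_{ij}\mapsto\delta_{ij}1$, does exist, but it exhibits $\mathbb{G}$ as a \emph{compact} quantum subgroup of $\mathbb{G}\wr_*S_N^+$, i.e.\ it makes $\widehat{\mathbb{G}}$ a \emph{quotient} of the discrete dual $\widehat{\mathbb{G}\wr_*S_N^+}$, not a discrete quantum subgroup of it. A discrete quantum subgroup corresponds to a Hopf $*$-subalgebra spanned by the coefficients of a subset of $\Irr(\mathbb{G}\wr_*S_N^+)$ closed under fusion and conjugation, and no such copy of $\Irr(\mathbb{G})$ exists here (any nontrivial $\omega(\alpha)$ already generates words of arbitrary length by the fusion rules of Theorem \ref{nonalter}). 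Consequently the stability result of \cite{CFY13} you invoke does not apply, and it cannot be replaced by a general principle: approximation properties of this type do not pass to quotients of discrete (quantum) groups, as the classical case of free groups surjecting onto arbitrary groups shows. Moreover, ``restricting the central multipliers via $\pi$'' is not well defined: $\pi$ goes from the big algebra to the small one, and a central multiplier $T_{\phi_i}$ need not preserve $\ker\pi$, since $\ker\pi$ is not a direct sum of coefficient blocks of irreducibles.

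What the paper does instead is to use the injective, non-Hopf embedding $\bar\nu_1:\Pol(\mathbb{G})\hookrightarrow\Pol(\mathbb{G}\wr_*S_N^+)$ of Remark \ref{inclinj}. Using the Sweedler formula $\Delta(\nu_1(\alpha_{jk}))=\sum_{t,s}\nu_1(\alpha_{js})u_{1t}\otimes\nu_t(\alpha_{sk})$ and the centrality of $\phi_i$, one checks that $\nu_1(\Pol(\mathbb{G}))$ is invariant under each $T_{\phi_i}$, and that $T_{\phi_i}$ acts on the coefficients of $\alpha\in\Irr(\mathbb{G})$ by the scalar $\phi_i\big(\nu_1(\chi_\alpha)\chi_u\big)/(d_\alpha N)$, which tends to $1$; unital complete positivity and approximation by finitely supported central multipliers then pass to these restrictions. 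If you want to salvage your approach, you should replace the claimed subgroup argument by this kind of explicit invariance computation along $\nu_1$ (or equivalently consider $\pi\circ T_{\phi_i}\circ\bar\nu_1$ and verify by hand that it is a central multiplier with the required properties).
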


\begin{proof}
We first prove that (\ref{Gimp})$\Rightarrow$(\ref{Courimp}). By Theorem \ref{resmono}, we have $\mathbb{G}\wr_*S_N^+\simeq_{mon}\mathbb{H}$ with $C(\mathbb{H})\subset C(\mathbb{G})*C(SU_{q}(2))$ for some $0<q\le1$ such that $q+q^{-1}=\sqrt{N}$. We know by \cite[Lemma 20, Proposition 21]{CFY13} that the central ACPAP is stable by taking free products and discrete quantum subgroups. The first implication then follows.

We now prove (\ref{Courimp})$\Rightarrow$(\ref{Gimp}). If the dual of $\mathbb{G}\wr_*S_N^+$ has the central ACPAP then there exists a sequence of central multipliers $T_{\phi_i} : \Pol(\mathbb{G}\wr_*S_N^+)\to \Pol(\mathbb{G}\wr_*S_N^+)$ $$T_{\phi_i}=(\phi_i\otimes \id)\circ \Delta=\sum_{w\in \Irr(\mathbb{G}\wr_*S_N^+)}\frac{\phi_i(\chi_w)}{d_w}p_w$$ such that
\begin{enumerate}
\item[$\bullet$] for all $i$, $T_{\phi_i}$ induces a unital, completely positive map on $C_r(\mathbb{G}\wr_*S_N^+)$,
\item[$\bullet$] for all $i$, $T_{\phi_i}$ is approximated by finitely supported central multipliers with respect to the $cb$-norm,
\item[$\bullet$] for any $w\in \Irr(\mathbb{G}\wr_*S_N^+)$, $\lim_i\phi_i(\chi_w)/d_w=1$.
\end{enumerate}
Consider $S_{i} : \Pol(\mathbb{G})\to \Pol(\mathbb{G}\wr_*S_N^+)$ $$S_i:=T_{\phi_i}\circ\bar\nu_1$$ where $\bar\nu_1 : C(\mathbb{G})\to C(\mathbb{G}\wr_*S_N^+)$ is the injective morphism of inclusion of (the first copy of) $C(\mathbb{G})$ in $C(\mathbb{G}\wr_*S_N^+)$, see Remark \ref{inclinj}. We simply denote $\bar\nu_1=\nu_1$. Let $a\in \Pol(\mathbb{G})$: it is a linear combination of coefficients $\alpha_{jk}$, with $\alpha\in \Irr(\mathbb{G})$. On such coefficients $\alpha_{jk}\ne1$, $S_i$ acts as follows:
\begin{align*}
S_i(\alpha_{jk})&=T_{\phi_i}(\nu_1(\alpha_{jk}))=(\phi_i\otimes \id)(\sum_{t,s}\nu_1(\alpha_{js})u_{1t}\otimes\nu_t(\alpha_{sk}))\\
&=\sum_{t,s}\phi_i(\nu_1(\alpha_{js})u_{1t})\nu_t(\alpha_{sk})\\
&=\frac{\phi_i\left(\nu_1\left(\sum_{j}\alpha_{jj}\right)\sum_ru_{rr}\right)}{d_{\alpha}N}\nu_1(\alpha_{jk}),
\end{align*}
where the last equality holds since $\phi_i$ is central. Then, $\nu_1(\Pol(\mathbb{G}))$ is stable under the action of $T_{\phi_i}$. We deduce that $S_i : C_r(\mathbb{G})\to C_r(\mathbb{G})$ is a sequence of unital completely positive maps (by composition of u.c.p. maps $T_{\phi_i}$, $\nu_1$), $$S_i=\sum_{\alpha\in \Irr(\mathbb{G})}\frac{\phi_i(\nu_1(\chi_{\alpha})\chi_u)}{d_{\alpha}N}p_{\alpha}$$ where $\chi_u$ is the character of the fundamental representation of $S_N^+$. Notice that $$\frac{\phi_i(\nu_1(\chi_{\alpha})\chi_u)}{d_{\alpha}N}\to_i1.$$

To conclude, recall that each $T_{\phi_i}$ can be approximated in $cb$-norm by central multipliers $t^i_j$ with finite supports on $\Irr(\mathbb{G}\wr_*S_N^+)$. Then $t_j^i$ is zero except on the coefficients of a finite set of words $(\alpha_1,\dots,\alpha_k)\in \Irr(\mathbb{G}\wr_*S_N^+)$ and then except on a finite number of letters $\alpha\in \Irr(\mathbb{G})$. The converse implication then follows from these observations.
\end{proof}

We end this paper by some concluding remarks and open questions:
\begin{rque}
In particular, the dual of $\mathbb{G}$ has the Haagerup property if and only if the dual of $\mathbb{G}\wr_*S_N^+$ has the Haagerup property.
\end{rque}

\begin{rque}
If the dual of $\mathbb{G}$ has the central ACPAP then $L^{\infty}(\mathbb{G}\wr_*S_N^+)$ has the $W^*CCAP$. Combined with exactness, one could try and prove that $L^{\infty}(\mathbb{G}\wr_*S_N^+)$ has the Akemann-Ostrand property to conclude that it has no Cartan subalgebras. We indeed already know that $L^{\infty}(\mathbb{G}\wr_*S_N^+)$ is non injective since $\mathbb{G}$ (and then $\mathbb{G}\wr_*S_N^+$) is of Kac type and non coamenable.
\end{rque}

\begin{rque}
One could try to find the fusion rules of $\mathbb{G}\wr_*S_N^+$ for non-Kac type compact quantum groups. Similar argument as in the case where $\mathbb{G}$ is the dual of a discrete group, \cite{Lem13b}, should apply to prove that in most cases $C_r(\mathbb{G}\wr_*S_N^+)$ is simple with unique trace and that $L^{\infty}(\mathbb{G})$ is a full type $II_1$-factor. In particular, fullness for $L^{\infty}(\mathbb{G}\wr_*S_N^+)$ would imply the non injective of this von Neumann algebra. Hence, the Akemann-Ostrand property could also be investigated in this setting in order to prove the absence of Cartan subalgebras for $L^{\infty}(\mathbb{G}\wr_*S_N^+)$.
\end{rque}

\section*{Appendix - Dimension formula}
In this section we obtain a dimension formula for the irreducible corepresentations of $\mathbb{G}\wr_*S_N^+$. This is an analogue formula as in the cases $\mathbb{G}=\widehat{\Gamma}$ see \cite[Theorem 9.3]{BV09}, \cite[Corollary 2.2]{Lem13}. In this subsection $\mathbb{G}$ is any compact matrix quantum group of Kac type, $N\ge4$.

By universality, there is a morphism $$\pi : C(\mathbb{G}\wr_*S_N^+)\to C(S_N^+),\ \omega_{ijkl}=v_{kl}^{(i)}u_{ij}\mapsto u_{ij}$$ induced by the morphisms on each factor of the free product $C(\mathbb{G})^{*N}*C(S_N^+)$, $$\pi^{(i)}=\left(\epsilon_{\mathbb{G}}^{(i)}\right)* \id$$ and which passes to the quotient $C(\mathbb{G})*_wC(S_N^+)$ since the image of $\epsilon_{\mathbb{G}}$ lies in $\mathbb{C}$. It corresponds a functor $\pi : \Irr(\mathbb{G}\wr_*S_N^+)\to \Irr(S_N^+)$, sending $r(\alpha), \alpha\in \Irr(\mathbb{G})$ to $u^{\oplus d_{\alpha}}$ where $d_{\alpha}$ denotes the dimension of the $\mathbb{G}$-representation $\alpha$

We denote by $\chi_{\rho}=(\id\otimes \text{Tr})(\rho)$ character associated to $\rho\in\Irr(\mathbb{G}\wr_*S_N^+)$. It is proved in \cite[Proposition 4.8]{Bra12} that the central algebra $C(S_N^+)_0=C^*-\langle\chi_k : k\in\N\rangle$ is isomorphic with $C([0,N])$ via $\chi_k\mapsto  A_{2k}(\sqrt{X})$ where $(A_k)_{k\in\N}$ is the family of dilated Tchebyshev polynomials defined inductively by $A_0=1, A_1=X$ and $A_1A_k=A_{k+1}+A_{k-1}$. 

We use an alternative description of the fusion rules in $\mathbb{G}\wr_*S_N^+$ which can be readily obtained from the proof of Proposition \ref{fusdansprod} where this description is obtained for the compact quantum group $\mathbb{H}$ monoidally equivalent to $\mathbb{G}\wr_*S_N^+$. We can write any $\rho\in\Irr(\mathbb{G}\wr_*S_N^+)\subset\Irr(\mathbb{G})*\Irr(SU_q(2))$ as follows $\rho=b^{l_1}\alpha_1b^{l_2}\dots b^{l_{k-1}}\alpha_{k-1}b^{l_k}$ with
\begin{enumerate}
\item[$\bullet$] $l_1,l_k$ odd integers and $l_i\ge2$ even integers for all $1<i<k$,
\item[$\bullet$] For all $i$, $\alpha_i\ne1_{\mathbb{G}}$,
\item[$\bullet$] $w=a^{2l}$ for some $l\ge0$ in the case $k=1$.
\end{enumerate}
and with the fusion rules recursively obtained by
\begin{equation*}
vb\alpha\otimes \beta bw=vb(\alpha\otimes\beta)bw+\delta_{\alpha,\bar\beta}(v\otimes w),
\end{equation*} 
for all words $v,w\in\Irr(\mathbb{G}\wr_*S_N^+)$. We also use the notation of Theorem \ref{nonalter}.


\begin{prp}
Let $\chi_{\rho}$ be the character of an irreducible corepresentation $\rho\in \Irr(\mathbb{G}\wr_*S_N^+)$. Write $\rho=b^{l_1}\alpha_1\dots b^{l_k}$. Then, identifying $C(S_N^+)_0$ with $C([0,N])$, the image of $\chi_{\rho}$ by $\pi$, say $P_{\rho}$, satisfies $$P_{\rho}(X)=\pi(\chi_{\rho})(X)=\prod_{i=1}^{k-1}d_{\alpha_i}\prod_{i=1}^kA_{l_i}(\sqrt{X}).$$
\end{prp}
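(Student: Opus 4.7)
My approach is to exploit the monoidal equivalence $\mathbb{G}\wr_*S_N^+ \simeq_{\mathrm{mon}} \mathbb{H}$ from Theorem \ref{resmono}: an irreducible $\rho$ corresponds to a reduced word $w = b^{l_1}\alpha_1 \cdots b^{l_k}$ in $\Irr(\mathbb{H}) \subset \Irr(\mathbb{G}*SU_{q}(2))$. The crucial observation is that $w$, viewed in the free product, is literally the tensor product $b^{l_1} \otimes \alpha_1 \otimes \cdots \otimes b^{l_k}$, which happens to be irreducible in $\Irr(\mathbb{G} * SU_{q}(2))$. Its character is therefore the literal product in $\Pol(\mathbb{G}) * \Pol(SU_{q}(2))$:
$$\chi_w = A_{l_1}(\chi_b)\,\chi_{\alpha_1}\,A_{l_2}(\chi_b) \cdots \chi_{\alpha_{k-1}}\,A_{l_k}(\chi_b),$$
using the standard identity $\chi_{b^n} = A_n(\chi_b)$ for $SU_{q}(2)$, which follows from the Clebsch-Gordan rule $b \otimes b^n = b^{n-1} \oplus b^{n+1}$ matching the Chebyshev recursion.

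Next, I would introduce the $*$-homomorphism $\pi_0 := \epsilon_{\mathbb{G}} * \id : \Pol(\mathbb{G}) * \Pol(SU_{q}(2)) \to \Pol(SU_{q}(2))$, which sends each $\chi_{\alpha_i}$ to $d_{\alpha_i}$. Applying it gives
$$\pi_0(\chi_w) = \prod_{i=1}^{k-1} d_{\alpha_i}\, \prod_{i=1}^k A_{l_i}(\chi_b).$$
Since the parity constraints force $\sum_i l_i$ to be even, this polynomial is even in $\chi_b$, hence expressible in terms of the $\chi_{b^{2j}}$. Under the identification of the central subalgebra of the even part of $SU_{q}(2)$ with $C(S_N^+)_0 \cong C([0,N])$ via $\chi_{b^{2j}} \leftrightarrow \chi_j \leftrightarrow A_{2j}(\sqrt{X})$, the Chebyshev multiplication rule $A_a A_c = \sum_{j=|a-c|,\,\mathrm{step}\,2}^{a+c} A_j$ is preserved, so the substitution $\chi_b \leftrightarrow \sqrt{X}$ transforms $\prod_i A_{l_i}(\chi_b)$ into $\prod_i A_{l_i}(\sqrt{X})$, yielding the claimed formula.

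To close the argument, I must identify $\pi$ (restricted to characters) with $\pi_0$ composed with the above central algebra identification. This is checked on the generators: on one side, $\pi(\chi_{r(\alpha)}) = d_\alpha \chi_u \leftrightarrow d_\alpha X$; on the other, $\pi_0(\chi_{s(\alpha)}) = \pi_0(\chi_b \chi_\alpha \chi_b) = d_\alpha \chi_b^2 = d_\alpha(1 + \chi_{b^2}) \leftrightarrow d_\alpha(1 + (X-1)) = d_\alpha X$, so the two maps agree on generators and hence on the entire central subalgebra.

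The main obstacle is making the identification between $\pi$ and $\pi_0$ precise, since the monoidal equivalence is categorical rather than directly an algebra homomorphism. A fully self-contained alternative proceeds by induction on $\sum l_i$ via the fusion rules of Lemma \ref{fusdansprod}: the recursion $\chi_{\omega(1_{\mathbb{G}})} \chi_{\rho'} = \chi_{\rho''} + \chi_{\rho'} + \chi_\rho$ (obtained from $\omega(1_{\mathbb{G}}) \otimes \rho' = \rho'' \oplus \rho' \oplus \rho$ when $\rho'$ arises from lowering some $l_i$ of $\rho$ by two) together with the identity $A_{l_i}(\sqrt{X}) = (X-2) A_{l_i-2}(\sqrt{X}) - A_{l_i-4}(\sqrt{X})$ propagates the formula, while a separate recursion using $b\alpha_1 b \otimes b\alpha_2 b^2 \cdots b$ handles the case of minimal $l_i$'s, relying on the identity $\sum_{s \subset \alpha_1 \otimes \alpha_2} d_s = d_{\alpha_1} d_{\alpha_2}$ where the trivial sub-object contribution is exactly absorbed by the $\delta$-term of the free product fusion rule.
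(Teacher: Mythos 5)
Your main argument is correct, and it takes a genuinely different route from the paper. The paper proves the formula by a direct induction on $\sum_i l_i$, using the recursive fusion rule $vb\alpha\otimes\beta bw=vb(\alpha\otimes\beta)bw+\delta_{\alpha,\bar\beta}(v\otimes w)$ together with the Chebyshev identity $A_2A_l=A_{l+2}+A_l+A_{l-2}$; this forces substantial casework ($K=1$, $K=2$, $L_K\ge 5$, $L_K=3$, $L_K=1$ with subcases on $L_{K-1}$) precisely because tensoring with $\omega(1_{\mathbb{G}})\equiv b^2$ only modifies the outer exponents $l_1,l_k$ directly. You instead transport the computation through the monoidal equivalence of Theorem \ref{resmono}: by Theorem \ref{frprodcrucial} the word $w=b^{l_1}\alpha_1\cdots b^{l_k}$ is irreducible in $\mathbb{G}*SU_q(2)$ and equals the tensor product of its letters, so its character factorizes as $\prod_iA_{l_i}(\chi_b)\prod_i\chi_{\alpha_i}$; applying $\epsilon_{\mathbb{G}}*\id$ replaces each $\chi_{\alpha_i}$ by $d_{\alpha_i}$, and the even polynomials in $\chi_b$ (the parity constraints on reduced words do make $\sum l_i$ even) are identified with $C(S_N^+)_0\cong C([0,N])$ via $\chi_b\mapsto\sqrt{X}$. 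What this buys is the absence of casework and a structural explanation of the formula; what it requires, and what you should state explicitly to close the "main obstacle" you yourself flag, is that both $[\rho]\mapsto\pi(\chi_\rho)$ and the transported map $[\rho]\mapsto$ (image of $\pi_0(\chi_w)$ in $C([0,N])$) are \emph{ring homomorphisms on the common fusion ring} -- the first because $\pi$ is a $*$-homomorphism and characters are multiplicative, the second because the monoidal equivalence preserves the fusion rules (Lemma \ref{fusdansprod}) and the Clebsch--Gordan multiplication of the even characters $\chi_{b^{2j}}$ matches that of the polynomials $A_{2j}(\sqrt{X})$ -- and that this fusion ring is generated as a ring by the classes $[r(\alpha)]$, $\alpha\in\Irr(\mathbb{G})$, which follows by induction on word length from the fusion rules. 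With those two observations your generator check $d_\alpha X=d_\alpha X$ does finish the proof; ``agree on generators, hence everywhere'' is not valid without them. Finally, note that your fallback induction is essentially the paper's argument, but as sketched it is incomplete: the recursion $\omega(1_{\mathbb{G}})\otimes\rho'=\rho''\oplus\rho'\oplus\rho$ only raises an \emph{outer} exponent $l_1$ or $l_k$, and the middle $l_i$'s and the $L_K=1$ configurations are exactly where the paper needs its additional cases.
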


\begin{proof}
We shall prove this proposition by induction on the even integer $\sum_{i=1}^kl_i$ using the description of the fusion rules above and a recursion formula satisfied by the Tchebyshev polynomials, see \cite{Lem13}.
 
Let HR($\lambda$) be the following statement: $\pi(\chi_{\rho})(X)=\prod_{i=1}^kA_{l_i}(\sqrt{X})$ for any $\rho=b^{l_1}\alpha_1\dots \alpha_{k-1}b^{l_k}$ such that $2\le \sum_{i}l_i\le \lambda$.

To initialize the induction let us consider the irreducible corepresentations $b\alpha b\equiv r(\alpha)$, $\alpha\in \Irr(\mathbb{G})\setminus\{1_{\mathbb{G}}\}$. It is sent via $\pi$ onto $u^{\oplus d_{\alpha}}=d_{\alpha}1\oplus u^{(1)\oplus d_{\alpha}}$. Thus, in term of characters, we have $$\pi(\chi_{b\alpha b})(X)=d_{\alpha}(1+(X-1))=d_{\alpha}X=d_{\alpha}A_1(\sqrt{X})A_1(\sqrt{X}).$$

Consider now $b^2\equiv\omega(1_{\mathbb{G}})= r(1_{\mathbb{G}})\ominus 1$. It is sent by $\pi$ onto $v^{(1)}$. Thus $P_{b^2}(X)=\pi(\chi_{b^2})(X)=X-1=A_2(\sqrt{X})$. $HR(2)$ is then proved.


Now assume HR($\lambda$) holds: $$\pi(\chi_{\rho})(X)=\prod_{i=1}^{k-1} d_{\beta_i}\prod_{i=1}^kA_{l_i}(\sqrt{X})$$ for any $\rho=b^{l_1}\beta_1\dots \beta_{k-1}a^{l_k}$ such that $2\le \sum_{i}l_i\le \lambda$. We now show HR($\lambda+2$).

Let $\rho=b^{L_1}\alpha_1\dots b^{L_K}$, with $\sum_iL_i=\lambda+2$. In order to use HR($\lambda$), we must ``break'' $\rho$ using the fusion rules as in the examples above. 
Then, essentially, one has to distinguish the cases $L_K=1, L_K=3$ and $L_K\ge5$ (in the case $L_K\ge5$ we can ``break $\rho$ at $b^{L_{K}}$" but in the other cases we must use $b^{L_{K-1}}$ or $b^{L_{K-2}}$ if they exist, that is if there are enough factors $b^{L_i}$). So first, we deal with two special cases below, in order to have ``enough'' factors $b^L$ in $\rho$ in the sequel.

\begin{enumerate}

\item[-] If $K=1$ i.e. $L_K=\lambda+2$, write: $$\rho=b^{\lambda+2}=(b^{\lambda}\otimes b^2)\ominus (b^{\lambda-1}\otimes b)=(b^{\lambda}\otimes b^2)\ominus b^{\lambda}\ominus b^{\lambda-2}.$$ Then using the hypothesis of induction and \cite[Proposition 1.7]{Lem13}, we get 

\begin{align*}
\pi(\chi_{\rho})(X)&=A_{\lambda}A_2(\sqrt{X})-A_{\lambda}(\sqrt{X})-A_{\lambda-2}(\sqrt{X})\\
&=A_{\lambda}A_2(\sqrt{X})-(A_{\lambda}(\sqrt{X})+A_{\lambda-2}(\sqrt{X}))\\
&=A_{\lambda}A_2(\sqrt{X})-A_{\lambda-1}A_1(\sqrt{X})\\
&=A_{\lambda+2}(\sqrt{X}).
\end{align*}
(Notice that if $\lambda=2$ one has $\lambda-2=0$ and $b^4=(b^2\otimes b^2)\ominus (b\otimes b)=(b^2\otimes b^2)\ominus b^2\ominus 1$ so that the result we want to prove then is still true.)

\item[-] If $K=2, \alpha_1\not\simeq1_{\mathbb{G}}$, write $\rho=b^{L_1}\alpha b^{L_2}$. We have $L_1+L_2= \lambda+2\ge4$ and $L_1,L_2$ are odd hence $L_1$ or $L_2\ge3$, say $L_1\ge3$. Write $$b^{L_1}\alpha b^{L_2}=(b^2\otimes b^{L_1-2}\alpha b^{L_2})\ominus (b\otimes b^{L_1-3}\alpha b^{L_2}).$$

If $L_1=3$ then the tensor product $b\otimes b^{L_1-3}z^Jb^{L_2}$ is equal to $b\alpha b^{L_2}$ hence $\rho=b^3\alpha b^{L_2}$ satisfies 
\begin{align*}
\pi(\chi_{\rho})(X)&=d_{\alpha}A_2A_{1}A_{L_2}(\sqrt{X})-d_{\alpha}A_1A_{L_2}(\sqrt{X})\\
&=d_{\alpha}A_3(\sqrt{X})A_{L_2}(\sqrt{X}).
\end{align*}

If $L_1>3$ (i.e. $L_1\ge5$), then the tensor product $b\otimes b^{L_1-3}\alpha b^{L_2}$ is equal to $b^{L_1-2}\alpha b^{L_2}\oplus b^{L_1-4}\alpha b^{L_2}$. We get
\begin{align*}
\pi(\chi_{\rho})(X)&=d_{\alpha}A_2A_{L_1-2}A_{L_2}(\sqrt{X})-d_{\alpha}A_{L_1-2}A_{L_2}(\sqrt{X})-d_{\alpha}A_{L_1-4}A_{L_2}(\sqrt{X})\\
&=d_{\alpha}A_{L_1}(\sqrt{X})A_{L_2}(\sqrt{X}).
\end{align*}

\item[-] From now on, we suppose that there are more than three factors $b^{L_i}$ in $\rho$ i.e. $K\ge3$. We will have to distinguish three cases: $L_K=1, L_K=3$ and $L_K\ge5$.

If $5\le L_K<\sum_iL_i$, write $L_K=m_K+2$. Then we have $m_K\ge3$, so 
\begin{align*}
b^{L_1}\alpha_1\dots b^{L_K}&=b^{L_1}\alpha_{1}\dots b^{m_K+2}\\
&=(b^{L_1}\alpha_1\dots b^{m_K}\otimes b^2)\ominus (b^{L_1}\alpha_1\dots b^{m_K-1}\otimes b)\\
&=(b^{L_1}\alpha_1\dots b^{m_K}\otimes b^2)\ominus b^{L_1}\alpha_1\dots b^{m_K}\ominus b^{L_1}\alpha_1\dots b^{m_K-2}.
\end{align*}
Then 
\begin{align*}
\pi(\chi_{\rho})(X^2)&=\prod_{i=1}^{k-1}d_{\alpha_i}\left(A_{L_1}\dots A_{L_{K-1}}A_{m_k}A_2(X)-A_{L_1}\dots A_{m_K}(X)-A_{L_1}\dots A_{m_K-2}(X)\right)\\
&=\prod_{i=1}^{k-1}d_{\alpha_i}A_{L_1}\dots A_{L_{K-1}}A_{L_K}(X).\\
\end{align*}

If $m_K=1$, i.e. $L_K=3$, we proceed in the same way using
\begin{align*}
b^{L_1}\alpha_1\dots \alpha_{K-1}b^3=(b^{L_1}\alpha_{1}\dots b\otimes b^2) \ominus b^{L_1}\alpha_{1}\dots \alpha_{K-1}b.
\end{align*}

To conclude the induction, one has to deal with the case $L_K=1$. We have to distinguish the following cases:

If $L_{K-1}\ge4$. We have
\begin{align*}
b^{L_1}\alpha_{1}\dots b^{L_{K-1}}\alpha_{K-1}b&=(b^{L_1}\alpha_{1}\dots b^{L_{K-1}-1}\otimes b\alpha_{K-1}b) \ominus (b^{L_1}\alpha_{1}\dots b^{L_{K-1}-2}\otimes \alpha_{K-1}b)\\
&=(b^{L_1}\alpha_{1}\dots b^{L_{K-1}-1}\otimes b\alpha_{K-1}b) \ominus b^{L_1}\alpha_{1}\dots b^{L_{K-1}-2}\alpha_{K-1}b.
\end{align*}
Then
\begin{align*}
\pi(\chi_{\rho})(X)&=\prod_{i=1}^{k-1}d_{\alpha_i}\left(A_{L_1}\dots A_{L_{K-1}-1}A_{1}A_1(\sqrt{X})-A_{L_1}\dots A_{L_{K-1}-2}A_1(\sqrt{X})\right)\\
&=\prod_{i=1}^{k-1}d_{\alpha_i}A_{L_1}\dots A_{L_{K-1}}A_1(\sqrt{X}).\\
\end{align*}

If $L_{K-1}=2$ and $\alpha_{K-1}\simeq\bar\alpha_{K-2}$, we use 

\begin{align*}
b^{L_1}\alpha_{1}\dots &b^{L_{K-2}}\alpha_{K-2}b^2\alpha_{K-1}b\\
&=(b^{L_1}\alpha_{1}\dots b^{L_{K-2}}\alpha_{K-2}b\otimes b\alpha_{K-1}b)\ominus b^{L_1}\alpha_{1}\dots b^{L_{K-2}}(\alpha_{K-2}\otimes\alpha_{K-1})b\ominus b^{L_1}z^{J_1}\dots b^{L_{K-2}-1}.\\
\end{align*}
We write:
\begin{align*}
\alpha_{K-2}\otimes\alpha_{K-1}=1\oplus\bigoplus_{\gamma\not\simeq1_{\mathbb{G}}}\gamma,
\end{align*}
and we get
\begin{align*}
&\pi(\chi_{\rho})(X)=\prod_{i=1}^{K-3}d_{\alpha_i}A_{L_i}(\sqrt{X})[d_{\alpha_{K-2}}d_{\alpha_{K-1}}A_{L_{K-2}}A_1^3(\sqrt{X})-\\
&\ \ \ \ \ -\sum_{\gamma\not\simeq1}d_{\gamma}A_{L_{K-2}}A_1(\sqrt{X})-A_{L_{K-2}+1}-A_{L_{K-2}-1}(\sqrt{X})]\\
&=\prod_{i=1}^{K-3}d_{\alpha_i}A_{L_i}(\sqrt{X})[d_{\alpha_{K-2}}d_{\alpha_{K-1}}A_{L_{K-2}}A_1^3(\sqrt{X})-\\
&\ \ \ \ \ -\sum_{\gamma\not\simeq1}d_{\gamma}A_{L_{K-2}}A_1(\sqrt{X})-A_1A_{L_{K-2}}(\sqrt{X})]\\
&=\prod_{i=1}^{K-3}d_{\alpha_i}A_{L_i}(\sqrt{X})[d_{\alpha_{K-2}}d_{\alpha_{K-1}}A_{L_{K-2}}A_1^3(\sqrt{X})-\\
&\ \ \ \ \ -d_{\alpha_{K-2}}d_{\alpha_{K-1}}A_{L_{K-2}}A_1(\sqrt{X})]\\
&=\prod_{i=1}^{K-3}d_{\alpha_i}A_{L_i}(\sqrt{X})d_{\alpha_{K-2}}d_{\alpha_{K-1}}A_{L_{K-2}}A_2A_1(\sqrt{X}).
\end{align*}

The last case to deal with is $L_{K-1}=2$ and $\alpha_{K-1}\not\simeq\bar\alpha_{K-2}$, and again we can conclude thanks to
\begin{align*}
b^{L_1}\alpha_1\dots \alpha_{K-2}b^2\alpha_{K-1}b&=(b^{L_1}\dots b^{L_{K-2}}\alpha_{K-1}b\otimes b\alpha_{K-1}b) \ominus b^{L_1}\alpha_1\dots b^{L_{K-2}}(\alpha_{K-2}\otimes\alpha_{K-1})b.\\
\end{align*}
\end{enumerate}
\end{proof}

\begin{crl}(\/\cite[Corollary 2.2]{Lem13}\/)
Let $\rho$ be an irreducible corepresentation of $\mathbb{G}\wr_*S_N^+$ with $\rho=b^{l_1}\alpha_1\dots b^{l_k}$. Then $$\emph{dim}(\rho)=\prod_{i=1}^{k-1}d_{\alpha_i}\prod_{i=1}^kA_{l_i}(\sqrt{N}).$$
\end{crl}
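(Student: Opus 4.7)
The plan is to deduce this dimension formula from the previous proposition by simply evaluating $\pi(\chi_\rho)$ at an appropriate point. Recall that, on the generators of $C(\mathbb{G}\wr_*S_N^+)$,
\[
\epsilon_{\mathbb{G}\wr_*S_N^+}(v^{(i)}_{kl}u_{ij}) = \delta_{kl}\delta_{ij},
\]
while $\pi(v^{(i)}_{kl}u_{ij})=\delta_{kl}u_{ij}$ and $\epsilon_{S_N^+}(u_{ij}) = \delta_{ij}$. By universality, the counits of the two compact quantum groups are therefore related by $\epsilon_{\mathbb{G}\wr_*S_N^+} = \epsilon_{S_N^+}\circ \pi$. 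In particular, for any $\rho\in\Irr(\mathbb{G}\wr_*S_N^+)$,
\[
\dim(\rho) = \epsilon_{\mathbb{G}\wr_*S_N^+}(\chi_\rho) = \epsilon_{S_N^+}\bigl(\pi(\chi_\rho)\bigr).
\]

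Next, I would identify the value of $\epsilon_{S_N^+}$ on the central subalgebra $C(S_N^+)_0$ under the isomorphism $C(S_N^+)_0\simeq C([0,N])$ recalled above the proposition. Since the irreducible corepresentations of $S_N^+$ (of Kac type, with $N\ge 4$) have dimensions $\dim(u_k)=A_{2k}(\sqrt N)$, the character $\chi_k$ of $u_k$ evaluated at the counit equals $A_{2k}(\sqrt N)$, which is exactly the value at $X=N$ of the function $A_{2k}(\sqrt X)$ representing $\chi_k$. Hence $\epsilon_{S_N^+}$ coincides, on $C(S_N^+)_0$, with evaluation of the corresponding function at $X=N$.

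Putting these two observations together, I combine them with the formula $\pi(\chi_\rho)(X)=\prod_{i=1}^{k-1}d_{\alpha_i}\prod_{i=1}^k A_{l_i}(\sqrt X)$ from the proposition to obtain
\[
\dim(\rho) = \epsilon_{S_N^+}\bigl(\pi(\chi_\rho)\bigr) = \pi(\chi_\rho)\big|_{X=N} = \prod_{i=1}^{k-1}d_{\alpha_i}\prod_{i=1}^{k}A_{l_i}(\sqrt N),
\]
which is the desired formula. No step is really an obstacle: the only point that requires a moment of care is the identification of $\epsilon_{S_N^+}$ as evaluation at $X=N$ under the given isomorphism, which is a direct consequence of the known dimension formula for $\Irr(S_N^+)$ together with the explicit image $\chi_k\mapsto A_{2k}(\sqrt X)$.
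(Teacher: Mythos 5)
Your argument is correct and is exactly the intended deduction: the paper itself offers no separate proof of the corollary (it cites \cite[Corollary 2.2]{Lem13}), relying on the preceding proposition together with the standard facts that $\dim(\rho)=\epsilon(\chi_\rho)$, that $\epsilon_{\mathbb{G}\wr_*S_N^+}=\epsilon_{S_N^+}\circ\pi$, and that the counit corresponds to evaluation at $X=N$ under $C(S_N^+)_0\simeq C([0,N])$. Your write-up makes these steps explicit and matches that route, so there is nothing to correct.
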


\section*{Acknowledgements}
We are grateful to Pierre Fima for suggesting to us the monoidal equivalence argument, fundamental in this article. We want to thank Roland Vergnioux for the time he spent discussing the arguments of this article. The second author wishes to thank Philippe Biane for his suggestions on the present article. We wish to thank Roland Speicher and his team as well as Uwe Franz and Campus France (Egide) who made the joint work of the authors possible.

\bibliographystyle{alpha}
\nocite{*}
\bibliography{Wreathb}

\end{document}